\newcommand{\thmformat}{\sf}
\newif\ifEU
\EUtrue\usepackage[mathcal]{euscript}}
\newcommand{\DeltaNotation}[1]{
\ifthenelse{\equal{#1}{notation}}{\vspace{-48pt}}{}
\simplepicholder{1.6in}{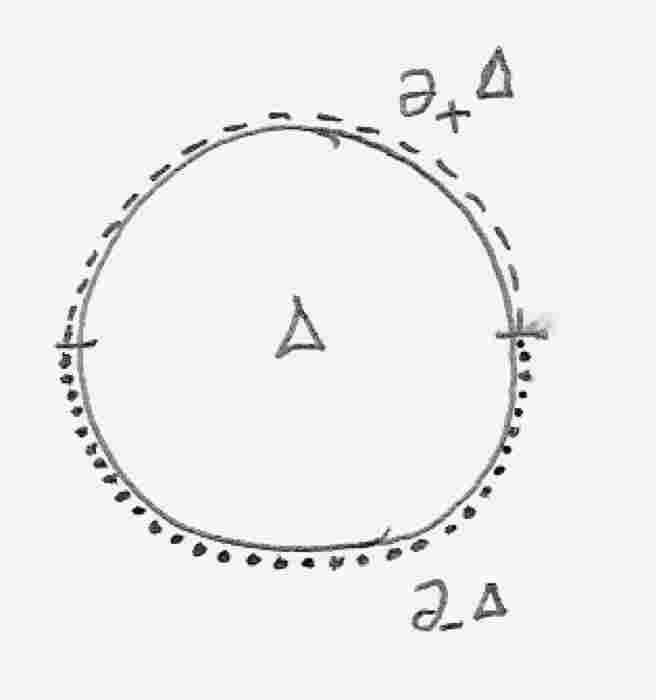}

\begin{tabular}{rl}
	$\partial_+\Delta$: & $y \ge 0$ on $\partial \Delta$ \\
	$\partial_+^o\Delta$: & $y > 0$ on $\partial \Delta$ \\
	$\partial_-\Delta$: & $y \le 0$ on $\partial \Delta$ \\
	$\partial_-^o\Delta$: & $y < 0$ on $\partial \Delta$
\end{tabular}

\caption[Arcs of the unit circle]{Arcs of the unit circle.}
\label{fig:#1:delta_notation}}
\newcommand{\thmLname}[1]{Theorem~\ref{#1}}
\newcommand{\lemLname}[1]{Lemma~\ref{#1}}
\newcommand{\lemMLname}[1]{Lemma~\ref{#1}}
\newcommand{\startevenpage}{%
\ifthenelse{\isodd{\arabic{page}}}{\clearpage}{}}
\newcommand{\figref}[2][]{\textbf{Figure~\mbox{\ref{#2}#1}}}
\newsavebox{\fminibox}
\newlength{\fminilength}
\newenvironment{fminipage}[1]%
  {\setlength{\fminilength}%
        {#1-2\fboxsep-2\fboxrule}%
   \begin{lrbox}{\fminibox}%
   \begin{minipage}{\fminilength}}
  {\end{minipage}\end{lrbox}%
   \noindent\fbox{\usebox{\fminibox}}}
   \newcommand{\Per}{\text{Per}}
   \newcommand{\kmax}{\kappa_{\text{max}}}
   \newcommand{\fmax}{f_{\text{max}}}
   \newcommand{\CH}{\text{ConvexHull}}
\newcommand{\titledcaption}[2][]{\caption[#1]{%
\setlength{\parskip}{0pt}%
\ifthenelse{\equal{#1}{}}{}{\textbf{\boldmath#1.} }%
#2}}
\newcommand{\mycaption}[2][]{\caption[#1]{\setlength{\parskip}{0pt}%
#2}}
\newenvironment{dummy}{}{}
\newcommand{\R}{\mathbb{R}}
\newcommand{\sN}{\mathcal{N}}
\newcommand{\sH}{\mathcal{H}}
\newcommand{\sL}{\mathcal{L}}
\newcommand{\sP}{\mathcal{P}}
\newcommand{\partl}[1]{\frac{\partial}{\partial{#1}}}
\newcommand{\operator}[1]{\mathrm{#1}\ }
\newcommand{\operatornosp}[1]{{\mathrm{#1}}}
\newcommand{\Tub}{\operatornosp{Tub}}
\newcommand{\im}{\operator{Im}}
\newcommand{\dom}{\operator{dom}}
\newcommand{\n}{{-1}}
\newcommand{\e}{\epsilon}
\newcommand{\picholder}[2]{\hfill\includegraphics[width=#1]{#2}\hfill{}}
\newcommand{\simplepicholder}[2]{\includegraphics[width=#1]{#2}}
\theoremstyle{plain}
\newtheorem{theoremI}{Theorem}[section]
\newtheorem{lemmaI}[theoremI]{Lemma}
\newtheorem{definitionI}[theoremI]{Definition}
\newtheorem{conjectureI}[theoremI]{Conjecture}
\newtheorem{claimI}[theoremI]{Claim}
\newtheorem{assumptionI}[theoremI]{Assumption}
\newenvironment{definition}{\begin{definitionI}\thmformat}%
{\end{definitionI}}
\newenvironment{claim}{\begin{claimI}\thmformat}%
{\end{claimI}}
\newenvironment{conjecture}{\begin{conjectureI}\thmformat}%
{\end{conjectureI}}
\newenvironment{lemma}{%
\begin{lemmaI}%
\thmformat%
}{\end{lemmaI}}
\newcounter{thm:main} 
\newcounter{cor:main}
\newenvironment{theorem}{%
\begin{theoremI}%
\thmformat%
}{\end{theoremI}}
\newenvironment{theorem*}{\noindent{\bf Theorem.}\em}{\par}
\newenvironment{claim*}{\noindent{\bf Claim.}\em}{\par}
\newenvironment{assumption*}{\noindent{\bf Assumption.}\em}{\par}
\newenvironment{lemma*}{\noindent {\bf Lemma.}\em}{\par}
\newenvironment{conjecture*}{\noindent {\bf Conjecture.}\em}{\par}
\newcommand{\st}{\ |\ }
\newcommand{\half}{\frac{1}{2}} %smaller
\newcommand{\dvol}{\operator{dvol}}
\newcommand{\innp}[2]{\left< #1, #2 \right>}
\newenvironment{mymatrix}{\left(\begin{matrix}}{\end{matrix}\right)}
\begin{document}

\title{Thread-wire surfaces: Near-wire minimizers and topological finiteness}

\author{Benjamin K. Stephens\thanks{The author was partly supported by grant DMS-0244991.  Contact: stephens@math.toronto.edu; www.bkstephens.net.}}
\maketitle

\begin{abstract}
	Alt's thread problem asks for least-area surfaces bounding a fixed ``wire'' curve and a movable ``thread'' curve of length $L$.  We conjecture that if the wire has finitely many maxima of curvature, then its Alt minimizers have finitely many surface components.  We show that this conjecture reduces to controlling near-wire minimizers, and thus begin a three paper series to understand them.  In this paper we show they arise, show that they are embedded, and show that they have a nice parametrization in wire exponential coordinates.  In doing so we prove tools of independent interest: a weighted isoperimetric inequality, a nonconvex enclosure theorem, and a classification of how Alt minimizers intersect planes.  The last item reduces to a question about harmonic functions in the spirit of Rad\'o's lemma.

\textbf{Based on referee comments this article has been split and included in two other papers: ``Existence of thread-wire surfaces, with quantitative estimate'' and ``Near-wire thread-wire minimizers: Lipschitz regularity and localization.''  They are available at http://www.bkstephens.net.  The content included in the new articles is essentially the same, though it is explained better.  Also a constant was changed in the statement of Theorem 1.2 in this paper, and the nonconvex enclosure statement is stated and proved more carefully. -BKS}

\end{abstract}

\newpage

\section{Introduction}

There are two natural ways to state a free boundary problem for the minimal surface equation.  One is the so-called ``thread problem'' $\sP(\Gamma,L)$ which asks for a surface of least area spanning a fixed ``wire'' curve $\Gamma$ in $\R^n$ and a free ``thread curve'' of constrained length $L$.  (This is made precise in Section \ref{sec:def}.)

H.W. Alt posed and showed existence \cite{alt} for a version of the thread problem where each surface is parametrized on disc(s).  Regularity was studied by J.C.C. Nitsche (\cite{nietreg1}, \cite{nietreg2}, \cite{nietreg3}), Alt \cite{alt}, G. Dziuk \cite{dziuk3}, and Dierkes-Hildebrandt-Lewy \cite{dhl}.  Versions of this problem were also studied in Geometric Measure Theory (K. Ecker, \cite{ecker}), and flat chains modulo 2 (R. Pilz \cite{pilz} ).

We study Alt minimizers in $\R^3$, and for smoother wires than Alt studied ($C^4$ instead of rectifiable, and generic in a specific sense).  In a typical minimizer for the Alt problem, the thread curve spends part of its length coinciding with the wire curve, and part of its length free of the wire, bounding surface components.  We call these surface components \emph{crescents} because of their typical shape.  Where the thread is free, its curvature vector has constant length (relating to the Lagrange multiplier for the problem), and lies in the tangent plane of the adjoining crescent.

Minimizers which lie within a small neighborhood of the wire are called \emph{near-wire minimizers.}  They are interesting for three reasons.  First, they are important to settling a finiteness conjecture (Conjecture \ref{conj:finiteness}).  Second they are guaranteed to arise when the thread length is near the wire length (Theorem \ref{thm:nearwire}).  Thirdly, we will show (in two later papers) that they have $C^1$ regularity up to corner points and the normal vector at the corner is forced by local wire geometry.  ($C^1$ up to corners is more regular than what had been shown in the three approaches; it improves on the $C^0$ regularity known for Alt minimizers).  We revisit our three points below, and thus outline this paper.

\subsection{Finiteness conjecture}
Alt's existence proof for the thread problem yields minimizers that may have, a priori, a countable infinity of crescents.  See \figref{fig:intro:ctrex}.  In experiment \cite{myurl} one sees that small crescents always contain a maxima of wire curvature where they meet the wire.  This motivates the following conjecture.

\begin{conjecture} \label{conj:finiteness} Let $\Gamma$ be a space curve which is $C^4$ and generic in the sense of Section \ref{sec:generic}.  Then any Alt minimizer $M$ for $\Gamma$ has at most $C(\Gamma)$ crescents, where $C(\Gamma)$ only depends on the $C^4$ data of $\Gamma$.
\end{conjecture}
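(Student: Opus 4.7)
The plan is to split the crescents of $M$ into two classes according to a scale $\epsilon = \epsilon(\Gamma) > 0$: those whose maximal distance from $\Gamma$ is at least $\epsilon$, and those contained entirely in the $\epsilon$-tubular neighborhood of $\Gamma$ (the near-wire crescents). The total area of $M$ is bounded above uniformly in terms of the wire and $L$ (for instance by the area of any fixed disc-type competitor), so if I can show each large crescent contributes a definite amount of area $c(\epsilon)>0$ -- for instance via monotonicity applied at an interior point of distance $\epsilon$ from $\Gamma$, or via the weighted isoperimetric inequality promised in the introduction applied to the thread boundary -- then large crescents are finite in number. This reduces the conjecture to bounding the near-wire crescents, as the paper asserts.

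For the near-wire case, I would use the regularity and parametrization results announced for the later papers in this series: every near-wire crescent admits a nice parametrization in wire exponential coordinates, and its two thread-endpoints meet $\Gamma$ with a tangent direction forced by local wire geometry. Combining this with the Lagrange-multiplier condition that the thread has constant geodesic curvature in the tangent plane of the adjoining crescent, I would argue that in order for a thin strip of minimal surface bounded by a short arc of $\Gamma$ and a short thread-arc of fixed curvature to close up, the wire curvature on that arc must exceed the curvature at the adjacent wire points on either side. Thus each near-wire crescent would be pinned to a local maximum of $\kappa_\Gamma \colon \Gamma \to \R$, and genericity would ensure that $\kappa_\Gamma$ has only finitely many critical points, giving the desired bound.

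The main obstacle is the second step: making rigorous the experimental observation that a small near-wire crescent can exist only over an arc containing a local maximum of $\kappa_\Gamma$. This requires a quantitative first-variation analysis at the two corner endpoints together with a global closing-up condition on the thread arc; both inputs rely crucially on the parametrization and embeddedness results the paper is building towards, as well as on the nonconvex enclosure theorem used to confine the surface within the normal bundle of $\Gamma$. A further subtlety arises when a crescent straddles two nearby critical points of $\kappa_\Gamma$ or when the corner angle vanishes, and the genericity hypothesis (Morse-type nondegeneracy of $\kappa_\Gamma$, transversality of the Lagrange multiplier level to critical values, and the conditions of Section \ref{sec:generic}) should be invoked precisely to rule out infinite families of such degenerate configurations and extract a bound depending only on the $C^4$ data of $\Gamma$.
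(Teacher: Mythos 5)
This is stated as a conjecture, not a theorem, so there is no proof in the paper to match against; what the paper offers (in the paragraph following the conjecture) is a \emph{reduction} of the conjecture to the claim that small near-wire crescents must straddle a maximum of $\kappa_\Gamma$. Your proposal is the same reduction at heart, and you correctly identify the key gap — that the ``pinning to a curvature maximum'' step requires the regularity and localization machinery of the later papers in the series. In that sense you are faithful to what this paper actually does.

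The one place your route genuinely differs is in how you dispose of the ``large'' crescents. The paper splits crescents by supporting wire length $W$: the set $E_2$ of crescents with supporting wire length $> W$ is bounded simply by $|E_2| \le \ell(\Gamma)/W$, because supporting wire arcs are pairwise non-overlapping intervals inside $[0,\ell(\Gamma)]$. That is purely combinatorial and needs no geometry of the minimizer. You instead split by distance from $\Gamma$ and propose to bound the count of far-from-wire crescents by showing each has area $\ge c(\epsilon)$ via interior monotonicity or the isoperimetric inequality. This works morally, but is heavier, and carries a subtlety: an interior point of a large crescent at distance $\epsilon$ from $\Gamma$ need not be far from the \emph{free thread}, so interior monotonicity at a fixed scale does not immediately give a uniform area lower bound without some boundary-monotonicity or confinement control on the thread. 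The paper sidesteps this entirely. Moreover, the paper uses the convex hull theorem (Theorem \ref{thm:chull}) precisely to convert its supporting-wire-length split into a distance-to-wire split in one direction only (short wire arc $\Rightarrow$ near-wire crescent), which is exactly what is needed and is cleaner than your two-sided criterion. Finally, note that genericity in Section \ref{sec:generic} gives you that $\kappa_\Gamma$ is Morse, hence has finitely many critical points — your reliance on that is correct and matches the paper's intent — but neither you nor the paper supply the crucial unproved step, and both therefore stop at the same place, which is appropriate for a conjecture.
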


Let $W>0$ be a fixed length.  Consider an Alt minimizer $M$.  Divide the crescents into a set $E_1$ with supporting wire length at most $W$ and a set $E_2$ with supporting wire length more than $W$.  By picking $W$ smaller and smaller relative to the geometry of $\Gamma$, we can by a convex hull result (Theorem \ref{thm:chull}) conclude that the crescents in $E_1$ lies in a $R(W)$ tubular neighborhood of $\Gamma$ with $R(W)$ arbitrarily small.  If we can show small crescents near to $\Gamma$ straddle maxima of wire curvature, then we can get

\begin{wrapfigure}[15]{o}{0pt}
	\picholder{2in}{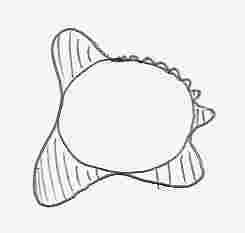}
	\mycaption[Non-generic wire with infinite-component Alt minimizer]{Non-generic wire with infinite-component Alt minimizer.}
	\label{fig:intro:ctrex}	
\end{wrapfigure}

$$|E_1| \le \#\text{ of maxima of }\kappa_\Gamma.$$
On the other hand we always have
$$|E_2| \le \ell(\Gamma)/W.$$
By choosing $W$ correctly relative to the geometry of $\Gamma$, we would prove the conjecture.

\subsection{Near-wire minimizers arise}
It is plausible from both thought experiments and physical ones that if the thread length is near the wire length, any minimizer lies near the wire.  This is nontrivial to prove, however, because it is not a perturbation statement.  When the thread length is near the wire length, it is still quite large, and competitors may range far from the wire.

\begin{theorem}\label{thm:nearwire} Let $\Gamma:[0,\ell(\Gamma)] \to \R^3$ be a $C^3$ non-self-intersecting wire curve parametrized by arclength.
Let $\lambda >0$ be small relative to the $C^3$ data of $\Gamma$.  There is a constant $R(\Gamma,\lambda)>0$ so if $M$ is a minimizer for the thread problem $\sP(\Gamma,\ell(\Gamma)-\lambda)$ 
then $M$ lies in a small radius $R$ normal neighborhood of $\Gamma$:
\begin{eqnarray*}
R(\Gamma,\lambda) &\le& (\pi \kmax/2)^{-1/2}\lambda^{1/2} + o(\lambda^{1/2})\\
\text{Area}(M) &\le& \kmax^\n \lambda + o(\lambda).
\end{eqnarray*}
Here $\kmax = \max |\Gamma''|$ and the error terms do not depend on $M$.
\end{theorem}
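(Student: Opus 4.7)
The plan is to prove the two bounds in tandem: first upper-bound $\text{Area}(M)$ via an explicit competitor, then convert the area budget into a tube radius via the boundary monotonicity formula for minimal surfaces.

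\emph{Area bound via competitor.} Choose $s_0$ with $|\Gamma''(s_0)| = \kmax$ and work in the Frenet frame at $p_0 = \Gamma(s_0)$. For small parameters $a > 0$ and $0 < \mu < \kmax$, form a competitor by replacing the wire arc $\Gamma|_{[s_0-a,\,s_0+a]}$ with a circular arc $T_0$ of curvature $\mu$ in the osculating plane that shares the endpoints and bulges toward the chord; span the resulting closed curve by the planar lune between the two arcs, and outside $[s_0-a,s_0+a]$ let the thread coincide with the wire. A Frenet Taylor expansion gives thread-length defect $\delta = \tfrac{1}{3}(\kmax^2-\mu^2)a^3 + O(a^5)$ and lune area $\tfrac{2}{3}(\kmax-\mu)a^3 + O(a^5)$. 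Imposing $\delta = \lambda$ and eliminating $a^3$ gives competitor area $\frac{2\lambda}{\kmax+\mu} + o(\lambda)$; choosing $\mu(\lambda) \to \kmax$ at a calibrated rate yields the bound $\kmax^{\n}\lambda + o(\lambda)$, which by minimality transfers to $\text{Area}(M)$. Note that the naive straight-chord competitor ($\mu = 0$) would miss the sharp constant by a factor of $2$, so the near-wire circular-arc thread is essential for the sharp constant.

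\emph{Tube radius via monotonicity.} Boundary monotonicity for a minimal surface meeting its boundary smoothly gives $|M \cap B_r(p)| \ge \tfrac{\pi}{2} r^2$ for any $p \in M$ and $r \le \dist(p,\partial M \setminus \{p\})$, hence $\dist(p,\partial M) \le \sqrt{2\,\text{Area}(M)/\pi}$ for every $p \in M$. Since $\partial M \subset \Gamma \cup T$, it remains to localize $T$: each free-thread arc has length $O(\lambda^{1/3})$ by the same chord-defect calculation, and then sits within $O(\lambda^{2/3})$ of $\Gamma$ by the elementary chord-to-arc height bound $h \lesssim \kmax c^2$. Combining, $\dist(p,\Gamma) \le \sqrt{2\,\text{Area}(M)/\pi} + o(\lambda^{1/2}) \le (\pi \kmax/2)^{-1/2}\lambda^{1/2} + o(\lambda^{1/2})$.

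\emph{Main obstacle.} The hard part is the implicit assumption that $T$ (hence $M$) is already near $\Gamma$. This is not a perturbative regime --- the thread length $\ell(\Gamma)-\lambda$ is still comparable to $\ell(\Gamma)$, and a priori $M$ could have crescents far from the wire with large supporting arclength. The proof must use Theorem~\ref{thm:chull} to place each crescent in a tube whose radius shrinks with its supporting arclength, then combine this with the area budget to force every supporting arclength to be $O(\lambda^{1/3})$ and hence every crescent to be small. Obtaining the sharp constant $(\pi \kmax/2)^{-1/2}$, rather than merely the correct $\lambda^{1/2}$ scaling, is what necessitates the weighted isoperimetric inequality and the nonconvex enclosure theorem advertised in the abstract.
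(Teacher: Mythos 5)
Your competitor is the more careful one: a direct Frenet computation shows the parallel-offset thread in the paper's construction has arclength defect $\tfrac{1}{3}\kmax^2 w^3$, not $\tfrac{2}{3}\kmax^2 w^3$, so that competitor only yields $\mathrm{Area}(M)\le 2\kmax^{\n}\lambda+o(\lambda)$; your circular-arc lune with $\mu\uparrow\kmax$ at a calibrated rate is needed to hit $\kmax^{\n}\lambda$ (the abstract's remark about a changed constant in the statement corroborates that something was off). Your proposed tube step is also a genuinely different route: the paper never invokes varifold monotonicity, but instead projects the crescent preimages into the normal strip $T_R$ under wire exponential coordinates, applies the weighted isoperimetric inequality (Lemma~\ref{lem:wtd_iso}) there, and then uses the co-area enclosure bound (Lemma~\ref{lem:nearwire}), all at the level of the conformal parametrization and the area formula.

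The gap sits exactly at the step you label the main obstacle, and neither patch you float closes it. (a) As written, $\dist(p,\partial M)\le\sqrt{2\,\mathrm{Area}(M)/\pi}$ only puts $p$ near \emph{some} boundary point, which for $p$ on the thread may be another far-away thread point; you would need to set $\rho=\dist(p,\Gamma)$ and run the density bound in $B_\rho(p)$, which meets no wire. Even then, a free-boundary monotonicity formula along the thread requires varifold stationarity of the Alt minimizer at the thread, control of the even-order branch points allowed by Dierkes--Hildebrandt--Lewy, and an a priori bound on the Lagrange multiplier $\kappa$ as $\lambda\to0$ --- none of which is in hand. (b) The fallback via Theorem~\ref{thm:chull} is circular: the only length information is $\sum_i\bigl(W_i-\ell(T_i)\bigr)=\lambda$ with $\ell(T_i)\ge|\Gamma(a_i)-\Gamma(b_i)|$, and the chord--defect estimate that would yield $W_i=O(\lambda^{1/3})$ is itself a near-wire estimate. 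For a wire that doubles back, a single crescent can have supporting arclength $W_i$ comparable to $\ell(\Gamma)$ while keeping defect $O(\lambda)$, so the convex-hull theorem gives nothing for it. The strip projection plus weighted isoperimetry is precisely the paper's replacement for this missing step; note also that the role of the weight $1-4y\kmax$ there is to absorb the conformal factor of the tube metric so that wires of large total curvature can be handled, not primarily to sharpen the constant.
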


We give the proof in Section \ref{sec:nearwire}; it depends on a weighted isoperimetric inequality (Lemma \ref{lem:wtd_iso}) and a nonconvex confinement result for small-area minimal surfaces (Lemma \ref{lem:nearwire}).

\subsection{Nice near-wire parametrization}
A priori, a near-wire minimizer could be a very complicated beast, with crescents intersecting themselves in swirling surfaces which could range far up and down the tubular enclosure, far from their supporting wires.  They could also have branch points.  See \figref{fig:slope:wildbeast}.  The next lemma tames such potential behavior.  It is the first step toward the improved regularity that we show in the subsequent papers.
\begin{figure}
	\picholder{4in}{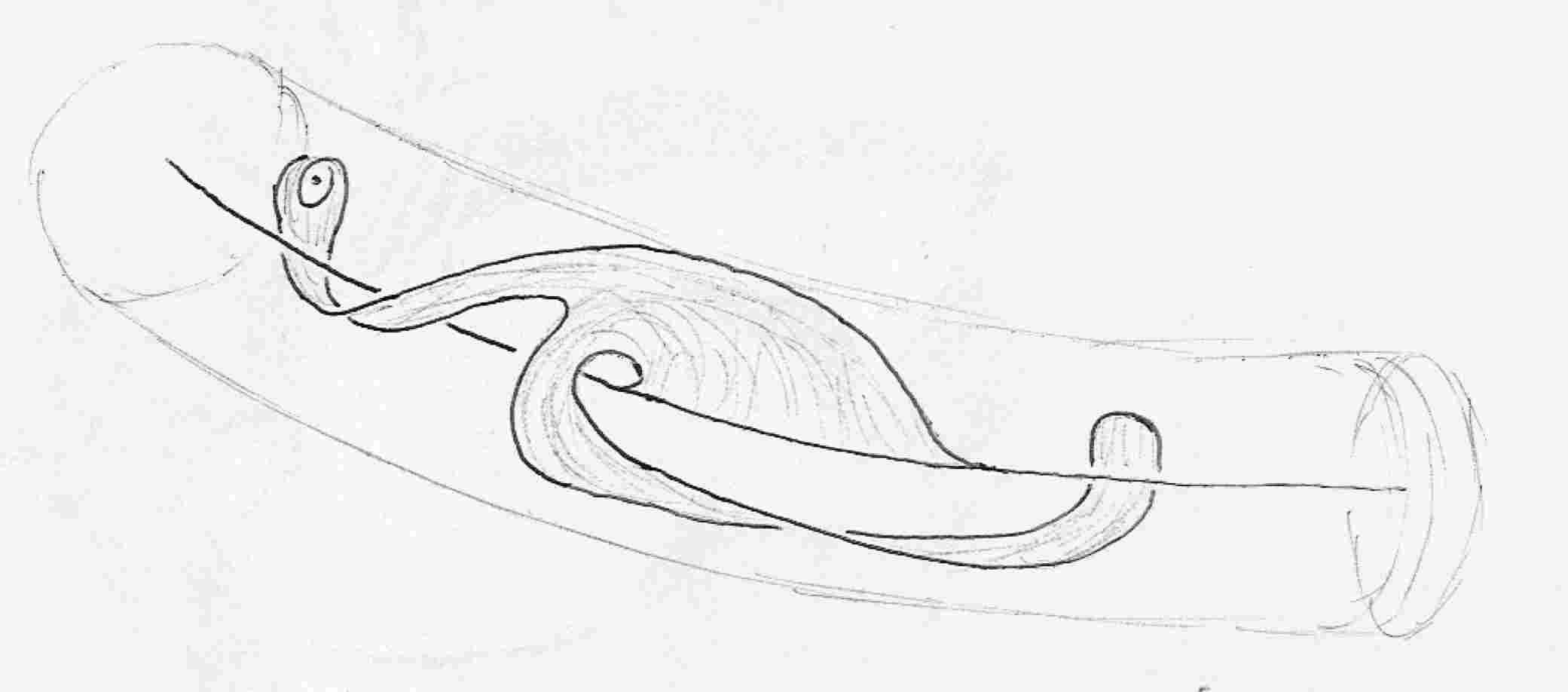}
	\titledcaption[Near-wire Alt minimizers could be quite wild, a priori]{A near-wire Alt minimizer could intersect itself, range far up and down the wire's tubular enclosure, and have branch points.}
	\label{fig:slope:wildbeast}
\end{figure}
\begin{figure}
	\picholder{3.7in}{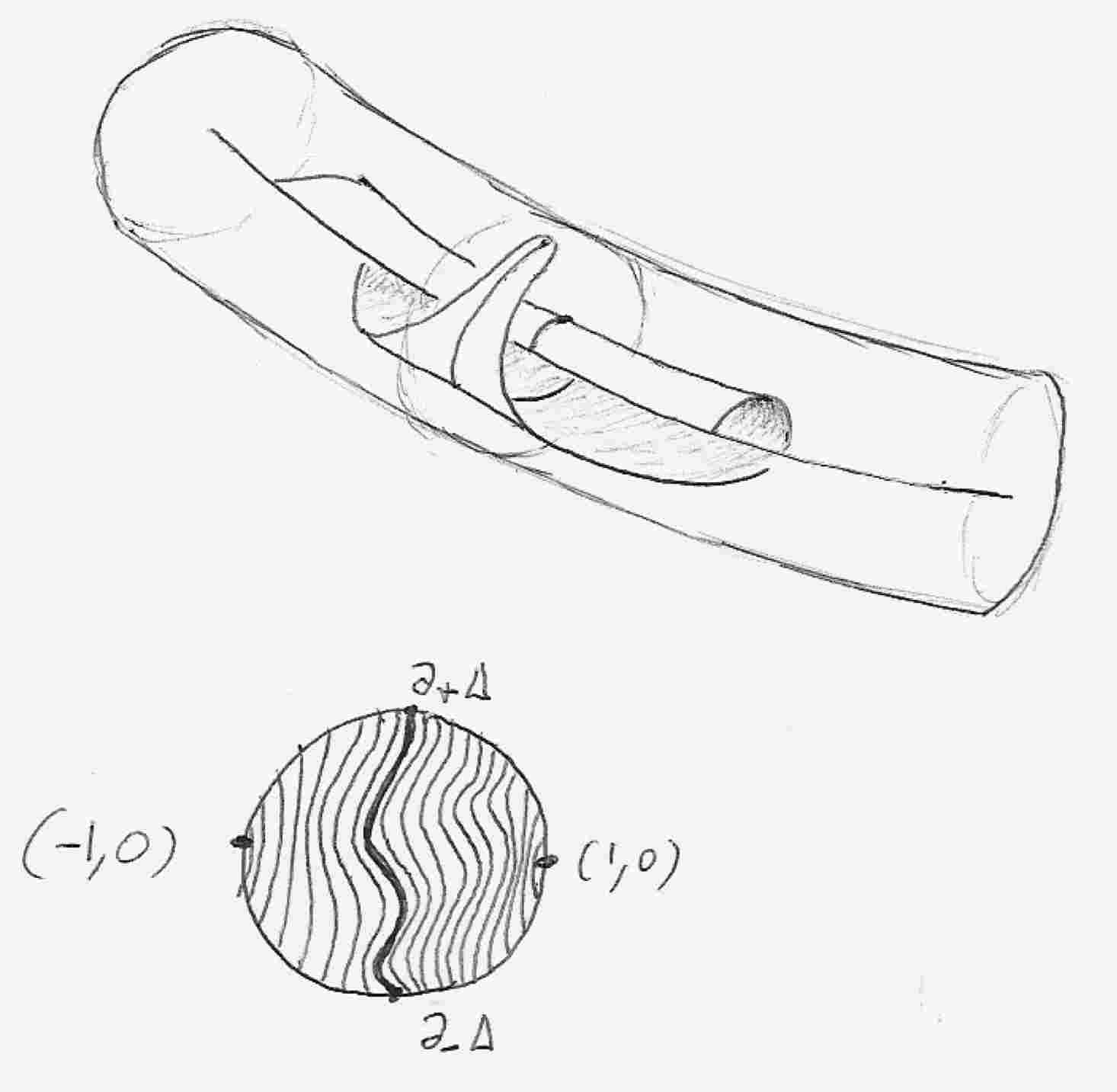}
	\titledcaption{Here we see that the Alt minimizer only lies in slices of the tubular neighborhood which pass through its supporting wire.  Specifically, this crescent is parametrized on the unit disc, with the half-boundaries $\partial_{\pm} \Delta$ mapping to the wire and thread, we may pull back normal discs of the tubular neighborhood to a disjoint family of curves on the disc.}
	\label{fig:slope:slicefriendly}
\end{figure}

\begin{theorem}\label{thm:slope:slicewise}
Let $M$ be a minimizer which is near-wire in the sense that it lies in the largest $R$-tubular neighborhood of $\Gamma$ which does not self-intersect.  Then each crescent of $M$ can be split into continuous curves which correspond bijectively with the slices of the tubular neighborhood corresponding to the supporting wire.  See \figref{fig:slope:slicefriendly}.
\end{theorem}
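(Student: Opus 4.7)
The plan is to pull back the normal-disc slices of the tubular neighborhood via a conformal parametrization and identify them as the level sets of a harmonic function on the unit disc. Fix a crescent $C$ of $M$ and conformally parametrize it by $f:\Delta\to\R^3$, with $f|_{\partial_+\Delta}$ covering a supporting wire arc $\Gamma|_{[a,b]}$ and $f|_{\partial_-\Delta}$ the adjoining thread arc. For $t\in[a,b]$ let $T(t)=\Gamma'(t)$, let $P_t$ be the affine plane through $\Gamma(t)$ normal to $T(t)$, and let $D_t=P_t\cap \Tub_R(\Gamma)$ be the corresponding normal disc. Embeddedness of $\Tub_R(\Gamma)$ makes the $D_t$ disjoint, so it suffices to show that $f^{-1}(D_t)$ is a single continuous arc joining $f^{-1}(\Gamma(t))\in\partial_+\Delta$ to a unique point of $\partial_-\Delta$. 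Writing $h_t(z)=\langle f(z)-\Gamma(t),T(t)\rangle$, observe that $h_t$ is harmonic on $\Delta$ (coordinates of a conformal minimal immersion are harmonic) and $f^{-1}(P_t)=h_t^{-1}(0)$.

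The classification of zero sets of harmonic functions on the disc (the Rad\'o-style lemma alluded to in the abstract) asserts that $h_t^{-1}(0)$ is a locally finite embedded graph whose vertices are either interior critical zeros (of even valence $\ge 4$) or boundary sign-change points (of valence $1$), whose edges are smooth arcs, and which contains no closed loops (a closed loop would force $h_t\equiv 0$ by unique continuation). A leaf-counting identity for trees, $L=2+\sum_v(\deg v - 2)$, then shows that if $h_t$ has exactly two boundary sign changes, its zero set is a single simple arc joining them.

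The proof thus reduces to verifying that $h_t$ changes sign exactly once on each of $\partial_+\Delta$ and $\partial_-\Delta$. On $\partial_+\Delta$, $h_t$ restricts to $s\mapsto\langle\Gamma(s)-\Gamma(t),T(t)\rangle$, whose derivative at $s=t$ equals $1$ and remains positive on the supporting arc provided $[a,b]$ is short relative to $\kmax^{\n}$; one subdivides the wire if necessary. On $\partial_-\Delta$ the analogous statement is that the thread meets $P_t$ in exactly one point, i.e.\ the projection $\pi$ of the thread onto $\Gamma$ is a monotone bijection onto $[a,b]$. This is the main obstacle: the thread is a priori only $C^0$ and could fold back inside $\Tub_R(\Gamma)$. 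I would rule this out by combining near-wire confinement (Theorem~\ref{thm:nearwire}) with the nonconvex enclosure result (Lemma~\ref{lem:nearwire}): a fold in the thread would, together with a wire subarc, bound a small region of the tube into which the enclosure result would force a piece of crescent, contradicting either its area lower bound or the constant-length curvature-vector condition on the thread, which pins its tangent close to the wire tangent and keeps $(\pi\circ\gamma)'$ bounded away from zero. With monotonicity in hand, varying $t$ foliates $\Delta$ by disjoint arcs in bijection with the slices over $[a,b]$, as claimed.
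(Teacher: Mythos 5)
You have the right framework (pull back the normal-disc slices and study the zero sets of the harmonic functions $h_t$), and you correctly identify the heart of the matter: you must rule out the level set meeting the free boundary twice. But your proposed resolution of this key step is both vague and different from what the paper actually needs, and I don't think it works as sketched. You say a thread fold would force area/length contradictions via \lemLname{lem:nearwire} and the curvature condition, which you claim ``pins its tangent close to the wire tangent.'' This inference isn't justified: the constant free-thread curvature $\kappa$ is a Lagrange multiplier and by itself says nothing about the direction of the thread tangent relative to $\Gamma'$. More seriously, trying to first establish that the thread is graphical over $\Gamma$ and only then run the level-set classification inverts the logic of the paper's proof.

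The paper's argument does \emph{not} need an independent monotonicity statement for the thread. Instead, the impossibility of a level-set arc with both ends on the thread boundary is proved directly inside the level-set machinery (Lemma \ref{lem:harm_level_set:nocycle}$(ii)$, verified for Alt minimizers in the proof of the plane--crescent lemma): if $\gamma\subset\{h_t=0\}\cap\Delta^o$ had both endpoints on the thread arc, then $\gamma$ together with an arc of the thread would enclose a subregion $U$ of $\Delta$, and the convex-hull theorem applied to the sub-surface $X|_U$ (whose entire boundary image lies in the plane $F=a$ together with a thread arc of nonnegative curvature) forces $X|_U$ to be planar; analytic continuation then makes the whole crescent planar, a contradiction. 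That is a fundamentally different mechanism from ``fold $\Rightarrow$ trapped area $\Rightarrow$ area contradiction,'' and it is what makes the structure a tree with at most one node on the free boundary. Two smaller issues: you have $\partial_+\Delta$ and $\partial_-\Delta$ swapped relative to the paper's convention (the minus arc maps to the wire); and you silently ignore the degenerate cases the paper's Lemma \ref{lem:plane_cresc:intg} must handle (the $\kappa=0$ case, ruled out by embeddedness of $\Gamma$, and the planar-wire case handled separately via the convex-hull theorem).
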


The main ingredient in this proof is a classification of how Alt minimizers may intersect planes (Section \ref{sec:plane_cresc}).  The result is obtained by studing level sets of harmonic functions (Section \ref{sec:harm_level_set}), in analogy with Classical minimal surfaces and Rad\'o's lemma.

The author would like to thank his thesis advisor, David S. Jerison.
\section{Definitions}\label{sec:def}
\begin{wrapfigure}[20]{r}{1.8in}
\simplepicholder{1.6in}{twsf_delta_disc.jpg}

\begin{tabular}{rl}
	$\partial_+\Delta$: & $y \ge 0$ on $\partial \Delta$ \\
	$\partial_+^o\Delta$: & $y > 0$ on $\partial \Delta$ \\
	$\partial_-\Delta$: & $y \le 0$ on $\partial \Delta$ \\
	$\partial_-^o\Delta$: & $y < 0$ on $\partial \Delta$
\end{tabular}

\caption[Arcs of the unit circle]{Arcs of the unit circle.}
\label{fig:def:delta_notation}
\end{wrapfigure}

Let $\Gamma:[0,\ell(\Gamma)] \to \R^3$ an embedded $C^1$ curve parametrized by arclength and let $L$ be a constant with
\begin{equation}\label{eq:goodLrange}
	|\Gamma(1)-\Gamma(0)| < L < \ell(\Gamma).
\end{equation}
Below we define competitors and an objective function for the \emph{length $L$ thread problem for wire $\Gamma$}; we abbreviate this problem $\sP(\Gamma,L)$.  Generally speaking, Alt used the approach of Rad\'o --- minimize Dirichlet energy in order to get area-minimizing surfaces which, in addition, have a nice parametrization.  (See lemma \ref{lem:intro:morrey} below.)

Each Alt competitor will be a surface obtained by attaching discs to non-overlapping intervals on the wire $\Gamma$.  Let $\Delta$ be the closed unit disc and adopt the notation of \figref{fig:def:delta_notation}.  We define a \emph{thread-wire disc} on $\Gamma$ to be a pair $(X,\phi_-)$ consisting of a map $X \in H^1(\Delta^o,\R^3) \cap C^0(\Delta,\R^3)$ and a continuous map $\phi_-:\partial_- \Delta \to [0,\ell(\Gamma)]$ attaching the disc to the wire:
$$X(p) = \Gamma(\phi_-(p)) \qquad \text{for p }\in \partial_- \Delta.$$
Here $H^1(\Delta^o,\R^3)$ means the space of functions $X:\Delta \to \R^3$ with finite Dirichlet energy:
$$
D(X) := \half \int_{\Delta^o} |X_x|^2 + |X_y|^2 \, dx \, dy.
$$
We assume that $\phi_-$ is weakly monotonic in the sense that $\phi_-(e^{i\theta})$ is non-decreasing for $-\pi \le \theta \le 0.$
We say that two thread-wire discs $(X,\phi_-), (Y,\psi_-)$ are \emph{non-overlapping} if $\im \phi_-$ and $\im \psi_-$ have disjoint interiors.

An \emph{Alt competitor} for $\sP(\Gamma,L)$ is a countable collection $M$ of pairwise non-overlapping thread-wire discs on $\Gamma$ satisfying a length condition:
$$\ell(M) := 
\ell(\Gamma) + \sum_{(X,\phi_-) \in M} (\ell(X|_{\partial_+ \Delta})-|\im \phi_-|) \le L.
$$
The objective function that Alt minimizes is the total Dirichlet energy:
$$D(M) := \sum_{(X,\phi_-) \in M} D(X).$$
We may now state Alt's existence result.  (See \cite{alt}, \cite{dhkw2}.)
\begin{theorem}[Alt, 1973] Let $\Gamma$ be a rectifiable non-self intersecting curve.  Let $L$ satisfy \eqref{eq:goodLrange}.  Then there is an Alt competitor $M^*$ for the Alt problem $\sP(\Gamma,L)$ attaining the infinum of the Dirichlet energy $D$ over all Alt competitors for this problem.  Each crescent of $M$ is harmonic and conformal:
\begin{eqnarray*}
	\Delta X &=& 0\\
	\innp{X_x}{X_y} &=& 0\\
	|X_x| &=& |X_y|.
\end{eqnarray*}
for every $(X,\phi_-) \in M$.  Finally, $M$ uses all the thread length permitted:
\begin{equation}
	\ell(M) = L.
\end{equation}
\end{theorem}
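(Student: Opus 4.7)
The plan is the direct method of the calculus of variations. Given a minimizing sequence $\{M_n\}$ with $D(M_n) \to \inf D$, the uniform bound on total Dirichlet energy implies that for any $\varepsilon > 0$ only finitely many crescents per $M_n$ carry energy exceeding $\varepsilon$. Enumerating each $M_n = \{(X_n^i,\phi_n^{i,-})\}_{i \ge 1}$ in decreasing order of crescent energy, I would extract a diagonal subsequence with convergence at every index $i$. On each disc I exploit the conformal invariance of the Dirichlet integral to impose a three-point normalization on $\partial \Delta$, preventing the parametrization from degenerating. The Courant--Lebesgue lemma then supplies equicontinuity of the boundary maps $X_n^i|_{\partial\Delta}$, so Arzel\`a--Ascoli yields continuous boundary limits; harmonic replacement on $\Delta$ preserves boundary data and weakly decreases $D$, so I may assume every $X_n^i$ is harmonic and use standard estimates to obtain weak $H^1$ and uniform interior convergence to a limit $X^i$.

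The main obstacle is to verify that the putative limit $M^* = \{(X^i,\phi^{i,-})\}$, obtained after discarding indices where $D(X_n^i) \to 0$, is a valid Alt competitor. Non-overlap of the wire intervals $\im \phi^{i,-}$ passes to the limit using weak monotonicity of the $\phi_n^{i,-}$, and the length inequality $\ell(M^*) \le L$ follows from lower semicontinuity of arclength applied to the uniform boundary convergence, summed carefully over the (countable) family with an $\varepsilon$-tail estimate. Weak lower semicontinuity of $D$ then gives $D(M^*) \le \liminf D(M_n) = \inf D$, so $M^*$ attains the infimum.

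Harmonicity and conformality of each crescent $X$ follow from standard variations on a single disc. For harmonicity, the harmonic extension $\tilde X$ of $X|_{\partial\Delta}$ is an admissible crescent with the same boundary data (hence the same $\phi_-$, thread, and length contribution) but $D(\tilde X) \le D(X)$, so minimality forces $X = \tilde X$. For conformality, given any smooth vector field $V$ on $\Delta$ tangent to $\partial\Delta$, the inner variations $X_t = X \circ (\id + tV)$ preserve all competitor constraints, so $\frac{d}{dt}|_{t=0} D(X_t) = 0$ forces the Hopf differential $(|X_x|^2 - |X_y|^2) - 2i\langle X_x, X_y\rangle$ of $X$ to be holomorphic and, by the boundary conditions, identically zero. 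Finally, for length saturation, if $\ell(M^*) < L$ with slack $\delta > 0$, I would pick any crescent $(X,\phi_-)$ and pull its thread slightly toward the supporting wire arc $\Gamma \circ \phi_-$ via straight-line homotopy in $\R^3$, harmonically extending into $\Delta$; this strictly decreases the crescent's area and Dirichlet energy while continuously increasing thread length, and for a small enough perturbation the slack $\delta$ absorbs the length increase, contradicting minimality.
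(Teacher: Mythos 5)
The paper does not prove this theorem; it is Alt's 1973 existence result, quoted with the citations \cite{alt} and \cite{dhkw2}. There is no ``paper's own proof'' to compare against, so I will instead assess your sketch on its merits.

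Your overall plan — direct method, Courant--Lebesgue for boundary equicontinuity, harmonic replacement, lower semicontinuity, inner variations for conformality — is in the right family, and captures the Douglas--Rad\'o style approach Alt adapted. But there are real gaps. (1) The ``three-point normalization'' does not transfer cleanly from Plateau's problem: here the boundary is split into a constrained arc $\partial_-\Delta$ mapping monotonically into $\Gamma$ and a genuinely free arc $\partial_+\Delta$ whose image is not constrained to any prescribed curve. Fixing the corner points $(\pm 1,0)$ already exhausts two degrees of the M\"obius group, and the argument that Courant--Lebesgue controls the free-thread boundary values (which have no target curve to land on) needs a substitute for the compactness one gets from a Jordan boundary; the length bound on $X|_{\partial_+\Delta}$ is the ingredient that replaces this, and you cannot get equicontinuity of $X_n^i|_{\partial_+\Delta}$ from energy bounds alone. (2) The conformality argument via the Hopf differential needs to account for the mixed boundary conditions: a Dirichlet-type (curve) condition on $\partial_-\Delta$, a free condition plus a length constraint on $\partial_+\Delta$, and the corner points. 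Verifying that inner variations preserve admissibility and still force the Hopf differential to vanish requires more than citing the Plateau argument. (3) The length-saturation step is the most problematic. You claim that a straight-line homotopy pulling the free thread toward the supporting wire arc ``strictly decreases area and Dirichlet energy while continuously increasing thread length.'' Generically, moving a curve toward a nearby curve inside a convex region shortens it, so the thread length in the competitor functional $\ell(M)=\ell(\Gamma)+\sum(\ell(X|_{\partial_+\Delta})-|\im\phi_-|)$ would typically decrease, not increase — and you have not argued that the harmonic extension of the perturbed boundary data has smaller Dirichlet energy. The usual route is via the Lagrange multiplier: slack in the constraint forces $\kappa=0$ along the free thread, and one then argues that a minimizer with a straight (geodesic) free thread and positive energy can be beaten; or, more carefully, one builds a competitor with strictly smaller $D$ by a scaling or excision argument. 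As stated, this step does not go through.
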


Alt's problem is an optimization subject to a constraint; as such any minimizer $M$ has a Lagrange multiplier.  Specifically, there is a $\kappa \ne 0$ so for any crescent $(X,\phi_-) \in M$, if $\gamma(s)$ reparameterizes $X|_{\partial_+\Delta}$ by arclength then
$$\gamma_{ss} = \kappa \nu(s)$$
where $\nu$ is the outer side-normal to $X$ at the thread.  We call $\kappa$ the \emph{free thread curvature}.  The surface is real analytic on the interior of its domain by Classical regularity \cite{dhkw1}.  Work on boundary regularity at the thread both preceeded and followed Alt's existence work.  The strongest result before this paper was:
\begin{theorem} \textbf{[Hildebrandt, Dierkes, Lewy]}
	If $M$ is an Alt minimizer for the thread problem $\sP(\Gamma,L)$ then each crescent $(X,\phi_-) \in M$ has a real-analytic thread curve $X|_{\partial_+^o \Delta}$.  At any point $p \in \partial_+^o\Delta$, the crescent may be extended to a minimal surface $\tilde{X}$ defined on $\Delta \cup B_\e(p)$.  At $p$ there may be a branch point, but only of even order.
\end{theorem}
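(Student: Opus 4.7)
The plan is to pass from the abstract minimality of $(X,\phi_-)$ to the strong Euler--Lagrange system at the thread, then bootstrap regularity, reflect across the thread, and finally analyze branch points via the Weierstrass representation. I am not trying to improve on Hildebrandt--Dierkes--Lewy but to outline the classical route.

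\textbf{Step 1 (weak Euler--Lagrange at the thread).} First I would make precise that $X|_{\partial_+^o\Delta}$ satisfies the constant-curvature condition $\gamma_{ss}=\kappa\nu$ weakly. Fix an arc $J\subset\partial_+^o\Delta$ with $X(J)$ disjoint from the wire and from the other crescents. For any compactly supported inner variation $\eta\in C^\infty_c(J)$ transverse to the thread, push $X$ by a family of boundary-preserving diffeomorphisms of $\Delta$ that slide $J$ outward by amount $t\eta$, adjusting the parametrization on the remaining boundary components so the total thread length changes at rate $\int \langle \eta,\nu\rangle\,ds$. Using conformality to equate Dirichlet energy with area, and minimality of the constrained problem against the Lagrange multiplier $\kappa$, produces the weak identity $\int \langle \gamma_{ss}-\kappa\nu,\eta\rangle=0$. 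The vector $\nu$ here is the side-normal to $X$ along $\partial_+^o\Delta$, which exists in $L^2$ from conformality.

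\textbf{Step 2 (bootstrap to smoothness).} Starting from $X\in H^1\cap C^0$, I would follow the Nitsche/Hildebrandt scheme. Conformality controls the boundary modulus of continuity (Courant--Lebesgue), giving Hölder continuity up to $\partial_+^o\Delta$. Next use the nonlinear boundary condition as a coercive natural condition for the harmonic system: pair with auxiliary harmonic test functions, exploit $|X_x|=|X_y|$, $\innp{X_x}{X_y}=0$, and bounded total curvature to obtain a Morrey estimate on $\nabla X$ near $\partial_+^o\Delta$, hence Lipschitz regularity. Once $X$ is Lipschitz, $\nu$ is a well-defined bounded function, the system $\Delta X=0$ with the boundary condition $\gamma_{ss}=\kappa\nu$ is a quasilinear elliptic boundary value problem, and standard Schauder bootstrap promotes $X$ to $C^\infty$ on $\partial_+^o\Delta$.

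\textbf{Step 3 (reflection and analyticity).} With $X$ smooth up to $\partial_+^o\Delta$, I would locally straighten the thread by a conformal change of parameter and consider the holomorphic derivatives $\phi_j=\partial_z X_j$. The boundary condition $\gamma_{ss}=\kappa\nu$ becomes an analytic algebraic relation between the boundary traces of $\phi_j$ and $\overline{\phi_j}$. A Schwarz--Lewy reflection, tailored to this relation and valid because the defining relation is real analytic, extends each $\phi_j$ holomorphically across the straightened boundary. Reintegrating produces the minimal extension $\tilde X$ on $\Delta\cup B_\e(p)$, and its real analyticity immediately yields analyticity of $X|_{\partial_+^o\Delta}$.

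\textbf{Step 4 (even branch order) and the main obstacle.} On the extended domain $\sum_j\phi_j^2\equiv 0$, so a branch point at $p\in\partial_+^o\Delta$ is a common zero of all $\phi_j$. The reflection in Step 3 is an anti-holomorphic involution fixing the thread, and it conjugates $\phi_j$ to a multiple of $\phi_j$; hence the zero of each $\phi_j$ at $p$ is symmetric under this involution, which forces its multiplicity to be even. The main obstacle is Step 2: getting the first rung of regularity past $C^0$ at a free boundary with a nonlinear curvature condition. This is classically handled by a careful choice of comparison surface and isoperimetric/monotonicity input, and it is where most of the Hildebrandt--Dierkes--Lewy machinery is actually spent.
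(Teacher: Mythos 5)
The paper does not prove this theorem: it is stated as background, attributed to Dierkes--Hildebrandt--Lewy and cited to their 1987 J.~Reine Angew.~Math.\ article, with no proof in the text. So there is no ``paper's own proof'' to compare against, only the citation. Your outline is a reasonable sketch of the classical route that paper takes (weak Euler--Lagrange via a constrained inner variation, a Courant--Lebesgue/Morrey bootstrap to $C^{1,\alpha}$, then Lewy-type analytic continuation across the thread), and you correctly identify Step~2 as the place where the real machinery is spent.

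However, Step~4 as written has a genuine gap. Symmetry of a holomorphic zero under an anti-holomorphic involution that fixes the boundary does \emph{not} force even multiplicity: if $\phi_j(z)=az^k+O(z^{k+1})$ in straightened coordinates, then the reflected datum $\overline{\phi_j(\bar z)}=\bar a z^k+O(z^{k+1})$ also vanishes to order exactly $k$, and requiring the two to agree up to a constant multiple constrains only the phase of $a$, not the parity of $k$. The actual mechanism behind the ``even order'' conclusion is geometric, not symmetry-theoretic: near a boundary branch point of order $k$ one has $X(t)-X(p)=ct^{k+1}+o(t^{k+1})$ for real $t$ (after integrating the Weierstrass data), so the thread image traces through $X(p)$ with a $(k+1)$-st power behavior. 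The free thread satisfies $\gamma_{ss}=\kappa\nu$ in arclength, hence is a regular curve; for $t\mapsto X(t)$ to pass through $X(p)$ as a regular curve (rather than doubling back or cusping) one needs $k+1$ odd, i.e.\ $k$ even. That regularity-of-the-thread argument, not the involution, is what Dierkes--Hildebrandt--Lewy use, and your proposal should be amended accordingly. Also note that the extension across the thread is not a literal Schwarz reflection in an isometry of $\R^3$ (the constant-curvature boundary condition is nonlinear), so the ``anti-holomorphic involution'' you invoke is not even available in the form your Step~4 requires; what one actually gets from Lewy's continuation is a holomorphic extension of the Weierstrass data, without a built-in conjugation symmetry.
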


Finally, we recall that minimizing Dirichlet energy minimizes the area:
\begin{lemma}\label{lem:intro:morrey} \textbf{[Morrey]} Let $X$ be of class $C^0(\Delta,\R^3) \cap H^1(\Delta^o,\R^3)$.  Then, for every $\e>0$, there exists a homeomorphism $\tau_\e$ of $\Delta$ onto itself which is of class $H^1$ on $\Delta$ which reparametrizes $X$ as $Z_\e = X \circ \tau_\e$ so that$$Z_\e \in C^0(\Delta,\R^3) \cap H^1(\Delta^o,\R^3)$$
	and
	$$D(Z_\e) \le A(X) + \e.$$
	Here $A(X)$ is the area.
\end{lemma}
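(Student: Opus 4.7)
The plan rests on the pointwise inequality
$$\half(|Z_x|^2+|Z_y|^2) \;\ge\; |Z_x\times Z_y|,$$
a consequence of AM--GM and Cauchy--Schwarz, with equality iff $Z$ is weakly conformal ($|Z_x|=|Z_y|$ and $\innp{Z_x}{Z_y}=0$). Integrating this gives $D(Z)\ge A(Z)=A(X)$ for any reparametrization $Z=X\circ\tau$, so the task reduces to producing a homeomorphism $\tau_\e$ for which $Z_\e=X\circ\tau_\e$ is \emph{nearly} conformal, forcing the inequality to be an approximate equality.

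To achieve near-conformality I would first mollify $X$ to a sequence $X_\delta\in C^\infty(\Delta,\R^3)$ with $X_\delta\to X$ in $H^1(\Delta^o)$ and uniformly on $\Delta$, so that the pulled-back form $g_\delta:=dX_\delta\cdot dX_\delta^T$ is a smooth positive semi-definite symmetric $2\times 2$ form pointwise on $\Delta$. To remove degeneracies at branch points or rank drops, perturb:
$$g_\delta^\eta \;:=\; g_\delta + \eta\, g_{\text{euc}},$$
a genuine smooth Riemannian metric on $\Delta$. By existence of isothermal coordinates on the disc --- equivalently, solving the Beltrami equation with bounded coefficient and fixing three boundary points --- there is a homeomorphism $\tau_{\delta,\eta}:\Delta\to\Delta$, $H^1$ in the interior, such that $\tau_{\delta,\eta}^* g_\delta^\eta$ is a positive scalar multiple of the Euclidean metric. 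In this conformal chart, Dirichlet equals the area of $(\Delta,g_\delta^\eta)$:
$$D(X_\delta\circ\tau_{\delta,\eta}) \;=\; \int_\Delta \sqrt{\det g_\delta^\eta}\,dx\,dy \;=\; \int_\Delta\sqrt{\det g_\delta + \eta\operatorname{tr}g_\delta+\eta^2}\,dx\,dy.$$

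Letting $\eta\to 0$ drives the right side down (by monotone convergence) to $A(X_\delta)=\int\sqrt{\det g_\delta}\,dx\,dy$, and letting $\delta\to 0$ sends $A(X_\delta)\to A(X)$ while the $H^1$ approximation errors vanish; a diagonal choice $(\delta_\e,\eta_\e)$ then yields $Z_\e=X_{\delta_\e}\circ\tau_{\delta_\e,\eta_\e}$ with $D(Z_\e)\le A(X)+\e$. The main obstacle I expect is analytic rather than strategic: ensuring that the $\tau_{\delta,\eta}$ actually extend to homeomorphisms of the closed disc (handled by the three-point boundary normalization in the uniformization step) and, more delicately, that uniform $H^1$ control of $\tau$ survives the limits so that the final $Z_\e$ still lies in $C^0\cap H^1$. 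Morrey's original argument uses precisely the boundedness of the Beltrami coefficient of $g_\delta^\eta$ --- which holds once $\eta>0$ is fixed --- to obtain these bounds, and then forces $\delta$ to go to zero much faster than $\eta$ so that the mollification error is absorbed before the conformal chart degenerates.
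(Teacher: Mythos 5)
The paper does not actually prove this lemma: it is quoted as Morrey's classical $\epsilon$-conformality lemma and used as a black box (see the cited papers of Morrey and the treatment in Dierkes--Hildebrandt--K\"uster--Wohlrab), so your proposal must be judged against the classical argument rather than against anything in the text. Your opening moves are the standard ones (the pointwise inequality $\half(|Z_x|^2+|Z_y|^2)\ge|Z_x\times Z_y|$, regularization of the induced metric by adding $\eta\,g_{\mathrm{euc}}$, isothermal coordinates for the augmented metric, and the computation $\det(g_\delta+\eta I)=\det g_\delta+\eta\operatorname{tr}g_\delta+\eta^2$), and the displayed identity should only be the inequality $D(X_\delta\circ\tau_{\delta,\eta})\le\int_\Delta\sqrt{\det g_\delta^\eta}\,dx\,dy$, since the energy of $X_\delta\circ\tau$ is computed from $\tau^*g_\delta$, not $\tau^*g_\delta^\eta$; that discrepancy is harmless because it goes the right way.

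The genuine gap is at the end: the map you produce is $Z_\epsilon=X_{\delta_\epsilon}\circ\tau_{\delta_\epsilon,\eta_\epsilon}$, a reparametrization of the \emph{mollified} map, whereas the lemma demands $Z_\epsilon=X\circ\tau_\epsilon$. This is not cosmetic: in the paper's application the reparametrized surface must retain exactly the boundary values of $X$ so that it remains a thread-wire disc attached to $\Gamma$ with the prescribed thread length, and a mollification destroys that. To close the gap you must estimate $D(X\circ\tau_{\delta,\eta})$, hence $D\bigl((X-X_\delta)\circ\tau_{\delta,\eta}\bigr)$, and the only estimate available is $D(Y\circ\tau)\le K\,D(Y)$ with $K$ the Euclidean distortion of $\tau_{\delta,\eta}$, which is of size roughly $\sup_\Delta|dX_\delta|^2/\eta$ (in particular the Beltrami coefficient of $g_\delta^\eta$ is \emph{not} bounded away from $1$ uniformly in $\delta$). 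For a general $C^0\cap H^1$ map, $\sup_\Delta|dX_\delta|$ blows up as $\delta\to0$ at a rate bearing no relation to the rate at which $\|\nabla(X-X_\delta)\|_{L^2}$ tends to zero, so the error term $K(\delta,\eta)\,\|\nabla(X-X_\delta)\|_{L^2}^2$ need not become small for any choice of $\delta=\delta(\eta)$; your concluding prescription to send $\delta\to0$ much faster than $\eta$ has no mechanism behind it. This commutation-of-limits difficulty is precisely where the real work in Morrey's lemma lies, and the genuine proofs arrange the conformal change of variables so that it can be applied to $X$ itself (working with the unsmoothed metric $dX\cdot dX+\epsilon\,g_{\mathrm{euc}}$ via Morrey's theory of quasiconformal maps with merely integrable coefficients, rather than with a smoothed metric), so as presented your argument proves an approximation statement but not the lemma.
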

\section{Proof of Theorem \ref{thm:nearwire}}\label{sec:nearwire}

In this section we prove that when the thread length is near the wire length, any minimizer to the thread problem is near-wire.  Let us make the notion of near-wire precise.

\begin{definition}\label{def:normal_nbhd} Let $\Gamma(t)$ be a regular $C^1$ space curve.  Let $D_r(\Gamma,t')$ be the radius-$r$ disc the center of which $\Gamma$ pierces normally at $t=t'$.  We define the \emph{normal $r$-neighborhood of $\Gamma$} to be the union of these normal discs:
	$$N_r (\Gamma) = \cup_{t\in\dom \Gamma} D_r(\Gamma,t).$$
	We say the normal neighborhood is \emph{simple} if the normal discs are pairwise disjoint.  We say an Alt minimizer on $\Gamma$ is a \emph{near-wire minimizer} if it lies in a simple normal neighborhood of $\Gamma$.
\end{definition}

\subsection{Constructing a good near-wire competitor}
We begin by constructing a near-wire Alt competitor with small Dirichlet energy.  Then we show that to best this, any Alt minimizer must itself be near the wire.
If $\Gamma$ is a straight segment, the condition of the theorem is impossible to meet and we are done.  Otherwise, the curvature of $\Gamma$ attains its maximum $\kmax > 0$ at some $s_0$.  Without loss of generality, assume that $\Gamma$ is parametrized proportional to arclength.  Let $G(s) = \Gamma(s_0+s)$.  Then by Taylor's theorem, 
$$G(s) = \left(s - \frac{\kmax}{6} s^3, \frac{\kmax}{2} s^2, \frac{\kmax T_\Gamma(s_0)}{6} s^3 \right) + o(s^3).$$
in Frenet coordinates for $s$ near $0$.  For small $w>0$ we attach a surface component to $\Gamma$ using
\begin{eqnarray*}
	\Xi(x,y) &=& x G'(0) + (y w^2/2 + (1-y)x^2/2)G''(0) + O(x^3).\\
	\Xi_x &=& G'(0) + (1-y)x G''(0) + O(x^2)\\
	\Xi_y &=& (w^2/2 - x^2/2) G''(0) + O(x^2)\\
	|\Xi_x \times \Xi_y| &=& (w^2-x^2)/2 \kappa_G(0) + w^2 O(x^2) + O(x^3)
\end{eqnarray*}
for $(x,y) \in [-w,w] \times [0,1]$.
Then
\begin{eqnarray*}
	A(\Xi)& =& \frac{2}{3} w^3 \kappa_\Gamma(s_0) + O(w^4).\\
	\ell(\Xi|_{[-w,w]\times\{0\}}) &=& 2w \\
	\ell(\Xi|_{[-w,w]\times\{1\}}) &=& \left| \left(2w - \frac{\kmax^2}{3}w^3, 0, -\frac{\kmax T_\Gamma(s_0)}{3} w^3\right) + o(w^3) \right| \\
	&=& 2w - \frac{2 \kmax^2}{3} w^3 + o(w^3)
\end{eqnarray*}
Using Morrey's Lemma (Lemma \ref{lem:intro:morrey}), we can find a thread-wire disc $(X,\phi_-)$ with
\begin{eqnarray*}
	D(X) &=& \frac{2}{3} \kmax w^3+ O(w^4)\\
	\ell(X|_{\partial_+ \Delta}) &=& 2w - \frac{2}{3} \kmax^2 w^3+ o(w^3)\\
\ell(X|_{\partial_-\Delta}) &=& 2w.
\end{eqnarray*}
Pick
\begin{equation}\label{eq:nearwire:w}
	w = \left(\frac{2\kmax^2}{3}\right)^{-1/3} \lambda^{1/3} + o(\lambda^{1/3})
\end{equation}
so that the Alt competitor $P_0 = \{(X,\phi_-)\}$ satisfies
\begin{eqnarray}\label{eq:nearwire:P0}
	D(P_0) &=& \kmax^\n \lambda + o(\lambda)\\
	\nonumber \ell(P_0) &=& \ell(\Gamma) - \lambda
\end{eqnarray}
and is thus admissible for the thread problemn $\sP(\Gamma,L)$ when $L$ satisfies \eqref{eq:goodLrange}.
\newcommand{\Ghat}{\hat{\Gamma}}
\subsection{Minimizer's thread lies near wire}
Now we consider an Alt minimizer $M$ for $\sP(\Gamma,L)$ with $L$ satisfying \eqref{eq:nearwire:Lrange} for small $\lambda$ to be determined later. As a minimizer, it must beat $P_0$, so by \eqref{eq:goodLrange} and \eqref{eq:nearwire:P0}, we know
\begin{equation}\label{eq:nearwire:DM}
 D(M) \le \kmax^\n \lambda + o(\lambda).
\end{equation}

We parameterize space near $\Gamma$ using a parallel orthonormal frame $E_1,E_2$ along $\Gamma$:
\begin{eqnarray*}
 E_i'(s) &=& -\innp{E_i}{\Gamma''(s)} \Gamma'(s), \qquad i=1,2\\
\exp:(s,x,y) &\mapsto &\Gamma(s) + x E_1(s) + y E_2(s).
 \end{eqnarray*}
Relative to the bases $\partl{s},\partl{x},\partl{y}$ and $\Gamma'(s), E_1(s), E_2(s)$ we have
$$d\exp = \begin{mymatrix}
 1 - \innp{x E_1 + yE_2}{\Gamma''(s)}  & 0 & 0\\
 0 & 1 & 0 \\
 0 & 0 & 1
\end{mymatrix}$$
Let $r = \sqrt{x^2+y^2}.$ Let $R_0$ be small enough so the $R_0$ normal neighborhood of $\Gamma$ is simple. Let $N_R$ be the $R$ normal neighborhood of $\Gamma$ for $R < R_0/2$. Then $\exp$ gives a diffeomorphism onto $N_R$ with
\begin{equation}\label{eq:nearwire:len_exp}
 |d\exp v| \le |v|(1 + 4r \kappa_\Gamma(s) + O(r^2)).
\end{equation}
Define a map $\Psi$ from $N_R$ to the strip $T_R = [0,\ell(\Gamma)]\times[0,R]$ by composing $\exp^\n$ with the map
$$(s,x,y) \mapsto (s,r).$$

The map $\Psi$ will allow us to project pieces of crescents of the Alt minimizer $M$ to the narrow rectangle $T_R$. 
There we will estimate areas and lengths to show that it is not profitable to leave the strip. It will be technically easier to work with sets whose boundaries consist of \emph{finitely many} smooth curves.  In the following argument we will approximate objects at several places using small positive constants $\e_1,\e_2,\e_3,\e_4,\e_5,\e_6$.  These constants will be small relative to the $C^3$ data of $\Gamma$ (i.e. its length, and the sup norms of its first three derivatives); at the end we will let them become arbitrarily small and obtain the desired result.  See \textbf{Table 1}.

\begin{table}
	\begin{tabular}{rp{4.2in}}
	qty. & role\\
	$\e_1$ & controls thread-length change from selecting finite subset of surface components of $M$\\
	$\e_2$ & controls length of segments in piecewise linear approximation $\Ghat_{\e_2}$ of $\Gamma$\\
	$\e_3$ & equals radius of a pipe surface about $\Ghat$ containing $\Gamma$; given by Lemma~\ref{lem:pipe} \\
	$\e_4$ & controls length of subcurves which we break thread into \\
	$\e_5$ & controls area error committed in modifying thread of $M$\\
	$\e_6$ & equals radius of tubular nbhd. of $\Gamma$ containing pipe surface $P_{\e_3}(\Ghat_{\e_2})$; given by Lemma~\ref{lem:pipe}.
	\end{tabular}
	\caption{Roles of $\e$'s.}
\end{table}

Let $M'\subset M$ be an Alt competitor consisting of finitely many crescents of $M$ such that
\begin{equation}\label{eq:Mprimelen}
	\ell(M')\ge\ell(M)-\e_1.
\end{equation}
\begin{figure}
\picholder{4in}{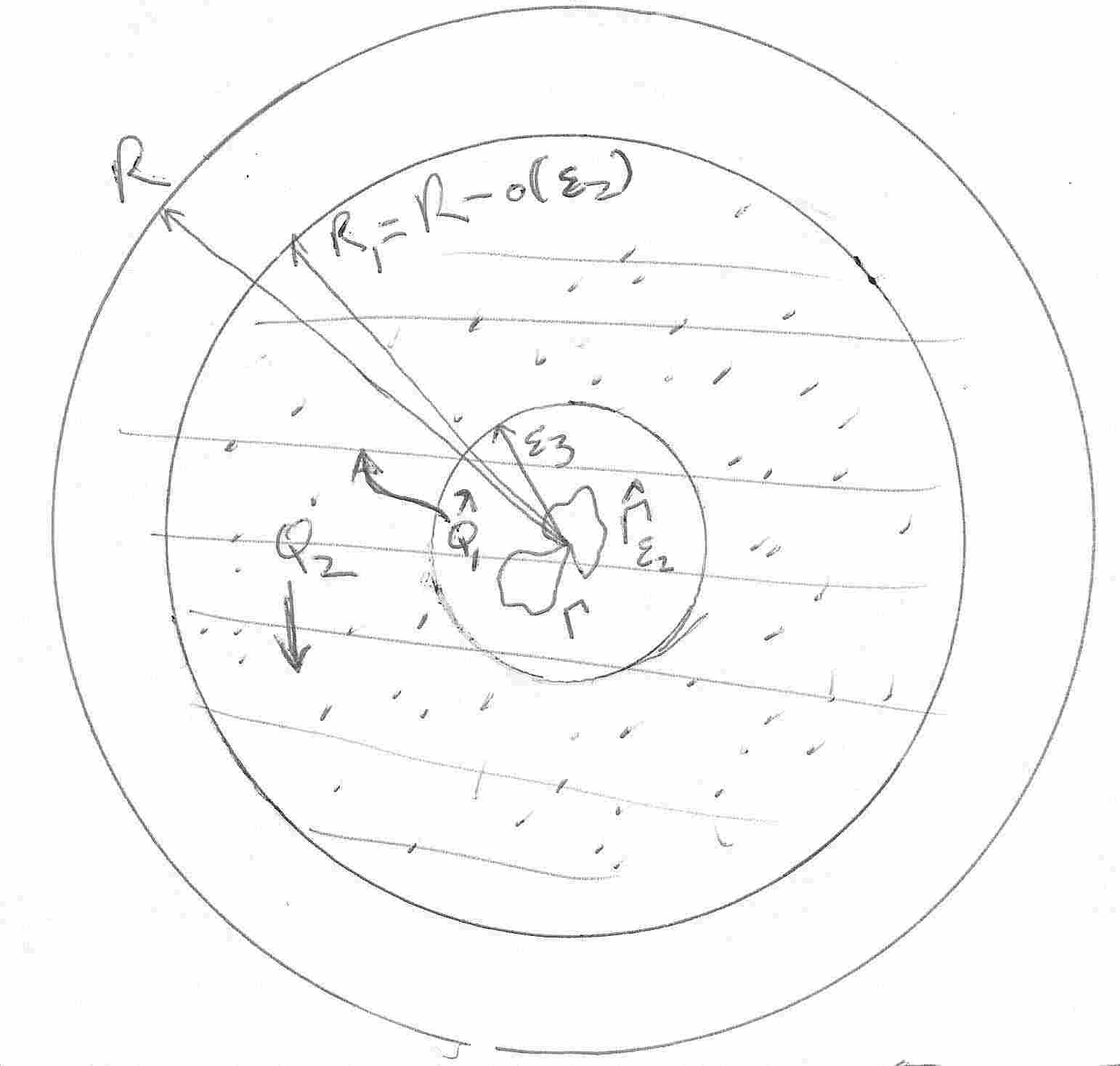}
\mycaption{Schematic of jointed-pipe regions $Q_1$ and $Q_2$.}
\label{fig:tnwire:Q}
\end{figure}

By Lemma \ref{lem:pipe} we may approximate the wire $\Gamma$ with a finitely piece-wise linear curve $\Ghat_{\e_2}$ for $\e_2>0$. For $0<\e_3<R_1:=R-o(\e_2)$ we obtain concentric jointed pipe surfaces $P_{\e_3} := P_{\e_3}(\Ghat_{\e_2})$ and $P_{R_1} := P_{R_1}(\Ghat_{\e_2})$ enclosing $\Gamma$ and lying in $N_R(\Gamma)$. Let $Q_1$ be the closed solid region bounded by the jointed pipe surface $P_{R_1}$ and the discs $D_R(\Gamma,0)$ and $D_R(\Gamma,\ell(\Gamma))$. Let $Q_2$ be the closed solid region bounded by both jointed pipe surfaces and the discs $D_R(\Gamma,0)$ and $D_R(\Gamma,\ell(\Gamma))$.  See \figref{fig:tnwire:Q}.

\begin{figure}
\picholder{4in}{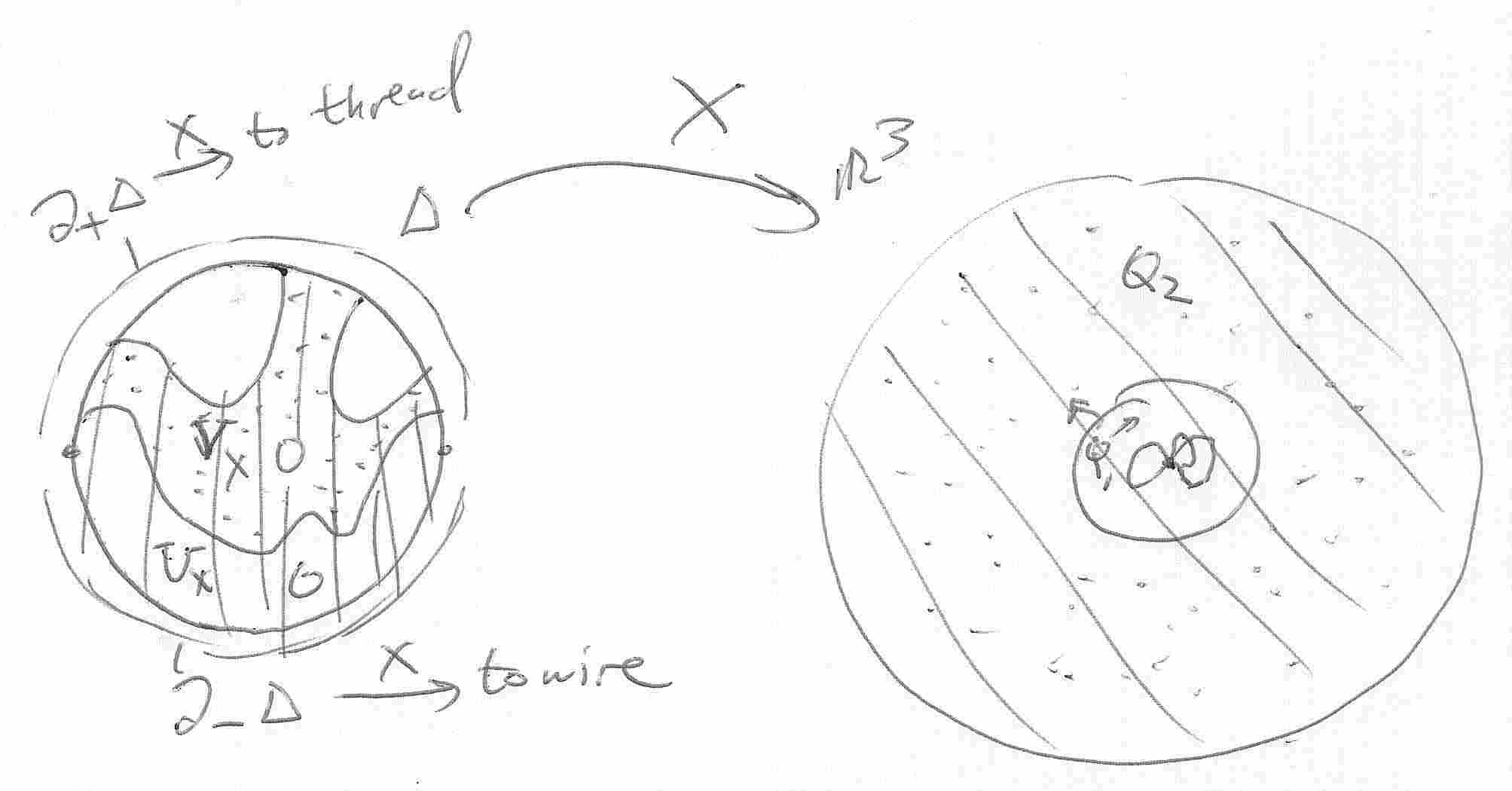}
\mycaption{Regions $U_X$ and $V_X$.}
\label{fig:tnwire:UV}
\end{figure}

At this point we will make a construction for each crescent in $M'$.  Let $X \in M'$ be a crescent.  Let $U_X$ be the connected component of $X^\n(Q_1)$ that contains $\partial_-\Delta$. Let $V_X$ be $U_X\cap X^\n(Q_2)$. Because $V_X$ lies a positive distance away from $\partial_-\Delta$, the map $X$ is real analytic in a neighborhood of $V_X$. Because the boundary $Q_2$ is piece-wise real analytic, the boundary $V_X$ consists of finitely many real analytic curves.  See \figref{fig:tnwire:UV}.

\begin{figure}
\picholder{2.5in}{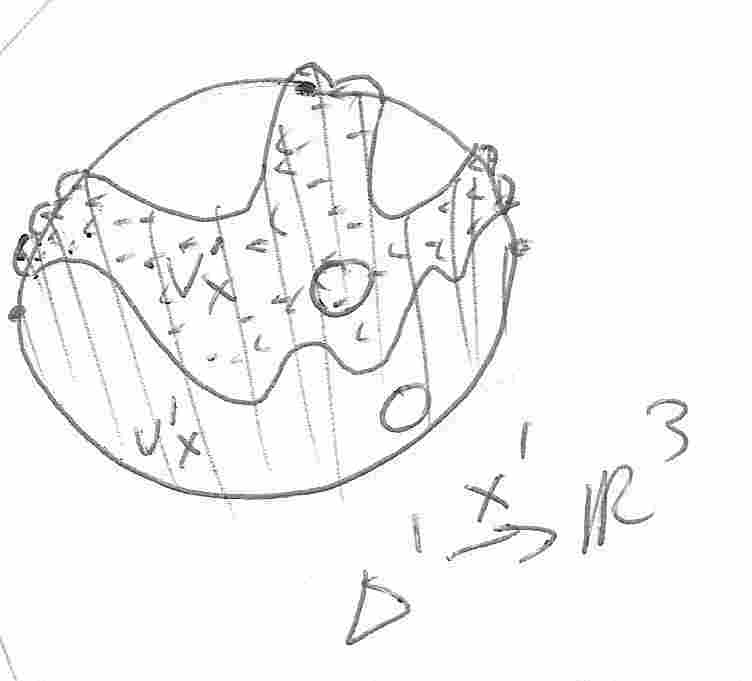}
\mycaption{Regions $U'_X$ and $V'_X$.}
\label{fig:tnwire:UVprime}
\end{figure}

The free thread curve $X|_{\partial_+\Delta}$ of the crescent $X$ is a curve with curvature given by a global constant $\kappa>0$ associated to $M$. It will be technically convenient to approximate each free thread curve using a finitely piece-wise linear curve. Pick a finite sequence of points in $\partial_+^0 \Delta$ so the first and last points mapped by $X$ to within the jointed pipe surface $P_{\e_3}$ and so that consecutive points are no more than $\e_4$ free thread arclength apart. We may arrange that this sequence includes the end points of the finitely many arcs $V_X\cap \partial_+\Delta$. Let $\sigma$ be the curve connecting the $X$ images of these points so that the curve is piece-wise linear \emph{when pulled back by the exponential map of $\Gamma$}. By Taylor's theorem the length of $\sigma$ does not exceed the length of the corresponding part of the thread by more than a factor of
\begin{equation}\label{eq:factor}
1+\kappa^2\e_4^2/2+o(\e_4).
\end{equation}
We attach semicircular regions to the outside of $\Delta$ to form a set $\Delta'$ whose new boundary parametrizes $\sigma$. We may extend $X$ to obtain $X'$ defined on $\Delta'$ so that $X'$ has area exceeding that of $X$ by no more than $\e_5/|M'|$.  Here $|M'|$ is the number of crescents in $M'$.  Let $U_X'$ and $V_X'$ be respecticely $U_X$ and $V_X$ union the semi-circular regions abutting them.  See \figref{fig:tnwire:UVprime}.

Let $R_2 = R_1 - O(\e_2^2)$ be the radius provided by Lemma \ref{lem:pipe} so that $N_{R_2}$ is enclosed by $P_{R_1}$.  We use these the above efforts to construct a subset of the rectangle $T_{R}=(0,\ell(\Gamma))\times[0,R)$ ($R<R_2$ to be determined later) which we study isoperimetrically in order to prove our theorem.  First we define $E$ to be the union of $(\im \psi\circ X'|_{V'_X}) \cap T_R$ for all $X\in M'$ and the rectangle $0\le y\le \e_6$ in $T_R$. (See \figref{fig:tnwire:E}.)  Here we choose $\e_6$ according to Lemma \ref{lem:pipe} so that the jointed pipe surface $P_{\e_3}$ lies within distance $\e_6$ of $\Gamma$.  The boundary of $E$ above $y=\e_6$ is not necessarily just the image of the thread curve.

\begin{figure}
\picholder{3.5in}{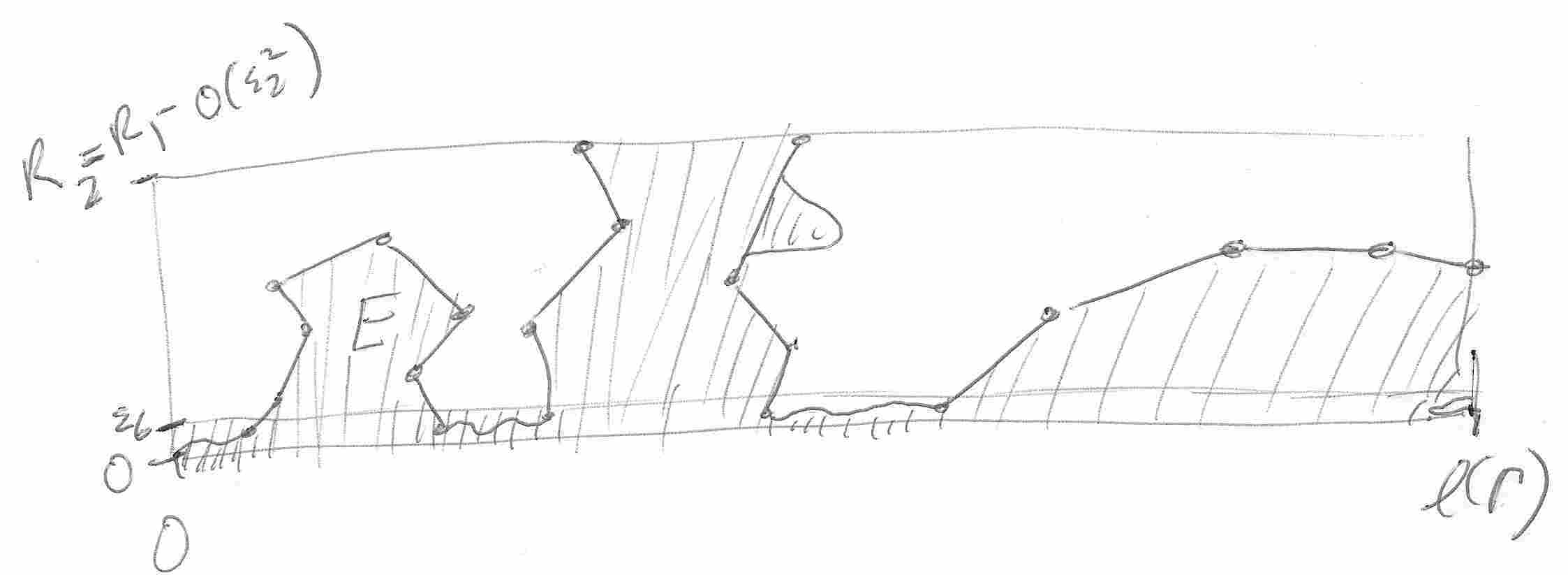}
\mycaption{Region $E$.}
\label{fig:tnwire:E}
\end{figure}

Consider the $\Psi$-image of the modified free thread curve near the wire:
$$\tau_X = \Psi(X'(\partial_+\Delta') \cap N_R(\Gamma)).$$
By \eqref{eq:nearwire:len_exp} and \eqref{eq:factor}, the $\beta$-weighted $\sH^1$ measure of $\tau_X$ is at most $\ell(X|_{\partial_+\Delta})(1+\kappa\e_4/2+o(\e_4)+ O(R^2))$ where
$$\beta(x,y)=1-4y\kmax.$$
Let $\tau_X^2$ be $\tau_X$ restricted to $y \ge \e_6$, union $\{y=\e_6\}$.  Equivalently, it is what you get when you take the part of $\tau_X$ below $y=\e_6$ and project it onto $y=\e_6$.  This operation does not increase weighted length, and we see from \eqref{eq:factor}
\begin{equation}\label{eq:threadnearwire:tau}
	\sH^1_\beta(\tau_X^2) \le \ell(X|_{\partial_+\Delta})(1 + \kappa^2 \e_4^2/2 + o(\e_4) + O(R^2)).
\end{equation}
Moreover, by construction of $X'$, we see that $\tau_X^2$ is a union of finitely many line segments.  Define $\tau = \cup_{X \in M'} \tau_X^2.$  Then
\begin{equation}\label{eq:threadnearwire:PE}
	\sH^1_\beta(\tau) \le \ell(M')(1 + O(\e_4) + O(R^2)) = (\ell(\Gamma) - \lambda)(1+ O(\e_4) + O(R^2)).
\end{equation}
\begin{figure}
\picholder{3.5in}{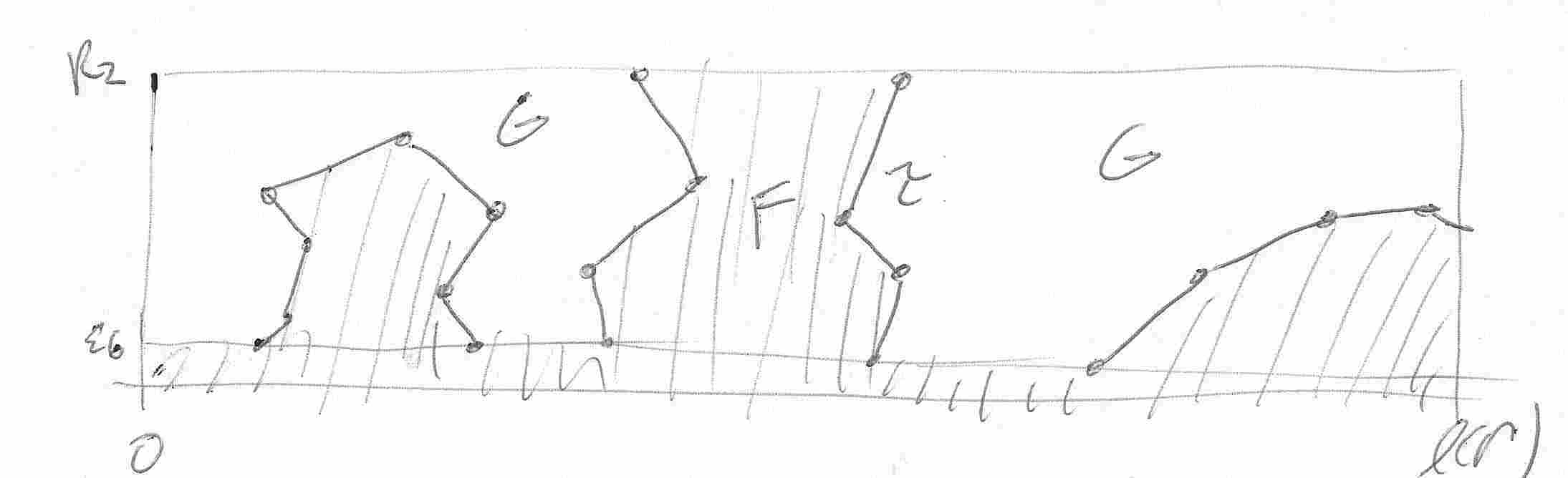}
\mycaption{Regions $F,G$.}
\label{fig:tnwire:FG}
\end{figure}

Let $F$ consist of $\tau$, $y \le \e_6$ in $T_R$, and all points $x$ in $T_R$ which are trapped by $\tau$ in the following sense: any homotopy of $X'|_{\partial_+ V_X'}$ to lie below $y=\e_6$ must pass through $x$.  (This includes in $F$ a subset of the finitely many components of $\{ y > \e_5 \} \cap T_R \setminus \tau$.)  See \figref{fig:tnwire:FG}.

\begin{lemma}\label{lem:bigclaim} We claim that $F \subset E$, and that the Lebesgue area of $F$ is bounded
\begin{equation}\label{eq:area_bound}
	A(F) \le (\kmax^\n \lambda + o(\lambda))(1 + 4\kmax R + O(R^2)) + \e_5 + \ell(\Gamma) \e_6.
\end{equation}
\end{lemma}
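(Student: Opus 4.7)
The plan is to handle the inclusion $F \subset E$ via a topological argument about homotoping the projected thread down to the $\e_6$-strip, and the area bound via a comass estimate for the projection $\Psi$ combined with the Dirichlet bound \eqref{eq:nearwire:DM}.

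For the inclusion, the strip $\{0 \le y \le \e_6\}$ lies in $E$ by construction, and each $\tau_X^2$ lies in $\Psi \circ X'(V_X') \cup \{y = \e_6\} \subset E$, since the portion of the modified thread that enters the jointed pipe $Q_2$ is parametrized on $\partial_+ V_X'$. For the trapped interior, take $p \in T_R \setminus E$ with $y(p) > \e_6$; then $p \notin \Psi \circ X'(V_X')$ for every $X \in M'$. For each $X$ the compact planar set $\Psi \circ X'(V_X')$ contains the curve $\Psi \circ X'(\partial_+ V_X')$ and abuts $\{y = 0\}$ along the projected wire, so the curve $X'|_{\partial_+ V_X'}$ can be homotoped in $T_R$, through its own image $\Psi \circ X'(V_X')$, down to $\{y \le \e_6\}$ without ever crossing $p$. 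Hence $p$ is not trapped, so $F \subset E$.

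For the area bound, decompose $F$ into $F \cap \{y \le \e_6\}$ (Lebesgue area at most $\ell(\Gamma)\e_6$), the one-dimensional set $\tau$ (area zero), and the two-dimensional trapped part, which by the inclusion sits inside $\bigcup_{X \in M'} \Psi \circ X'(V_X')$. Viewing $ds \wedge dr$ as a 2-form on $N_R$, the estimate \eqref{eq:nearwire:len_exp} together with the 1-Lipschitz property of $r$ yields a comass bound $|ds \wedge dr|_{N_R} \le 1 + 4\kmax r + O(r^2)$. The area formula then gives
\[
A(\Psi(S)) \le A(S)\bigl(1 + 4\kmax R + O(R^2)\bigr)
\]
for any $C^1$ surface $S \subset N_R$ with $r \le R$. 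Applying this to each $X'(V_X')$, summing over $X \in M'$, and using (i) the conformality of Alt minimizers, which forces $A(M) = D(M)$, (ii) the construction of $X'$, which adds at most $\e_5$ total area beyond $M$, and (iii) the Dirichlet bound \eqref{eq:nearwire:DM}, we obtain $\sum_{X} A\bigl(\Psi(X'(V_X'))\bigr) \le (\kmax^{\n}\lambda + o(\lambda))(1 + 4\kmax R + O(R^2)) + \e_5$ after absorbing an $O(R)$ multiple of $\e_5$ into $\e_5$. Combining the three pieces yields \eqref{eq:area_bound}.

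The main obstacle is the ``push-down'' homotopy in the first step, since $\Psi \circ X'(V_X')$ need not be simply connected and $\Psi$ collapses the angular direction around $\Gamma$, so the pre-image of a single point can be topologically nontrivial. A clean realization uses the gradient flow of $y$ restricted to the compact planar set $\Psi \circ X'(V_X')$: the flow cannot exit through the lateral boundary (which comes from the projected pipes $P_{\e_3}$ and $P_{R_1}$), and terminates on $\{y \le \e_6\}$, so evading the single exterior point $p$ is automatic. The Jacobian estimate needs only be applied where $X'$ is smooth, which is ensured by the real-analyticity of $X$ in the interior of $V_X$ and the explicit construction of the semicircular extensions.
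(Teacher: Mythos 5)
Your area estimate runs the same way the paper's does: project into $T_R$, use the area formula together with the Jacobian/comass bound from \eqref{eq:nearwire:len_exp}, convert area to Dirichlet energy by conformality, and absorb the $\e_5$ and $\ell(\Gamma)\e_6$ contributions. That portion is fine (the paper happens to integrate over $U_X'$ rather than $V_X'$, but that is only a looser choice).

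The inclusion $F \subset E$, however, has a genuine gap, and it is precisely the point you flagged yourself. You want to show $p\notin E$ implies $p$ is not trapped by exhibiting a homotopy of $\Psi\circ X'(\partial_+V_X')$ to $\{y\le\e_6\}$ that avoids $p$, and you propose to realize this by flowing $-\partial_y$ inside $S := \Psi\circ X'(V_X')$. But $S$ is only a continuous image of a surface piece; it can perfectly well have bounded complementary components in $\{y>\e_6\}$ (this is exactly the non-injectivity of $X'$ and the angular collapse of $\Psi$ that you mention). If $p$ lies in such a hole, the downward flow along $S$ stalls on the hole's boundary and never reaches $\{y\le\e_6\}$; $S$ does not deformation retract onto its bottom. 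And this is not a technicality you can wave away: a point $p$ in such a hole around which the projected thread has nonzero winding is trapped but lies in neither $S$ nor $\{y\le\e_6\}$, which is exactly the configuration $F\subset E$ must exclude. Ruling it out is the entire content of the inclusion. (A smaller factual slip: $V_X'$ is cut off from $\partial_-\Delta$ by the inner pipe, so $S$ abuts $\{y\approx\e_3\}$, not $\{y=0\}$.)

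The paper handles this with a winding-number/degree argument rather than a retraction of $S$. Roughly: if $p$ is trapped then $\Psi\circ X'|_{\partial\Delta'}$ must wind nontrivially around $p$; but $\partial\Delta'$ bounds the disc $\Delta'$, so that loop is null-homotopic inside $\Psi\circ X'(\Delta')$, forcing $p\in\Psi\circ X'(\Delta')$. One then shows $p\in\Psi(X'(U_X'))$ by retracting the simply connected complementary component of $U_X'$ onto $\partial_-\Delta$, and since $y(p)>\e_6$ this forces $p\in\Psi(X'(V_X'))\subset E$. That degree-theoretic step, showing the trapped region is filled by the image of the disc, is the key idea missing from your proof; the gradient flow cannot substitute for it.
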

\begin{proof}
To see the first part of the claim: let $p$ be any point in the interior of $F$ with $y(p) > \e_6$.  (If there is no such $p$ we are done.)  Then there is some $X\in M'$ so $\Psi \circ X'$ maps $\partial \Delta'$ to a continuous curve lying in $\R^2\setminus p$ which is not null-homotopic.  The boundary $\partial \Delta'$ is contractible which means $\Psi \circ X'|_{\partial \Delta'}$ is contractible in the image of $\Psi \circ X'$; we conclude that $p \in \Psi \circ X'$.  We claim moreover that $p \in U'_X$.  If not, it lies in a connected component $C$ of $\Delta'\setminus U'_X$.  Since $U'_X$ is connected, the component is simply connected.  Given the level of regularity of our boundary (finitely piece-wise real analytic) it is not hard to see that we can retract $\Delta'\setminus C$ onto $\partial_- \Delta$ without passing through $C$.  This induces a homotopy of the $\Psi$ image of $\partial_+ \Delta' \cap U'_X$ into $[0,\ell(\Gamma)]\times\{0\}$ which by sentence 3 of this paragraph, must pass through $p$.  We get a contradiction.  The set $F$ is clearly Lebesgue measurable.  And $U_X'$ is Lebesgue measurable due to its finitely piecewise real-analytic boundary.  
We write
\begin{eqnarray}\label{eq:threadnearwire:AE}
	A(F)-\ell(\Gamma)\e_6 &\le&  \sum_{X\in M'} A(X'(U_X')) \\
	\nonumber&\le& \sum_{X \in M'} \int_{\Psi(X'(U_X'))} \sH^0(X^\n(\{y\})) d\sH^2 y\\
	\nonumber&=&\sum_{X \in M'} \int_{U_X'} (J \Psi \circ X')(x) d \sL^2(x)\\
	\nonumber&=&\e_5 + \sum_{X \in M'} \int_{U_X} (J \Psi \circ X)(x) d \sL^2(x)\\
	\nonumber&\stackrel{\eqref{eq:nearwire:len_exp}}{\le}&
	\e_5 + (1 + 4\kmax R + O(R^2)) \sum_{X \in M'} \int_{U_X'} (J X)(x) d \sL^2(x)\\
	\nonumber&=&D(M') (1 + 4\kmax R + O(R^2)) + \e_5\\
	\nonumber&\stackrel{\eqref{eq:nearwire:DM}}{\le}& (\kmax^\n \lambda + o(\lambda))(1 + 4\kmax R + O(R^2)) + \e_5.
\end{eqnarray}
Here the third implication follows from the area formula \cite[Thm 3.2.3]{federer} and $J$ denotes Jacobian.  The fifth implication follows from $X$ being conformal.
\end{proof}

We have constructed a set $F \subset T_{R}$ with boundary $\tau$; the area and perimeter are given in \eqref{eq:threadnearwire:PE}, \eqref{eq:threadnearwire:AE}.

\begin{claim} \label{clm:nomeet} For $\lambda$ small relative to the geometry of $\Gamma$, for any $\e_* \in (0,1)$, if we choose 
	\begin{equation}\label{eq:chooseR}
		R = \sqrt{\frac{2\lambda}{\kmax \pi}}(1+\e_*)
	\end{equation}
	and choose $\e_1,\ldots \e_6$ above small enough relative to $\Gamma$ and $\e_*$ then $F$ does not meet $y=R_2$.
\end{claim}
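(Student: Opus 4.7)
The plan is to argue by contradiction.  Suppose that $F$ does meet $y=R_2$; we will use Lemma~\ref{lem:wtd_iso} to force $A(F)$ above the ceiling \eqref{eq:area_bound}.

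First I would isolate a connected component $C$ of $F \cap \{y \ge \e_6\}$ attaining the height $R_2$.  Such a component exists because $F$ contains the full horizontal slab $\{y \le \e_6\}$ of $T_R$ and has piece-wise real-analytic boundary above $y=\e_6$, hence a finite component decomposition there.  The upper boundary of $C$ consists of arcs of $\tau$ together with finitely many segments of $\{y=\e_6\}$; by \eqref{eq:threadnearwire:PE}, the weighted length of the $\tau$-portion is at most $(\ell(\Gamma) - \lambda)(1 + O(\e_4) + O(R^2))$, which is a tiny deficit below the base length $\ell(\Gamma)$.

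Next I would apply Lemma~\ref{lem:wtd_iso} with weight $\beta(y) = 1 - 4y\kmax$.  Because $\beta$ differs from $1$ by only $O(R\kmax)$ on the thin strip $T_R$, this weighted inequality is a perturbation of the classical planar isoperimetric inequality in the half-plane $\{y \ge \e_6\}$, whose extremal shape for reaching height $h = R_2 - \e_6$ is a semicircular cap of area $\frac{\pi}{2} h^2$.  Applying it gives
\begin{equation*}
A(F) \;\ge\; A(C) \;\ge\; \frac{\pi}{2}(R_2 - \e_6)^2\bigl(1 - O(R\kmax)\bigr).
\end{equation*}
Substituting $R_2 = R - O(\e_2^2)$ and $R = \sqrt{2\lambda/(\kmax\pi)}(1+\e_*)$, then taking $\e_2,\e_6$ small compared with $R$ and $\e_*$, produces
\begin{equation*}
A(F) \;\ge\; \frac{(1+\e_*)^2 \lambda}{\kmax}(1 - \eta)
\end{equation*}
for an $\eta$ that can be made arbitrarily small.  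Combining with the upper bound \eqref{eq:area_bound}, namely $A(F) \le \lambda/\kmax + o(\lambda) + \e_5 + \ell(\Gamma)\e_6$, and choosing $\e_5,\e_6$ small and then $\lambda$ small so that the right-hand side is at most $\frac{\lambda}{\kmax}(1+\e_*/2)$, contradicts the lower bound $\frac{\lambda}{\kmax}(1+\e_*)^2(1-\eta) > \frac{\lambda}{\kmax}(1+\e_*)$.

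The main obstacle is applying Lemma~\ref{lem:wtd_iso} to extract exactly the constant $\pi/2$.  The subtlety is that the weight $\beta < 1$ at positive altitude naively rewards tall thin spikes, which would have small enclosed area; however, the constraint that the upper weighted perimeter is at most the base length (deficit $\lambda$) together with $R$ being small enough that $\beta$ stays uniformly close to $1$ prevents eccentric shapes and forces the extremal comparison to be with a near-semicircular cap.  Keeping track of the base offset at $y=\e_6$ and the $O(R\kmax)$ perturbation of the Euclidean isoperimetric constant are the bookkeeping steps to watch.
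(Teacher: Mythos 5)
Your target inequality $A(F)\ge\frac{\pi}{2}(R_2-\e_6)^2(1-O(R\kmax))$ is the right quantity to aim for, but the way you try to obtain it is not valid, and this is the heart of the proof, not a bookkeeping detail. Lemma~\ref{lem:wtd_iso} is an \emph{upper} bound for area in terms of weighted perimeter; it cannot be ``applied'' to a component $C$ of $F$ to produce a \emph{lower} bound on $A(C)$. And indeed no such lower bound is available: a component of $F$ above $y=\e_6$ reaching $y=R_2$ can be a needle of width $\delta$, which has area $\approx\delta(R_2-\e_6)\to 0$ while its own weighted perimeter is only $\approx 2(R_2-\e_6)$. Your ``main obstacle'' paragraph tries to dismiss this via the constraint $\sH^1_\beta(\tau)\le(\ell(\Gamma)-\lambda)(1+\ldots)$, but that constraint caps the \emph{total} perimeter from \emph{above}; a thin spike has \emph{small} perimeter and therefore respects it. So the step $A(C)\ge\frac{\pi}{2}(R_2-\e_6)^2(1-O(R\kmax))$ is simply false as you have stated it, and there is no version of ``classical isoperimetry in the half-plane whose extremal shape for reaching height $h$ is a semicircular cap'' — reaching a given height imposes no area lower bound.

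The paper proves the claim by the complementary argument, which is genuinely different from what you propose. One bounds $A(G)$ for $G=T_{R_2}\setminus F$ from \emph{above} using Lemma~\ref{lem:wtd_iso}, splitting into Case I (some component of $G$ spans both ends $x=0$ and $x=\ell(\Gamma)$) and Case II (no component does), and in Case II further splitting $G=G_1\cup G_2$ so each $G_i$ abuts only one end and thus sits in a one-sided strip $T_Y$ as required by the lemma. The weighted perimeter of $\partial G$ is controlled by $\tau$ via \eqref{eq:threadnearwire:PE}, so $\sH^1_\beta(\partial G)\approx\ell(\Gamma)-\lambda$, and the rounded-rectangle case of \eqref{eq:wtd_iso2} yields
$A(G)\le PR-\tfrac{\pi R^2}{2}+O(R^3)$.
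Only after this does one pass to $A(F)=A(T_{R_2})-A(G)\ge\ell(\Gamma)R_2-PR+\tfrac{\pi R^2}{2}-\ldots\ge\tfrac{\lambda}{\kmax}(1+\e_*)^2+o(\lambda)-\ldots$, which is the lower bound you wanted; the half-disc of area $\tfrac{\pi R^2}{2}$ ``lives'' in $F$ because the extremizing $G$-shape has a quarter-circle carved out where $F$ blocks the strip. Your endgame comparison of $\tfrac{\lambda}{\kmax}(1+\e_*)^2$ against \eqref{eq:area_bound} is essentially correct, but you must first establish the lower bound on $A(F)$ through $G$ (and also deal with the case where $G$ has a component spanning both ends, where the paper notes the spike of $F$ must hug one endpoint of the wire).
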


This is our key claim; it will allow us to show that $M'$ is a near-wire minimizer.  We prove the claim by doing isoperimetric analysis of the complement $G$ of $F$ in $T_{R_2}$.  Indeed, if Claim \ref{clm:nomeet} fails, then we have two cases: Case I if $G$ contains a connected component abutting both $x=0$ and $x=\ell(\Gamma)$ and Case II otherwise.  We may then apply Lemma \ref{lem:wtd_iso}.

\textsc{Case II.}  Let $G_1$ be the union of any connected components of $G$ abutting $x=0$.  Let $G_2 = G\setminus G_1$.  Let $P_i = \sH^1_\beta(\partial G_i)$.  Then $G_1$ does not abut $x=\ell(\Gamma)$ and $G_2$ does not abut $x=0$.  We know that $P_i> 0$ and $G_i \ne \emptyset$ for at least one $i$, because $R^2 = O(\lambda)$ and \eqref{eq:threadnearwire:AE} prevents $F$ from being all of $T_{R}$.  We have by Lemma \ref{lem:wtd_iso} with $m= 4\kmax + O(R^2)$ and $Y=R$,
\begin{eqnarray*}
A(G_i) &\le& \begin{cases}
	P_i R + O(R^2), & P_i < \pi R(1-mR),\\
	P_i R - \frac{\pi R^2}{2} + O(R^2), & \text{else}
	\end{cases}
\end{eqnarray*}
We must have at least one of $G_i$ be non-empty and fall into the second case; otherwise the area of $F$ will be too large relative to the reference competitor.  This gives
$$A(G) \le PR - \frac{\pi R^2}{2} + O(R^2) = \ell(\Gamma)R - \kmax^\n\lambda(1+\e_*) + O(\lambda)$$
where $P = \sH^1_\beta(\partial G) = \ell(\Gamma)-\lambda$.  On the other hand, by lemma \ref{lem:bigclaim},
$$A(G) \ge \ell(\Gamma)R - \kmax^\n \lambda + o(\lambda) + \e_5 + \ell(\Gamma)\e_6$$
Given any $\e_*$, we may pick the $\e$'s small enough to expose a contradiction.  This proves Case II.  

\textsc{Case I} follows similarly; in this case we observe that $\tau_2$ must reach $y=R_2$ by travelling up at least one of $x=0$ or $x=\ell(\Gamma)$.  Without loss of generality, assume the first.  Then we may apply the argument of Case I with $G_2 = G, G_1 = \emptyset.$

Now the finite crescent Alt competitor $M' \subset M$ was chosen arbitrarily subject only to the length rule \eqref{eq:Mprimelen}.  As such we may guarantee that $M'$ includes any given crescent.  In this way, we have shown that no crescent $X$ of $M$ has $X|_{U_X}$ touching the jointed pipe surface $P_{R_1}$.  

We now show that in fact each $X\in M$ has free thread lying in $N_{R_0}$.  At this point, the only possible problem is that the thread of $X$ might ``escape out the ends'' $D_1 := D_{R_0}(\Gamma,0)$, $D_2 := D_{R_0}(\Gamma,\ell(\Gamma))$ of our jointed pipe enclosure $Q_1$.  We show that this is impossible, as follows.  Consider the open set $Q^* = \Delta\setminus U_X$.  The free thread outside the $Q_1$ enclosure is parametrized on $\partial_+ \Delta \cap Q^*$.  Let $C$ be an arc in this closed set.  By Claim \ref{clm:nomeet}, there are two possibilities:
\begin{enumerate}
	\item $X(\partial C)$ both lie in the same disc $D_i, i=1,2$.  Then let $f:\R^3 \to R$ be the affine function containing $D_i$ in its level set and with gradient pointing out of $Q_1$.  Take the connected component $C^*$ of $X^\n(\{ f \ge 0\})$ containing $C$.  Modify $X$ on $C^*$ by orthogonally projecting to $f=0$.  This strictly reduces the Dirichlet energy of $X$ and respects the fixed boundary condition.  This contradicts the minimality of $M$.
	\item $X(\partial C)$ lie in different discs $D_1,D_2$.  Pick $R'$ small enough relative to the geometry of $\Gamma$ so this section's arguments work for both $R=R'$ and $R=2R'$.  We will find ourselves in Case II the first time.  We can then see in the $R=2R'$ run that $F$ has area at least $\ell(\Gamma) R'$ and so $M$ loses to our model competitor $P_0$.
\end{enumerate}

\subsection{Entire minimizer lies near wire}

In the previous section we showed that the thread lies in a tubular neighborhood of the wire.  In this section we show that the entire thread-wire surface lies in a slightly larger tubular neighborhood.  A general lemma about minimal surfaces suffices.

\begin{lemma} \label{lem:nearwire}
Let $X \in H^1(\Delta^o,\R^3) \cap C^0(\Delta)$ parametrize a minimal surface conformally and harmonically:
$$\Delta X = 0 \qquad \innp{X_{x_1}}{X_{x_2}} = 0 \qquad |X_{x_1}| = |X_{x_2}|.$$
Let $\Gamma:[0,\ell(\Gamma)]\to \R^3$ be a $C^2$ embedded curve, parametrized by arclength.  Assume that $X(\partial \Delta)$ lies in an $r_1$ tubular neighborhood of $\Gamma$.  Then if $D(X) \le \alpha$, the entire surface $\im X$ lies in an $r_2$ tubular neighborhood of $\Gamma$,
for
$$r_2 = C_1 r_1 + C_2 \alpha^{2/3} + o(\alpha^{2/3}).$$
\end{lemma}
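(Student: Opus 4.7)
The plan is to combine the monotonicity formula for (possibly branched) minimal surfaces with the area--Dirichlet energy identity that comes from conformality.

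First, because $X$ is conformal and harmonic, its image is a branched minimal immersion and the area of the image equals the Dirichlet energy: $A(X) = \tfrac{1}{2}\int(|X_x|^2+|X_y|^2)\,dx\,dy = D(X) \le \alpha$. Next, for any interior point $p \in X(\Delta^o)$ and any radius $r \le \text{dist}(p, X(\partial\Delta))$ (distance in $\R^3$), the ball $B_r(p)$ is disjoint from the boundary of the minimal surface. The classical monotonicity formula then yields $A(X \cap B_r(p)) \ge \pi r^2$; this extends to branch points, since the area density at a branch point of multiplicity $k$ is exactly $k\pi \ge \pi$. Combining with the area bound forces
$$\text{dist}(p, X(\partial\Delta)) \le \sqrt{\alpha/\pi}.$$

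Then, by the hypothesis $X(\partial\Delta) \subset T_{r_1}(\Gamma)$ and the triangle inequality in $\R^3$, every point $p$ in the image satisfies $\text{dist}(p,\Gamma) \le r_1 + \sqrt{\alpha/\pi}$. Since $p$ was arbitrary, $\text{im}(X) \subset T_{r_2}(\Gamma)$ with $r_2 = r_1 + \sqrt{\alpha/\pi}$, which fits the claimed form (the $\sqrt{\alpha}$ scaling is actually stronger than the stated $\alpha^{2/3}$ in the small-area regime, but both can be absorbed into the constants $C_1,C_2$ in the application regime where $r_1$ and $\sqrt\alpha$ are comparable, as in Theorem~\ref{thm:nearwire}). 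For the conclusion to be interpretable as a genuine tubular neighborhood we need $r_2$ below the reach of $\Gamma$; the $C^2$ hypothesis gives reach at least $1/\|\Gamma''\|_\infty$, so smallness of $r_1$ and $\alpha$ relative to the $C^2$ data of $\Gamma$ guarantees this.

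The main technical point to be careful about is the invocation of monotonicity at branch points of $X$, both in the interior of $\Delta$ and on $\partial_+\Delta$ where Hildebrandt--Dierkes--Lewy permit even-order branch points. This is handled by the generalized monotonicity for branched minimal immersions of finite order (e.g.\ via Federer's density argument, or Osserman's classification of branch points). No special structure of the thread--wire problem is needed beyond the conformal-harmonic regularity assumed in the hypotheses.
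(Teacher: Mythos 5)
Your proof is correct, and it takes a genuinely different — and in fact sharper — route than the paper. The paper bounds the excursion of the surface by a coarea argument applied to the pulled-back distance function $g = f\circ X$ (where $f$ is distance to a piecewise-linear approximation $\hat\Gamma$ of $\Gamma$): the portion of the surface beyond a level $c > r_1$ has all its boundary on $\{f=c\}$, hence lies (by the classical convex-hull property of minimal surfaces) in the convex hull of that level set; a Taylor-expansion claim then shows the level set must have length $\ell \gtrsim \sqrt{\fmax - c}$ in order for the convex hull to reach out as far as $\fmax$, and integrating $\ell$ over $c \in (r_1, \fmax)$ against the coarea formula gives $\alpha \gtrsim (\fmax - r_1)^{3/2}$, hence $r_2 = r_1 + C(\Gamma)\alpha^{2/3}$. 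Your argument instead invokes the monotonicity formula for the stationary integral varifold pushed forward by the conformal harmonic map, using $A(X)=D(X)\le\alpha$ and density $\ge\pi$ (with the correct observation that interior branch points only increase the density) to get $\operatorname{dist}(p, X(\partial\Delta)) \le \sqrt{\alpha/\pi}$ directly, and hence $r_2 = r_1 + \sqrt{\alpha/\pi} = r_1 + O(\alpha^{1/2})$, which is strictly better than $\alpha^{2/3}$ in the small-$\alpha$ regime. The monotonicity route is shorter and avoids both the piecewise-linear approximation lemma and the separate convex-hull claim; what the paper's argument buys in exchange is elementarity (it stays within the smooth-parametrization framework and never needs the varifold formalism). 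One small point of hygiene in your write-up: the requirement that $r_2$ stay below the reach of $\Gamma$, so that ``$\operatorname{dist}(\cdot,\Gamma)\le r_2$'' actually identifies the $r_2$-tubular neighborhood, is indeed needed, and your appeal to the $C^2$ hypothesis for a reach bound is the right way to dispatch it.
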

\begin{proof}
Let $\kmax$ be the max of $|\Gamma''|$.
\begin{claim}\label{clm:CH} If $\gamma$ is a closed curve of length $2\ell$ in $N_r(\Gamma)$ then
$$\CH(\gamma) \subset N_s(\Gamma)$$
where
$$s = r + C(\Gamma) \ell^2 + o(\ell^2)?$$
\end{claim}
\begin{proof}
	Use Taylor's theorem to see that having a point in $\CH(\gamma)$ distance $a$ outside $N_r(\Gamma)$ requires points of $\gamma$ appearing in $N_r(\Gamma)$ at distance at least $C(\Gamma) \ell^2$ apart, measured within $N_r(\Gamma)$.
\end{proof}

\newcommand{\Nhat}{\hat{N}}
Let $\Ghat_\e$ be the piece-wise linear curve provided by Lemma \ref{lem:pipe}.  For $r < R_0(\Gamma)-o(\e)$, the pipe surface $P_r(\Ghat_\e)$ is a piece-wise real-analytic manifold, consisting of pieces of cylinders and two discs.  Adjacent pieces of cylinders correspond to adjacent segments of $\Ghat$.

	Let $f(p) = d(p,\Ghat)$.  Consider $g = f \circ X.$  It attains some maximum $\fmax$.  Our goal is to bound that maximum.  Now for every $0 < c < M := \min(R_0/2,\fmax),$ consider the level set $f^\n(\{c\})$.  This set is a finitely piecewise real-analytic curve.  If its total length of is $\ell$, then we can find a loop in the level set of length $\le \ell$ and we get by Claim \ref{clm:CH} that $\fmax \le c + C(\Gamma) \ell^2 $.  In other words, $\ell$ being small forces $\fmax$ to be small.  On the other hand, if these lengths stay large, then they force large area.  Informally,
	\begin{eqnarray}\label{eq:bigalpha}
		\alpha \ge \int_{r_1}^{M} \sH^1(f^\n(c)) \, dc & \ge& \int_{r_1}^{M} C(\Gamma) \sqrt{\fmax - c} \, dc\\
		\nonumber&\ge& C(\Gamma) \int_{r_1+\fmax-M}^{\fmax} \sqrt{\fmax-c} \, dc \\
		\nonumber&=& C(\Gamma) \int_{0}^{M-r_1} \sqrt{u} \, du
		= C(\Gamma) (M-r_1)^{3/2}
	\end{eqnarray}
	whence $M = \fmax$ because $\alpha$ is small compared to the geometry of $\Gamma$ and we get $\fmax = r_1 + C(\Gamma)\alpha^{2/3}.$
	To complete our proof, we justify \eqref{eq:bigalpha}.
	Let $U = X^\n(\overline{N_{M} \setminus N_{r_1}})$.  At almost every $p\in U$, the the surface at $X(p)$ meets the level set of $f$ transversely at a smooth portion of the level set, locally along a curve $\sigma$ with unit cotangent $\sigma^*$ at $X(p)$.  We have at $p \in U$,
$$(\cos \theta) X^*(\dvol \R^3) = \theta X^*(df) \wedge X^*(\sigma^*)$$
as $2$-covectors at $p$ where $\theta$ is the angle between the tangent plane to $X$ and the tangent plane to the level set of $f$.  This justifies the first inequality in \eqref{eq:bigalpha}.  The second inequality follows from Claim \ref{clm:CH} as described above.
\end{proof}

\subsection{Proof of weighted isoperimetric inequality}\label{sec:isoproof}
We will prove Theorem \ref{thm:nearwire} by relating a near-wire minimizer to a region in a long planar rectangle which must obey isoperimetric inequalities.  In our method, a weighted isoperimetric inequalithy arises naturally.  Without weighted isoperimetry, we are only able to show the theorem for wires with small total curvature.

\begin{figure}
\picholder{3in}{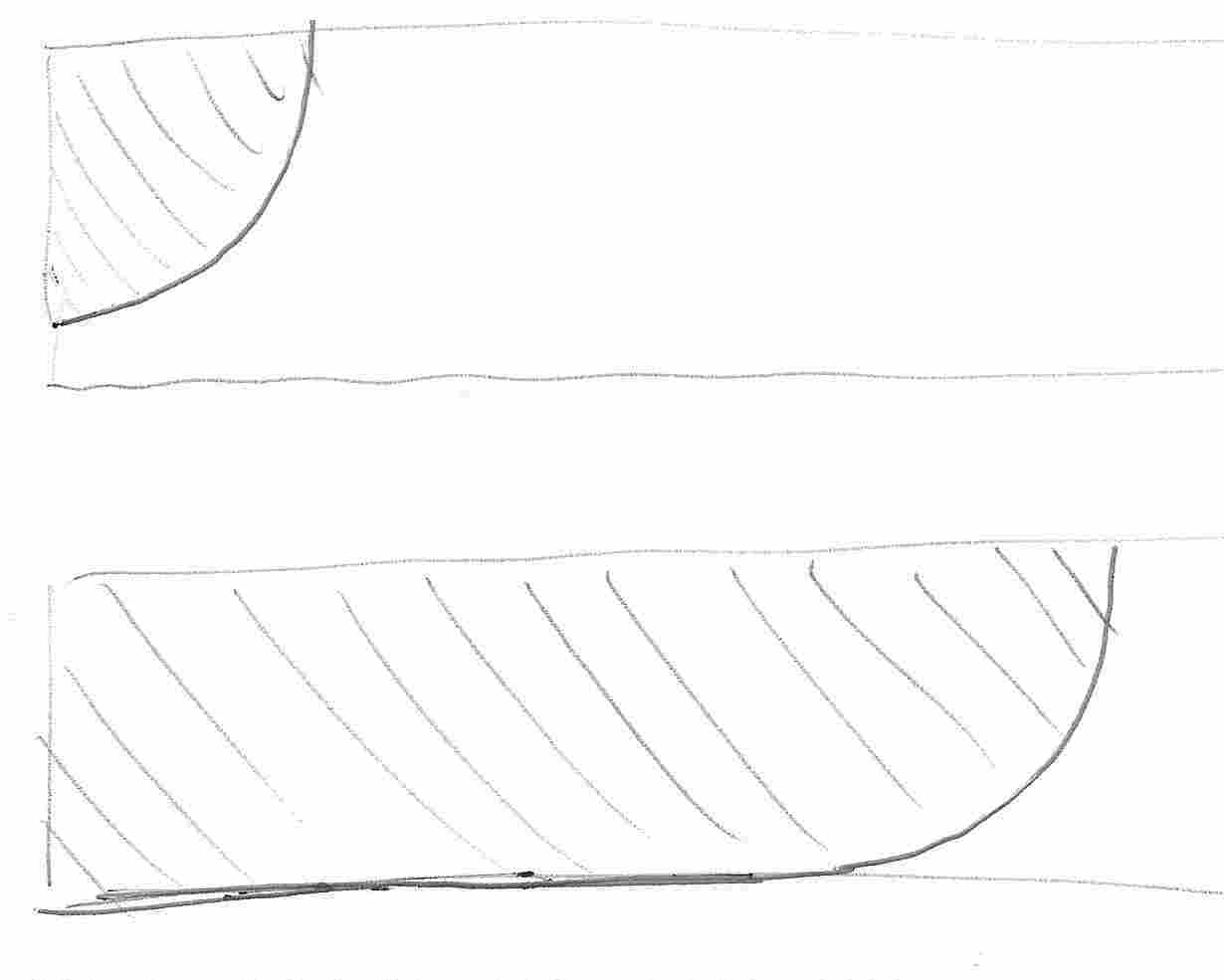}
\mycaption{Sharp examples in $m=0$ case.}
\label{fig:iso:}
\end{figure}

\begin{lemma}\label{lem:wtd_iso} Let $K \subset T_Y := \{(x,y)\st x\ge 0, 0 \le y < Y\}$ be a set with finitely piece-wise linear boundary.  Let $P$ be the $(1-my)$-weighted perimeter of $K$ in $T_Y$, where $0 \le mY < 1$.
\begin{equation}\label{eq:wtd_iso1}        
	A(K) \le \frac{P^2}{\pi}(1-mY)^{-2}.
\end{equation}
Moreover, if $P > \pi Y(1-mY)$,
\begin{equation}\label{eq:wtd_iso2}
	A(K) \le \left(P - \frac{\pi Y}{1-mY}\right)Y + \frac{\pi Y^2}{2}
\end{equation}
When $m = 0$ these are sharp; equality is achieved by the half disc in \eqref{eq:wtd_iso1} and by the half rounded-rectangle in \eqref{eq:wtd_iso2}.  See \figref{fig:iso:}.
\end{lemma}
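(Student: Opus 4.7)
The approach is to reduce the weighted inequality to the classical ($m=0$) case via the trivial weight bound $(1-my)\ge 1-mY$, and then to prove the classical case by reflecting across $x=0$ and applying the standard isoperimetric inequality for sets trapped in a horizontal slab with one free boundary.

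\emph{Step 1: Reduce to $m=0$.}  Since $1-my\ge 1-mY$ pointwise on $T_Y$, the unweighted perimeter $P_0:=\sH^1(\partial K\cap T_Y^o)$ satisfies $P_0\le P/(1-mY)$.  Hence any $m=0$ inequality of the form $A(K)\le \Phi(P_0,Y)$ immediately yields $A(K)\le \Phi(P/(1-mY),Y)$; the inequalities \eqref{eq:wtd_iso1} and \eqref{eq:wtd_iso2} correspond exactly (respectively, essentially up to bookkeeping the weight's effect separately on the circular-cap vs.\ straight portions of the extremal boundary) to the $m=0$ versions
$$A(K)\le P_0^2/\pi,\qquad A(K)\le (P_0-\pi Y)Y+\pi Y^2/2 \text{ when }P_0>\pi Y.$$

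\emph{Step 2: Reflect across $x=0$.}  For the $m=0$ case, let $K':=K\cup\{(-x,y):(x,y)\in K\}$.  Then $K'\subset \R\times[0,Y)$, with $A(K')=2A(K)$, and since any portion of $\partial K$ on $\{x=0\}$ becomes interior to $K'$, we have $\sH^1(\partial K'\cap (\R\times(0,Y)))=2P_0$.

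\emph{Step 3: Slab isoperimetric inequality with one free side.}  I would now establish, for any set $E$ contained in the horizontal slab $H_Y:=\R\times[0,Y)$ with $P_E:=\sH^1(\partial E\cap \R\times(0,Y))$,
\begin{equation*}
A(E)\le P_E^2/(2\pi)\qquad\text{and}\qquad A(E)\le (P_E-2\pi Y)Y+\pi Y^2\ \text{when}\ P_E>2\pi Y,
\end{equation*}
with sharp examples an upper half-disc of radius $\le Y$ and an upper half-stadium (rectangle $[a,b]\times[0,Y]$ capped at each end by an upper half-disc of radius $Y$), respectively.  I would prove these by Steiner-symmetrizing $E$ about a vertical axis (preserving area, not increasing perimeter); this reduces to the case $E=\{(x,y):|x|\le h(y),\ 0\le y<Y\}$ for a BV profile $h\ge 0$.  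One then directly compares $\int_0^Y h(y)\,dy$ to the two explicit extremal profiles, e.g.\ by a convex rearrangement of the graph of $h$ onto the profile of the corresponding half-disc / half-stadium, using that among all arcs from a fixed height to the boundary $\{y=0\}$ with prescribed length, the one enclosing the most area with that boundary is a circular arc.

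\emph{Step 4: Assemble.}  Substituting $A(K')=2A(K),\ P_E=2P_0$ into Step~3 gives the $m=0$ bounds on $A(K)$ in terms of $P_0$; Step~1 then converts these to bounds in terms of $P$.

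\emph{Main obstacle.}  The hardest step is Step~3: obtaining both sharp constants (small-perimeter regime \emph{and} stadium regime) in the slab isoperimetric inequality with a free side.  Steiner symmetrization reduces the problem to analyzing profiles $h$, but the two regimes arise from genuinely different extremals, and care is needed at the transition $P_E=2\pi Y$.  A secondary subtlety (flagged in Step~1) is tracking where the weight is ``spent'' on the extremal boundary to recover the exact $(1-mY)^{-1}$ and $(1-mY)^{-2}$ factors in the stated inequalities rather than a naive $P_0\le P/(1-mY)$ substitution, which may require decomposing $\partial K$ into horizontal-transverse and tangential portions before applying the weight bound.
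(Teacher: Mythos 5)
Your proposal is correct in overall spirit but takes a genuinely different route from the paper, and the route you chose is harder than the one the paper actually uses.

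For \eqref{eq:wtd_iso1}, the paper simply reflects $K$ across \emph{both} $x=0$ and $y=Y$ and invokes the classical planar isoperimetric inequality $A(E)\le \Per(E)^2/(4\pi)$; no ``slab'' inequality is needed.  You instead reflect only across $x=0$ and then appeal to an isoperimetric inequality in the strip $\R\times[0,Y)$ with one free side, which you still have to prove.  For \eqref{eq:wtd_iso2}, the paper's argument is entirely different from Steiner symmetrization: it picks a cutoff $x^*$ so that the weighted perimeter to the right of $x^*$ is exactly the threshold $\pi Y(1-mY)$, disposes of $K\cap\{x>x^*\}$ by the first inequality, and then bounds $A(K\cap\{x<x^*\})$ by a one--dimensional Fubini/co-area slicing argument.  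The key technical input is the elementary one-dimensional weighted isoperimetric inequality
$$\sH^1(J)\le Y\int_{\partial J}(1-my)\,d\sH^0,\qquad J\subset[0,Y)\ \text{a finite union of closed intervals,}$$
proved by pushing interval endpoints down at unit speed and observing that the functional only increases (at rate $1-mY>0$).  Integrating this in $\bar x$ and comparing with the projection of $\partial K$ onto the $x$-axis gives $A(K\cap H)\le Y\cdot\sH^1_\beta(\partial K, H)$.  This keeps the weight in the argument the whole way through and never invokes symmetrization or an explicit extremal profile; it only needs the classical planar isoperimetric inequality once (for the $\{x>x^*\}$ piece).  Your Steiner-symmetrization route is viable, but it buys nothing here and costs a lot: establishing the two-regime slab inequality with the exact constants (half-disc vs.\ half-stadium, and the transition between them) is, as you flag yourself, the heart of the matter and your sketch of ``convex rearrangement of the graph of $h$ onto the extremal profile'' does not constitute a proof.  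The slab inequality would need either a careful calibration/Dido-type argument with explicit attention to whether the comparison arc respects the wall $y=Y$, or a citation to a precise reference; the gesture at a circular-arc Dido problem does not by itself handle the stadium regime.

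Two further remarks.  First, on your Step~1 reduction: for \eqref{eq:wtd_iso1} the crude bound $P_0\le P/(1-mY)$ is indeed sufficient, and for \eqref{eq:wtd_iso2} it is sufficient \emph{when} $P_0>\pi Y$; but there is a boundary case with $P_0\le\pi Y$ while $P>\pi Y(1-mY)$ which you would need to handle by appealing to \eqref{eq:wtd_iso1} plus a separate algebraic check, so ``bookkeeping'' is the right word but it is not free.  The paper avoids this entirely because its slicing argument never separates the weight from the geometry.  Second, be aware that the constants in \eqref{eq:wtd_iso2} (and the $\pi Y^2/2$ appearing in the proof) do not actually match the stated half-rounded-rectangle extremal; the abstract notes that constants in this paper were later corrected, so do not invest effort trying to make your symmetrization argument reproduce these exact numbers.
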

	
\begin{proof}
	If $E$ is a Caccioppoli subset (in our case a finite polygon) in the plane then
\begin{eqnarray}\label{eq:nearwire:halfiso}
	A(E) \le \frac{1}{4\pi}\Per(E,\R^2)^2.
\end{eqnarray}
which is an equality for discs.  Equation \eqref{eq:wtd_iso1} follows from \eqref{eq:nearwire:halfiso} after extending $K$ by reflection across $x=0$ and $y=Y$.

Let $x^* \ge 0$ be the unique value such that
\begin{eqnarray*}
	\Per(K,T_Y \cap \{ x > x^*\}) &\le& \pi Y(1-mY)\qquad \text{and}\\
	\Per(K,T_Y \cap \{ x < x^*\}) & \le &P - \pi Y(1-mY).
\end{eqnarray*}

Then we may repeat the argument above to conclude that
$$A(K \cap \{ x > x^*\}) \le \pi Y^2/2.$$

Our question now reduces to showing that on $H := T_Y \cap \{x < x^*\}$
$$A(K \cap H) \le \Per(K, H) Y.$$

As an aid, we use a one-dimensional isoperimetric inequality:

\begin{claim} Given $J \subset [0,Y)$ which is union of finitely many relatively closed intervals,
	$$\int_J 1\, d \sH^1 \le Y \int_{\partial J} 1-my \, d \sH^0.$$
\end{claim}
\begin{proof}
Showing the claim means showing the non-positivity of
$$\varphi(J) = \int_J 1\, d \sH^1 - Y\int_{\partial J} 1-my \, d \sH^0 .$$
Given any connected component $[y_1,y_2]$ making up $J$, you can decrease $y_1$ at unit speed and increase $\varphi$ at rate $1 - mY > 0$.  Do this for all segments of $J$ until endpoints pile up.  Delete endpoints that have collided.  This reduces our question to considering intervals of the form $[0,b]$.  We have
$$
\varphi([0,b]) = b - Y(1 + 1-mb) = (1+mY)b - 2Y < 2(b - Y) \le 0.
$$
So we took an arbitrary $J$, possibly modified it in a way that only increases $\varphi$ and got a non-positive value.
\end{proof}

We may now show our result:
\newcommand{\xbar}{\underline{x}}
\begin{eqnarray*}
	\int_{K \cap H} 1 &\le& \int_{\xbar=0}^{x^*} \, d\xbar \, \int_{K \cap \{x = \xbar\}} 1 \, dy \\
	&\le& Y \int_{\xbar=0}^{x^*} \int_{\partial(K \cap \{x = \xbar\})} 1-my \, dy \\
	&\le& Y H^1_{\beta}(\partial (K \cap H),H)
\end{eqnarray*}

\end{proof}
%-------------------------------------------
\section{How planes may intersect crescents}\label{sec:plane_cresc}
%--------------------------------------------
\newcommand{\harmsl}{\lemLname{lem:harm_level_set:}}

In this section we investigate what the intersection between a plane and an Alt minimizer can look like.  Essentially, we show that if a connected component of the intersection contains finitely many wire points, then that component has the structure of a finite graph.  Moreover, this graph can only touch the thread curve in one point.  We state the full lemma below.  The full statement is more technical.  It only requires knowledge about how a compact piece of the plane intersects the wire.  Moreover, there is a technical issue which arises in the case that the free thread curvature is zero.

\begin{lemma} Let $\Gamma$ be an embedded nonplanar $C^1$ wire curve.  Let $V$ be a plane in $\R^3$.
\begin{enumerate}
\item[$(a)$] Let $W$ be a compact subset of $V$ which $\Gamma$ intersects at most a finite number $m$ times.
\item[$(b)$] Let $(X,\phi_-)$ be an Alt crescent with $\im X$ disjoint from $\partial_V W$ (the boundary of $W$ in the topology of $V$).
\end{enumerate}
(See \figref{fig:plane_cresc:intg}.)  Then the pre-image $X^\n(W)$ has at most $m$ connected components.  Each connected component is either
\begin{enumerate}
	\item a single point of $\partial_-\Delta$, or
	\item a finite tree graph
		\begin{enumerate}
			\item with all interior nodes having even valence of at least $4$,
			\item with at least one node on $\partial_- \Delta$,
			\item with at most one node on $\partial_+ \Delta$.
		\end{enumerate}
	\item a set containing all of $\partial_+ \Delta$.  Moreover, in this case, the free thread curvature of $(X,\phi_-)$ is zero.
\end{enumerate}
	In particular, if $\kappa >0$ and $m=0$, then the Alt minimizer does not touch the set $W$.  If $\kappa > 0$ and $m\le2$, then the pre-image $X^n(W)$ has no interior nodes; this implies that the interior of the Alt crescent ($X|_{\Delta^o}$) never oscullates the set $W$.
\end{lemma}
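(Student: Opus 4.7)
The plan is to reduce to the zero set of a harmonic function and apply the classification from Section~\ref{sec:harm_level_set} together with the maximum principle. Choose an affine function $f:\R^3\to\R$ with $V=f^{-1}(0)$ and set $u:=f\circ X$. Because $X$ is harmonic on $\Delta^o$ and $f$ is affine, $u$ is harmonic on $\Delta^o$ and continuous on $\Delta$, with $X^{-1}(V)=u^{-1}(0)$. I would first verify $u\not\equiv 0$: otherwise $X(\Delta)\subset V$, so the wire arc $\Gamma(\phi_-(\partial_-\Delta))$ lies in $V$, contradicting the nonplanarity of $\Gamma$.

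Applying the Rad\'o-type classification from Section~\ref{sec:harm_level_set}, $u^{-1}(0)\cap\Delta^o$ is a real-analytic $1$-manifold away from the critical points of $u$, and at each critical point consists of $2n$ half-arcs ($n\ge 2$) meeting with even valence. The maximum principle forbids Jordan cycles: a cycle in $u^{-1}(0)\cap\Delta$ would bound a subregion with $u\equiv 0$ on the boundary, forcing $u\equiv 0$. So every component of $u^{-1}(0)$ is a finite tree (possibly degenerating to an isolated boundary point; a nonconstant harmonic function in two dimensions has no isolated interior zero). Since $W$ is closed in $V$ and $\im X$ avoids $\partial_V W$, the set $X^{-1}(W)\subset u^{-1}(0)$ is clopen in $u^{-1}(0)$, so its components are precisely those components of $u^{-1}(0)$ whose $X$-image lies in $W$.

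The bound of $m$ then follows once I establish that every nondegenerate tree component carries at least one node on $\partial_-\Delta$: each such component uses up one of the at most $m$ points of $\Gamma\cap W$. The heart of the proof is thus the thread analysis. When $\kappa>0$, constant curvature and real-analyticity force the free thread to be an arc of a round circle in $\R^3$, which either lies in $V$ or meets $V$ transversely in $0$, $1$, or $2$ points. In the contained case, both the thread tangent and its curvature vector lie in $V$, so the crescent is tangent to $V$ along the whole thread; unique continuation of minimal surfaces then forces $X(\Delta)\subset V$, which matches case~(3) and by nonplanarity of $\Gamma$ forces $\kappa=0$.

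The main obstacle is to rule out two transverse thread-nodes $p_1,p_2$ lying in the same component $C$. The unique $C$-path from $p_1$ to $p_2$ together with the arc of $\partial_+\Delta$ between them bounds a Jordan region $R\subset\Delta$ on whose boundary $u$ vanishes on the $C$-portion and has constant sign on the $\partial_+\Delta$-portion. A careful comparison of the signs of $u$ on the two sides of the $C$-path, using the transverse thread crossings at $p_1,p_2$ and the maximum principle, should yield a sign contradiction. An analogous sign-propagation argument shows that a tree confined to $\partial_+\Delta\cup\Delta^o$ would drag the entire thread into $V$, which is excluded when $\kappa>0$; this gives the ``at least one node on $\partial_-\Delta$'' assertion.
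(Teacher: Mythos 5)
Your overall architecture is right and matches the paper's in outline: pull back the affine defining function of $V$ to a harmonic $u$ on $\Delta$, rule out $u\equiv 0$ via nonplanarity of $\Gamma$, invoke the maximum-principle/no-cycle classification of Section~\ref{sec:harm_level_set}, and observe that $X^{-1}(W)$ is open-and-closed in $u^{-1}(0)$ so it is a union of level-set components.  That much is all correct and aligned with the paper.

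The gap is in the thread analysis, and it is a real one.  You assert that ``constant curvature and real-analyticity force the free thread to be an arc of a round circle in $\R^3$,'' and then count at most two transverse intersections of that circle with $V$.  This is false: a space curve with constant Frenet curvature is not generally a circle, because its torsion is unconstrained.  In the thread problem the free thread satisfies $\gamma_{ss}=\kappa\,\nu(s)$ with $\nu$ the outward side-normal of the crescent, so the curvature vector rotates with the surface and the thread typically has nonzero, nonconstant torsion; it can cross a plane many times.  Consequently your mechanism for bounding $\#\bigl(u^{-1}(0)\cap\partial_+\Delta\bigr)$, and hence your route to ``at most one node on $\partial_+\Delta$'' and to the $\partial_-\Delta$-touching claim, does not go through.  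The paper sidesteps this entirely: rather than counting boundary zeros on $\partial_+\Delta$, it proves the stronger structural fact (conclusion $(ii)$ of Lemma~\ref{lem:harm_level_set:nocycle}) that no arc of the level set can start and end on $\partial_+\Delta$ with interior in $\Delta^o$.  The argument is minimal-surface-theoretic: such an arc would cut off a subdomain $U$ whose image is a minimal surface with boundary in $V$ and on the thread, a convex-hull/projection argument forces $X|_U$ to be planar, and analytic continuation then makes the whole crescent planar, contradicting nonplanarity of $\Gamma$.  Your proposal has no substitute for this step, and the sign-comparison sketch you offer at the end is too vague to stand on its own and in any case still presupposes circularity.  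You would need to replace the circle count by an argument of the paper's type (or another genuinely minimal-surface argument) to close the proof.

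Two minor points.  First, when $h$ is constant on $\partial_+\Delta$ you conclude that the crescent is planar; the paper instead uses the constancy to rule out $\kappa>0$ (via the expansion at a nonbranch point plus analytic continuation) and then invokes $\kappa\ge0$ (Lemma~\ref{lem:firstvar:poskappa}) to pin down $\kappa=0$, which is what conclusion $(iii)$ actually asserts.  Second, the Harmonic Level Set Lemma has a hypothesis $(a)$ about no strict local extrema on $\partial_+^o\Delta$; the paper explicitly notes that it suffices to verify conclusion $(ii)$ of the no-cycle lemma in its place, and you should say something to that effect rather than silently applying the classification.
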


In this lemma, case $(ii)$ is by far the most important.  Case $(iii)$ is only relevant in the special case where the free thread curvature is zero and the free thread consists of straight segments.

\begin{figure}
\picholder{5in}{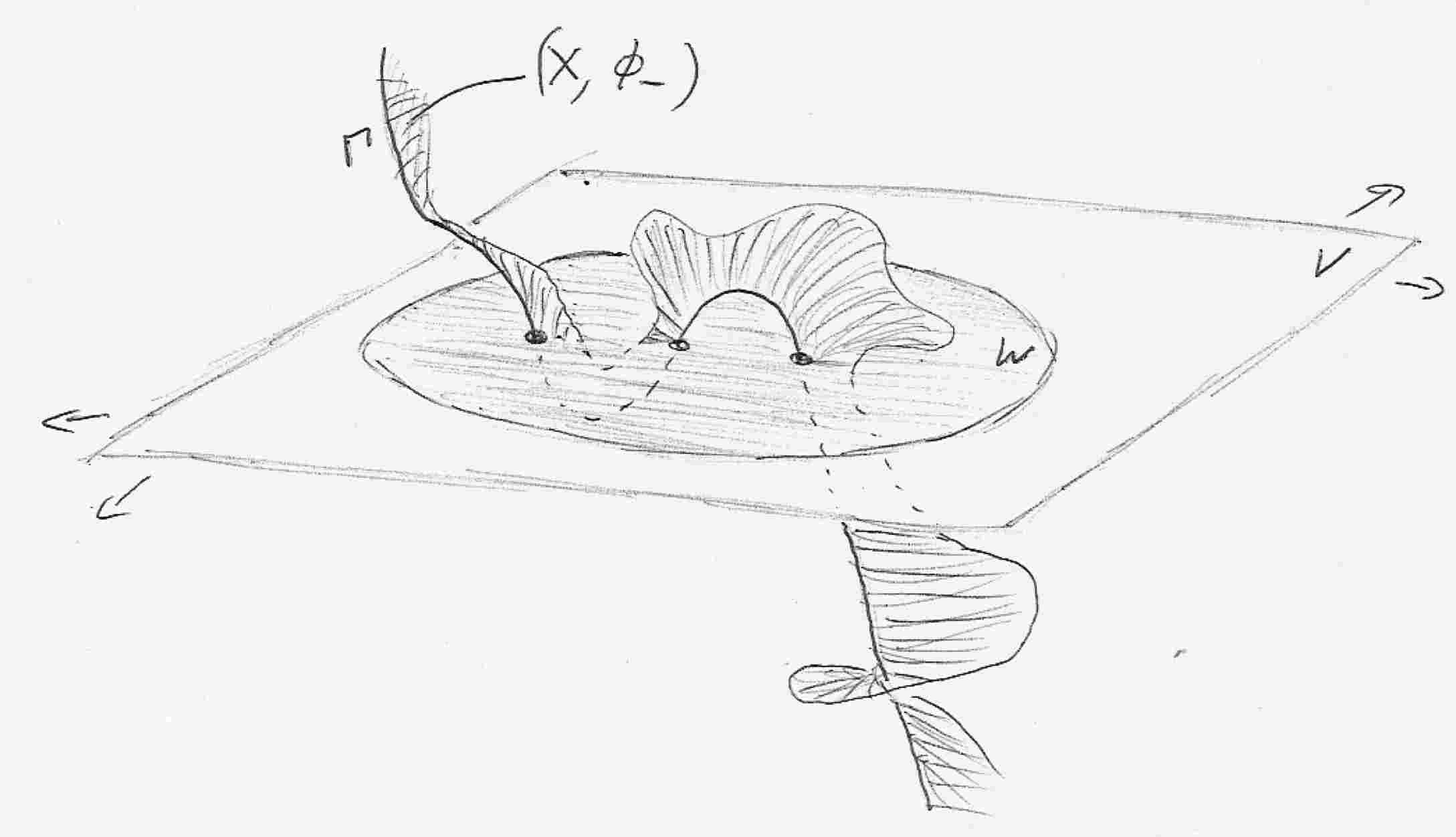}
\caption{Conditions for Lemma \ref{lem:plane_cresc:intg}.}
\label{fig:plane_cresc:intg}
\end{figure}

We may reduce the proof of this lemma to a statement about the level sets of harmonic functions on the unit disc.  Below we prove the relevant lemma.  Then we prove the \lemMLname{lem:plane_cresc:intg}.

\startevenpage
%--------------------------------------------
\section{Level sets of harmonic functions on the disc}\label{sec:harm_level_set}
%--------------------------------------------
\begin{wrapfigure}{o}{1.8in}
\DeltaNotation{plane_cresc}
\end{wrapfigure}

Consider a minimal surface $Y:\Delta \to \R^3$ spanning a contour.  Let us cut this surface with a plane, expressed as $F=a$ for $F$ a linear function on $\R^3$.  Assume that $Y$ is parametrized conformally and harmonically on $\Delta^o$, so as to minimize Dirichlet energy.  Then we may pull back a linear function $F$ by $Y$ to obtain a function $F \circ Y$ on $\Delta$ which is harmonic on $\Delta^o$.  The intersection of the minimal surface with the plane is parametrized by $Y$ on the subdomain $F \circ Y = a$.

In this context, Rad\'o's lemma helps us understand intersections between planes and minimal surfaces.

\begin{lemma} 
\textbf{[Rad\'o's lemma]}\footnote{See \cite[p. 272]{dhkw1}.  For more about Rad\'o's work, see \cite{rado}.} If $h:\Delta \to \R$ is harmonic in $\Delta^o$ and if its derivatives vanish to orders $0,1,\ldots m$ at some point $p \in \Delta^o$, then $h$ changes sign on $\partial \Delta$ at least $2(m+1)$ times.
\end{lemma}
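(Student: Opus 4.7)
The plan is to reduce Radó's lemma to the known local structure of zeros of holomorphic functions and then globalize using the maximum principle. First, since $\Delta^o$ is simply connected, I would choose a holomorphic $f$ on $\Delta^o$ with $h = \mathrm{Re}(f)$. Adjusting $f$ by a purely imaginary constant, I may assume $f(p) = 0$. The hypothesis that all partial derivatives of $h$ through order $m$ vanish at $p$ forces all derivatives of $f$ through order $m$ to vanish at $p$, using the Cauchy--Riemann relations that express $\mathrm{Re}(f^{(j)}(p))$ and $\mathrm{Im}(f^{(j)}(p))$ as specific partials of $h$. Hence $f$ has a zero of some order $k \ge m+1$ at $p$, i.e.\ $f(z) = (z-p)^k g(z)$ with $g$ holomorphic and $g(p) \ne 0$. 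A local biholomorphic change of coordinates $w = (z-p)\,g(z)^{1/k}$ (well-defined near $p$ because $g(p) \ne 0$) conjugates $f$ to the model $w \mapsto w^k$, whose real part vanishes on $2k$ rays through the origin with the $2k$ intervening sectors carrying alternating signs.

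Having set up $2k \ge 2(m+1)$ alternating sign sectors near $p$, I next promote them into $2k$ distinct sign changes on $\partial \Delta$. Pick a point $q_i$ in the $i$-th sector at $p$, so $h(q_i)$ has sign $(-1)^i$, and let $C_i$ be the connected component of $\{h \ne 0\} \cap \Delta^o$ containing $q_i$, on which $h$ has constant sign $(-1)^i$. The maximum principle applied to $h$ (or $-h$) on $C_i$ forbids $C_i$ from being compactly contained in $\Delta^o$, so $\overline{C_i}$ meets $\partial \Delta$ at a point $r_i$ where, by continuity and the openness of $C_i$, $h$ takes values of the appropriate sign on a boundary arc accumulating at $r_i$. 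Adjacent sectors at $p$ carry opposite signs and so necessarily belong to distinct components $C_i \ne C_{i+1}$, which are therefore pairwise disjoint as open sets.

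The main obstacle is ensuring that the boundary points $r_1,\ldots,r_{2k}$ inherit the cyclic order of the sectors at $p$, so that traversing $\partial \Delta$ exhibits at least $2k$ sign changes. To handle this, I would choose, inside each $C_i$, a Jordan arc $\gamma_i$ from $p$ through the $i$-th sector out to $r_i \in \partial \Delta$; these arcs are pairwise disjoint away from $p$ because the $C_i$ are, and near $p$ they sit in distinct sectors. The union $\bigcup \gamma_i$ thus gives $2k$ disjoint Jordan arcs joining $p$ to distinct boundary points $r_i$, and a standard Jordan-curve argument (equivalently, the Riemann map of $\Delta \setminus \{p\}$) shows that the cyclic order of the $r_i$ on $\partial \Delta$ matches the angular order of the sectors at $p$. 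Since the signs $(-1)^i$ alternate in that order, $h$ must vanish between consecutive $r_i$ on $\partial \Delta$, yielding at least $2k \ge 2(m+1)$ sign changes, which is the desired conclusion.
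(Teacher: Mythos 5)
The paper itself does not prove Rad\'o's lemma: it is stated as a background fact with a footnote citing Dierkes--Hildebrandt--K\"uster--Wohlrab (p.~272) and Rad\'o, and the paragraph following it gives only an informal gloss (``the level set $h=0$ looks like a graph with no cycles, so an interior zero of high order propagates out to many boundary sign changes''). So there is no in-paper proof to compare against, and I will assess your argument on its own terms; note that your route via nodal \emph{domains} of $\{h\neq 0\}$ is dual to, but in the same spirit as, the paper's nodal-\emph{set} picture.

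Your outline --- complexify $h$, read off the local model $w\mapsto w^k$ from $f(z)=(z-p)^k g(z)$, then push the $2k$ alternating-sign sectors out to $\partial\Delta$ and read the cyclic order of the boundary traces --- is the right one, but there is a real gap in the step ``these arcs are pairwise disjoint away from $p$ because the $C_i$ are.''  What you actually establish is only that \emph{adjacent} sectors lie in distinct components of $\{h\neq 0\}$ (opposite signs).  As far as your argument shows, two non-adjacent sectors of the \emph{same} sign could lie in one and the same nodal domain, in which case the corresponding arcs $\gamma_i,\gamma_j$ might cross and the cyclic-order count would break.  In fact this cannot happen, but ruling it out takes another maximum-principle argument: if sectors $i$ and $j$ ($|i-j|\geq 2$, same sign) lie in one component, then a simple path in that component from $q_i$ to $q_j$, closed up through $p$ by short arcs inside sectors $i$ and $j$, is a Jordan curve on which $h\geq 0$ and vanishes only at $p$; the strong minimum principle gives $h>0$ in the enclosed region, yet that region contains a neighborhood of $p$ in at least one opposite-sign sector --- contradiction.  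A variant of the same argument is also what you need to see the $r_i$ are distinct, which you assert without justification.  Two smaller points: from $\overline{C_i}\cap\partial\Delta\neq\emptyset$ alone you only get $h\geq 0$ at the contact point, not strict sign --- to get a boundary point where $h$ is strictly of the correct sign (and hence, by continuity of $h$ on $\partial\Delta$, a whole boundary arc), apply the maximum principle once more to $h$ on $C_i$ using $h=0$ on $\partial C_i\cap\Delta^o$; and the statement should carry the classical caveat ``or $h\equiv 0$,'' which your factorization of $f$ silently assumes away.
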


Roughly speaking, the idea of Rad\'o's lemma is that the level set $h=0$ looks like a graph.  The graph cannot have cycles (closed loops) because then by the {Maximum Principle} an entire open set would have $h=0$, whence by analytic continuation $h$ vanishes on all of $\Delta$.  Since the graph does not have cycles, we expect that an interior zero of order $m+1$---which gives a node with valence $2(m+1)$---propogates outward to force at least $2(m+1)$ sign changes on the boundary.  The type of result we need is similar, but it has a special condition on the top boundary $\partial_+ \Delta$ of the unit disc.  There we assume that $h$ does not achieve any strict local extrema.  Under that assumption, we are able to guarantee a certain number of sign changes on the lower boundary $\partial_- \Delta$.

The following lemma characterizes a level set $h=a$ of a harmonic function $h$ on the unit disc $\Delta$.  It is written in a form that allows it to be applied when we know properties of $h$ only for a part $U$ of the level set.  To understand the essence of the lemma, the reader may find it helpful to read it in the case that $U$ is the entire level set.  Our final preparation is to clarify some notation in \figref{fig:plane_cresc:delta_notation} and the following definition.

\begin{definition}
	A \emph{(planar) graph} is a set of points $V$ (\emph{nodes})in $\R^2$ and a set of continuous curves (\emph{edges}) from $V$ to $V$.  We allow multiple edges to connect the same pair of nodes and to connect a node to itself.  The \emph{valence} of a node is the number of edges emanating from it.  We assume that every node has valence at least $1$.  A graph is a \emph{tree graph} if it is connected and simply connected.
\end{definition}

\begin{lemma} \textbf{[Harmonic Level Set]}\label{lem:harm_level_set:} Let $h\in C^0(\Delta,\R)$ be harmonic and real-analytic on $\Delta \setminus \partial_-\Delta$.  Let $U$ be a nonempty union of connected components of a level set $h=a$ on $\Delta$.  Assume that:
\begin{enumerate} 
\item[$(a)$] The function $h$ is nonconstant on $\partial_+ \Delta$ and does not attain any local extrema on the domain $\Delta$ at points of the set $\partial_+^o \Delta$.  In other words, for each point $p$ in $\partial_+^o \Delta$ and each neighborhood $\sN$ of $p$ in $\Delta$,
	$$\min_\sN h < h(p) < \max_\sN h.$$
\item[$(b)$] We have $h=a$ at only $m$ points of $U \cap \partial_- \Delta$.
\end{enumerate}
Then $U$ consists of at most $m$ connected components.  Each component $P$ is either a single point on $\partial_- \Delta$ or is a planar graph
\begin{enumerate}
	\item which is a finite tree graph,
	\item with all interior nodes having even valence of at least $4$,
	\item with at least one node on $\partial_- \Delta$,
	\item with at most one node on $\partial_+ \Delta$.
\end{enumerate}
\end{lemma}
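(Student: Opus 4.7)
The plan is to combine Rad\'o's maximum-principle trick with the no-local-extrema hypothesis (a) applied at a lens-boundary extremum. I would first establish the local analytic structure of the level set, then exclude cycles to obtain the tree property, and finally rule out a second $\partial_+\Delta$-node in any one component.

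For the local structure, near any zero $p$ of $h-a$ in $\Delta^o$ real analyticity expands $h-a = P_k + O(|x-p|^{k+1})$ with $P_k$ a harmonic homogeneous polynomial of degree $k\ge 1$. In two variables such a $P_k$ equals $A\cos(k\theta+\phi)$ in polar coordinates, whose zero set is $2k$ equiangular rays, so the level set near $p$ is a star of $2k$ analytic arcs; hence interior nodes have even valence $2k\ge 4$. Near $p\in\partial_+^o\Delta$ the analytic extension of $h$ past $\partial_+\Delta$ is still harmonic (the equation $\Delta h=0$ propagates analytically), so the same star holds; nonconstancy of $h$ on $\partial_+\Delta$ forbids any ray lying along $\partial_+\Delta$, so $p$ meets only finitely many level arcs in $\Delta$. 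Combined with (b) on $\partial_-\Delta$, compactness of $\Delta$, and the finiteness of singular sets of analytic varieties, this realizes $U$ as a finite planar graph.

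Next I would exclude cycles via Rad\'o. A simple cycle in a component $P$ is a Jordan curve $J\subset\{h=a\}\cap\overline\Delta$. Because the level set meets $\partial\Delta$ only in isolated points --- by (b) on $\partial_-\Delta$ and by analyticity plus nonconstancy on $\partial_+\Delta$ --- $J$ cannot contain any arc of $\partial\Delta$, and by convexity of $\overline\Delta$ the bounded component $R$ of $\R^2\setminus J$ sits inside $\overline\Delta$; openness of $R$ forces $R\subset\Delta^o$. Then $h$ is harmonic on $R$, continuous on $\overline R$, equal to $a$ on $\partial R=J$, so the maximum principle gives $h\equiv a$ on $\overline R$. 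Analytic continuation through the connected open set $\Delta\setminus\partial_-\Delta$ then forces $h\equiv a$ on $\partial_+\Delta$, contradicting nonconstancy. Hence each component of $U$ is a finite tree.

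To bound $\partial_+\Delta$-nodes per component by one, I would argue by contradiction. Given two such nodes $p_1,p_2\in P$, pick the pair minimizing tree-distance in $P$, so that the unique tree-path $\gamma\subset P$ between them meets $\partial_+\Delta$ only at its endpoints. Together with an arc $\sigma\subset\partial_+\Delta$ from $p_1$ to $p_2$, we get a Jordan curve $\gamma\cup\sigma$ enclosing a lens $R\subset\overline\Delta$. If $h\equiv a$ on $\sigma$, analyticity forces $h\equiv a$ on $\partial_+\Delta$ and contradicts (a); otherwise $h|_{\overline R}$ attains its max or min at some $q\in\sigma^o\subset\partial_+^o\Delta$, and because $\gamma$ stays away from $q$, a small half-disc neighborhood of $q$ in $\Delta$ lies in $\overline R$. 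So $q$ is a local extremum of $h$ on $\Delta$, contradicting (a). Combining with the tree property: each nontrivial component has leaves on $\partial\Delta$ with at most one on $\partial_+\Delta$, hence at least one on $\partial_-\Delta$; degenerate single-point components must also sit on $\partial_-\Delta$, since an isolated zero of $h-a$ at $\partial_+^o\Delta$ is excluded by the star picture. Each component therefore contributes at least one point to $U\cap\partial_-\Delta$, giving the bound of $m$ components. I expect the main obstacle to be this boundary-peak step: precisely identifying the lens $R$, verifying that its boundary abuts $\partial_+\Delta$ cleanly at the extremal point $q$, and handling the two corner points $\pm 1$ (which lie in both $\partial_+\Delta$ and $\partial_-\Delta$, and where (a) does not directly apply).
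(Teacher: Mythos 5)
Your outline follows the paper's proof strategy closely: local star structure from the power-series expansion of $h$ (giving interior nodes of even valence $\ge 4$), cycle exclusion via the maximum principle and analytic continuation, a lens argument using hypothesis $(a)$ to bound $\partial_+\Delta$-contact to one node per component (this is the paper's Lemma~\ref{lem:harm_level_set:nocycle}$(ii)$ in slightly different clothing), and a leaf count giving at most $m$ components.

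The one genuine gap is finiteness near $\partial_-\Delta$. Your appeal to ``finiteness of singular sets of analytic varieties'' does not apply there, since $h$ is only continuous up to $\partial_-\Delta$ rather than real-analytic; a priori the level set could have infinitely many branches accumulating near the bottom arc. The paper closes this with two steps that you would need to supply. First, an edge of the level set heading toward $\partial_-\Delta$ has a \emph{unique} limit point: if it had two, $q_1$ and $q_2$, hypothesis $(b)$ yields a transversal segment from $\partial_+^o\Delta$ to a point $q^*\in\partial_-^o\Delta$ between them with $h(q^*)\ne a$, and the edge would cross this segment infinitely often, yet these crossings cannot accumulate at $q^*$ (by continuity, since $h(q^*)\ne a$) nor in the interior of the segment (by real analyticity of $h$ there). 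Second, the paper exhausts $\Delta$ by discs $B(0,1-\e)$, observes that on each such disc the component $P_\e$ of the graph is finite and, once augmented by escape paths to $\partial\Delta$, has at most $m+1$ leaves (at most $m$ toward $\partial_-\Delta$, at most one toward $\partial_+\Delta$), and then uses the interior-valence $\ge4$ bound plus a tree vertex/edge count to get a bound on $P_\e$ that is uniform as $\e\to0$. With those two facts in hand, your leaf-counting argument closes exactly as you wrote it.
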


In this section we prove the Harmonic Level Set Lemma by proving several supporting lemmas.

\begin{lemma}\label{lem:harm_level_set:nocycle} Let $h\in C^0(\Delta,\R)$ be a nonconstant function which is harmonic and real-analytic on $\Delta \setminus \partial_-\Delta$.  Then
	\begin{enumerate}
		\item[$(i)$] the level set $h=a$ cannot contain a Jordan curve.
	\end{enumerate}
Moreover, if assumption $(a)$ of \lemLname{lem:harm_level_set:} holds then:
	\begin{enumerate}
		\item[$(ii)$] the level set $h=a$ cannot contain a curve $\gamma:[0,b] \to \Delta$ mapping $(0,b)$ to $\Delta^o$ and $0,b$ to $\partial \Delta$.
	\end{enumerate}
\end{lemma}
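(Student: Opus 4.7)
The plan for part (i) is the classical maximum-principle-plus-unique-continuation argument. Suppose for contradiction that a Jordan curve $J$ lies in the level set $\{h=a\}$. Its bounded complementary region $R$ in $\R^2$ is contained in $\overline{\Delta}$ --- since $J \subset \overline{\Delta}$ is bounded while the unbounded complementary region contains the exterior of $\Delta$ --- and being open in $\R^2$, the set $R$ cannot meet $\partial\Delta$, so $R \subset \Delta^o$. Applying the maximum principle to $h-a$ and to $a-h$ on $R$ (using $\partial R = J \subset \{h=a\}$) yields $h \equiv a$ on $\overline{R}$. Real-analyticity of $h$ on the connected open set $\Delta \setminus \partial_-\Delta$ propagates this to $h \equiv a$ throughout $\Delta \setminus \partial_-\Delta$, and continuity forces $h \equiv a$ on $\Delta$, contradicting the nonconstancy hypothesis.

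For part (ii), the plan is to turn $\gamma$ into a planar separator of $\Delta$ and then trap $h$ by the maximum principle, using assumption $(a)$ to kill the possibility of $h$-extrema at points of $\partial_+^o\Delta$. First replace $\gamma$ by a Jordan sub-arc with the same pair of boundary endpoints; if those endpoints coincide, the sub-arc closes up to a Jordan curve in $\{h=a\}$, ruled out by part (i). Otherwise $p_0 \ne p_1$, and the Jordan arc $\gamma$ separates $\Delta$ into two open regions $R_1, R_2$, each bounded by $\gamma$ together with an arc $A_i$ of $\partial\Delta$ joining $p_0$ to $p_1$.

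The core claim is that whenever the open arc $A_i^o$ is contained in $\partial_+^o\Delta$, we have $h \equiv a$ on $R_i$, at which point propagation exactly as in part (i) gives $h \equiv a$ on $\Delta$, contradicting nonconstancy on $\partial_+\Delta$. To prove the core claim: by the maximum principle the extrema of $h$ on $\overline{R_i}$ are attained on $\partial R_i = \gamma \cup A_i$, and on $\gamma$ the value equals $a$. An extremum attained at an interior point $q$ of $A_i$ would have $q \in \partial_+^o\Delta$; since $\gamma$ stays bounded away from $q$, some full $\Delta$-neighborhood of $q$ lies inside $\overline{R_i}$, so this would make $q$ a local extremum of $h$ on all of $\Delta$ at a point of $\partial_+^o\Delta$, contradicting $(a)$. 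Hence extrema on $A_i$ occur only at $p_0, p_1 \in \gamma$, where $h = a$, forcing $\sup_{R_i} h = \inf_{R_i} h = a$.

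The main obstacle is exhibiting an arc $A_i$ with $A_i^o \subset \partial_+^o\Delta$. This is immediate once both endpoints $p_0, p_1$ lie on $\partial_+\Delta$ --- take the arc of $\partial\Delta$ through the top of the disc --- which is the case in which (ii) is invoked downstream. (Taking $p_0, p_1$ arbitrarily on $\partial\Delta$ in the statement is actually too strong: $h(x,y) = x$ satisfies $(a)$, yet $\{h=0\}$ is the vertical diameter joining two points of $\partial\Delta$; so (ii) should be read with the tacit restriction that the endpoints lie on $\partial_+\Delta$.)
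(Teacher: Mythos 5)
Your argument for part $(i)$ (maximum principle applied to the Jordan interior, then unique continuation across the connected real-analytic region $\Delta\setminus\partial_-\Delta$) matches the paper's reasoning and is correct; the extra step verifying $R\subset\Delta^o$ is a welcome clarification the paper glosses over.

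Your treatment of part $(ii)$ is also the same strategy as the paper's --- the paper likewise forms the open region bounded by $\gamma$ and a sub-arc of $\partial_+\Delta$, invokes the maximum principle, and uses assumption $(a)$ to forbid extrema on $\partial_+^o\Delta$. Where you differ is in honesty about the hypothesis: you have correctly spotted that part $(ii)$ as literally written is \emph{false}. Your counterexample $h(x,y)=x$ is valid: it is nonconstant, harmonic and real-analytic everywhere, nonconstant on $\partial_+\Delta$, and at each point of $\partial_+^o\Delta$ the gradient $(1,0)$ is not normal to the circle, so tangential motion along $\partial\Delta$ increases and decreases $h$, verifying $(a)$; yet $\{h=0\}$ is the vertical diameter, a curve from $(0,-1)\in\partial_-^o\Delta$ to $(0,1)\in\partial_+^o\Delta$ with interior in $\Delta^o$. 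The paper's proof of $(ii)$ silently assumes that the open set $V$ ``formed by the curve and the top boundary'' has boundary $\gamma\cup A$ with $A^o\subset\partial_+^o\Delta$, i.e., that both endpoints of $\gamma$ lie on $\partial_+\Delta$; your counterexample shows this assumption cannot be dropped. You are also right that this is the only form ever invoked downstream --- in Lemma \ref{lem:harm_level_set:fullbound} and in the proof of Lemma \ref{lem:plane_cresc:intg}, $(ii)$ is used precisely to preclude paths with \emph{both} ends on $\partial_+\Delta$. So the statement should read ``$0,b\mapsto\partial_+\Delta$'', and with that correction your argument is complete and closes the gap the paper leaves implicit.

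One small stylistic remark: the reduction to a Jordan sub-arc at the start of your $(ii)$ argument is fine but deserves a word of justification (the image of $\gamma$ is a Peano continuum, hence arcwise connected), and you should note that the Jordan sub-arc cannot meet $\partial\Delta$ except at $p_0,p_1$ because it lies inside $\operatorname{im}\gamma$ which meets $\partial\Delta$ only there. The rest of the separation argument and the neighborhood-of-$q$ argument are sound.
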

\begin{proof}
	If the level set did contain a Jordan curve, then the interior of the Jordan curve would be an open set $V$ for which $h=a$ on $\partial V$.  Then by the {Maximum Principle}, we would have $h\equiv a$ on $V$.  By analytic continuation we get $h\equiv a$ on $\Delta$, contrary to assumption.  So the level set did not contain a Jordan curve in the first place.

	As for $(ii)$, we again have an open set $V$ formed by the curve and the top boundary $\partial_+ \Delta$.  We again apply the {Maximum Principle}.  To avoid the contradiction that threatened to occur in the previous paragraph, an extremal value other than $a$ must be attained by $h$ on $\partial V$.  This means that $\partial V$ attains an extremal value for $V$ on $\partial V \cap \partial_+^o \Delta$.  But then assumption $(a)$ of the \lemMLname{lem:harm_level_set:} has been violated.
\end{proof}

\begin{lemma}\label{lem:harm_level_set:graph} Let $h\in C^0(\Delta,\R)$ be harmonic and real-analytic on $\Delta \setminus \partial_-\Delta$.  Let $P$ be a connected component of the level set $h=a$ on $\Delta$ which intersects $\partial_- \Delta$ at only finitely many points.  Then $P$ is a planar graph in the following sense.  Let $N$ be the set of critical points of $h$ in $\Delta\setminus \partial_-\Delta$.  Let $B_-$ be the (finite) set of places where $h=a$ on $\partial_- \Delta$.  Let $B_+$ be the places where $h=a$ on $\partial_+^o \Delta$ and $\nabla h \ne 0$.  Let $E$ be the remainder of $P$.  Then $E$ is a disjoint union of continuous curves with ends in $N \cup B_ \cup B_+$.  The curves are real-analytic on their interiors, and they remain real-analytic up to any end points lying in $N \cup B_+$.  The valence of any node of $P$ in $\Delta^o$ is at least $4$.
\end{lemma}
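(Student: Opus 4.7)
The plan is to study the local structure of the level set $P$ at each of its points, then patch the pieces together into the claimed graph.

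First I would analyze the local picture. At any $p\in P\cap(\Delta^o\cup\partial_+^o\Delta)$ with $\nabla h(p)\ne 0$, the real-analytic implicit function theorem presents $P$ in a neighborhood as a single real-analytic arc through $p$. At a critical point $p\in N$ with $h(p)=a$ and $p\in\Delta^o\cup\partial_+^o\Delta$, I would use that on a simply connected open neighborhood $U$ (inside the domain where $h$ is real-analytic and hence harmonic), $h=\real f$ for some holomorphic $f$ on $U$; translating, I may assume $f(p)=0$. Let $k$ be the order of vanishing of $f$ at $p$; since $\nabla h(p)=0$ we have $k\ge 2$. From $f(z)=c(z-p)^k(1+O(z-p))$ with $c\ne 0$ we build a local biholomorphism $\zeta$ with $\zeta(p)=0$ and $f=c\zeta^k$, so $\{h=a\}$ near $p$ is the $\zeta$-preimage of $\{\real(c\zeta^k)=0\}$, the union of $k$ real lines through the origin meeting at angles $\pi/k$. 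Thus $P$ locally consists of $2k$ real-analytic arcs meeting at $p$, and the valence at any interior node is $2k\ge 4$.

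Next I would assemble the global structure. Since $P\cap\partial_-\Delta\subset B_-$, the set $E=P\setminus(N\cup B_-\cup B_+)$ lies in $\Delta^o\cup\partial_+^o\Delta$, and by the local analysis every point of $E$ is a regular point at which $P$ is a single real-analytic arc. Hence $E$ is an open $1$-manifold, so each connected component is either a topological circle or an arc homeomorphic to $\R$; the circle case is excluded by \lemLname{lem:harm_level_set:nocycle}$(i)$. Parametrize a component by $\gamma:\R\to E$ and let $A_+$ be the set of accumulation points of $\gamma(t)$ as $t\to+\infty$; it is compact and connected as the intersection of the nested connected compacta $\overline{\gamma([T,\infty))}$. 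A hypothetical $p\in A_+\cap E$ would, via the local arc through $p$ and the continuity of $\gamma^{-1}$ on a compact neighborhood of $p$ in $\gamma$, force some bounded $\gamma^{-1}$-preimage to contain arbitrarily large $t$, a contradiction. Hence $A_+\subset N\cup B_-\cup B_+$.

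The most delicate step is showing $A_+$ is a single point, since $N$ and $B_+$ could in principle accumulate at $\partial_-\Delta$. The key is that $N\cup B_-\cup B_+$ is countable: $N$ is discrete in $\Delta^o\cup\partial_+^o\Delta$ as the zero set of the nontrivial holomorphic $f'$; $B_+$ is the zero set on the real-analytic curve $\partial_+^o\Delta$ of the nontrivial restriction $h|_{\partial_+^o\Delta}-a$, hence discrete (in the degenerate case $h\equiv a$ on $\partial_+^o\Delta$, one checks from the Hopf-type analysis that $E=\emptyset$ and the conclusion is vacuous); and $B_-$ is finite by hypothesis. Any countable metric space is totally disconnected: for distinct $p,q$, pick a value $r\in(0,d(p,q))$ missing the countable image of $d(\cdot,p)$ and split along it. Therefore the compact connected set $A_+$ is a singleton in $N\cup B_-\cup B_+$; the same argument at $-\infty$ supplies the other endpoint.

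Finally, the real-analyticity claims follow from the same local pictures: the arcs of $E$ come from the implicit function theorem on the real-analytic domain $\Delta^o\cup\partial_+^o\Delta$ and so are real-analytic on their interiors, and at endpoints in $N\cup B_+$ the parametrizations extend real-analytically via the normal form $f=c\zeta^k$ at interior critical points (and by the implicit function theorem at boundary critical points in $N\cap\partial_+^o\Delta$ and at $B_+$). Real-analyticity is lost only at endpoints in $B_-$, where $h$ is assumed merely continuous.
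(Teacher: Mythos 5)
Your approach diverges from the paper's in an interesting way. The paper fixes $e\in E$, intersects $E$ with nested sets $W^{\epsilon}$ (where $|\nabla h|>\epsilon$ and all nodal neighborhoods are excised), follows the arc $C_e^{\epsilon}$ as $\epsilon\downarrow 0$, and pins down a unique terminus on $\partial\Delta$ by an intersection-counting argument against a transversal segment whose lower endpoint $q^*\in\partial_-^o\Delta$ satisfies $h(q^*)\ne a$. You instead observe that $E$ is an open $1$-manifold whose components are lines (circles are barred by \lemLname{lem:harm_level_set:nocycle}$(i)$), form the accumulation sets $A_\pm$ of a component parametrization $\gamma$, show $A_\pm\subset N\cup B_-\cup B_+$, and conclude $A_\pm$ is a single point because a compact connected subset of a countable metric space is a singleton. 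The local normal-form analysis ($f=c\zeta^k$, valence $2k\ge4$) is common to both; your topological endgame is cleaner than the transversal-counting.

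There is, however, a concrete error in your treatment of the degenerate case. The parenthetical claim that $h\equiv a$ on $\partial_+^o\Delta$ forces $E=\emptyset$ is false, and it is exactly where your countability step is load-bearing. Take $\tilde h(w)=\operatorname{Im}(w^3)$ on the closed upper half-disc, so $\tilde h\equiv0$ on the diameter $[-1,1]$ and the zero level set also contains the two rays from $0$ at angles $\pi/3$ and $2\pi/3$; transport by a Riemann map $\phi$ carrying the upper half-disc onto $\Delta$ with the diameter going to $\partial_+\Delta$, and set $h=\tilde h\circ\phi^{-1}$. Then $h\equiv0$ on $\partial_+\Delta$, $P$ is the whole level set and meets $\partial_-\Delta$ at four points, $E$ consists of two nonempty interior arcs from $\phi(0)\in\partial_+^o\Delta$ to $\partial_-\Delta$, and $B_+=\partial_+^o\Delta\setminus\{\phi(0)\}$ is an entire uncountable arc. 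So $N\cup B_-\cup B_+$ is not countable and your singleton argument collapses. The correct repair is not to argue $E=\emptyset$ but rather to note that if $h\equiv a$ on $\partial_+^o\Delta$ and $p\in B_+$ (so $\nabla h(p)\ne0$), then by the implicit function theorem the full level set near $p$ is precisely the boundary arc itself, so no interior edge of $E$ can accumulate at $p$; thus $A_\pm\cap B_+=\emptyset$ and $A_\pm$ lies in the countable set $N\cup B_-$, and the rest of your argument goes through. (The paper's own proof likewise addresses only the $\partial_-\Delta$ terminus explicitly and is silent on $h\equiv a$ along $\partial_+^o\Delta$; but since you make an affirmative false claim, you should replace it with the above, or add a hypothesis excluding the degenerate case.)
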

\begin{proof}
	Let $E$ be the subset of $P\setminus \partial_- \Delta$ where $\nabla h$ is non-zero.  Let $N$ be the remainder of $P$, where $\nabla h$ vanishes.  For each $n \in N$ not lying in $\partial_- \Delta$, we examine the convergent expansion for $h$ near $n$.  The lead term must be a homogeneous harmonic polynomial\footnote{The $\text{Real()}$ function extracts the real part of a complex number.} $\text{Real } (az^k)$, for $a$ a nonzero complex number and $k \ge 2$.\footnote{We cannot have $k=0$ because then by analytic continuation, $h$ would be constant on all of $\Delta$; this would violate both assumptions $(a)$ and $(b)$ of the \lemLname{lem:harm_level_set:}.}  It is easy to show that in some ``nodal'' neighborhood $U_n$ of $n$, $P$ has the structure of a graph which consists of $2k \ge 4$ edges, each connecting $n$ to a point in $\partial U_n$.  Now consider a point $e \in E$.  For $\e$ a small positive value, let $W^\e$ be the set of points $x$ of $\Delta$ not lying in any nodal neighborhood $U_n$ and having $(\nabla h)(x) $ exceeding $\e$.  For sufficiently small $\e$, there is a connected component $C_e^\e$ of $E \cap W^\e$ containing $e$.  Applying the Implicit Function Theorem to $h$ at each point of $\overline{C_e^\e}$ and taking a finite subcover, we conclude that $C_e^\e$ is a continuous curve from $\partial W^\e$ to itself.  As we let $\e$ decrease, we obtain extensions of this curve.  For each end of the curve, one of two things happens.
	\begin{enumerate}
		\item Either at some value of $\e$ the curve touches a boundary of a nodal neighborhood $U_n$ for some $n$.  In this case we can use our analysis of $h$ in $U_n$ to demonstrate that the curve in this direction connects the point $e$ to the node $n$.
		\item Case $(i)$ never occurs as $\e$ goes to zero.  In case $(ii)$, we have that the curve terminates closer and closer to $\partial_- \Delta$.  By the compactness of $\partial \Delta$, it must have limit point(s) on $\partial \Delta$.  Can it have more than one?  Say it did, at $q_1,q_2$.  Then by assumption $(b)$ of \lemLname{lem:harm_level_set:}, we can draw a segment on $\Delta$ from $\partial_+^o \Delta$ to a point $q^*$ in $\partial_-^o \Delta$ between $q_1$ and $q_2$ with $h(q^*) \ne a$.  By construction, the curve $C_e^\e$ crosses this segment at an infinite sequence of distinct places as $\e$ goes to zero.  These intersections must accumulate somewhere on the segment.  But they cannot: not at $q^*$ because $h(q^*) \ne a$ and $h$ is continuous; not elsewhere on the segment by real analyticity of $h$.  So the curve $C_e^\e$ approaches a unique limit point as $\e$ goes to zero.
	\end{enumerate}
	We have shown that $P$ consists of interior nodes $N$, boundary nodes $B_-,B_+$, and a set $E$ which decomposes into continuous curves connecting these nodes.  Curves which connect $N \cup B_+$ to $N \cup B_+$ stay a positive distance from $\partial_- \Delta$ and so are real-analytic.
\end{proof}

\begin{lemma}\label{lem:harm_level_set:boundary_path} Let $h\in C^0(\Delta,\R)$ be harmonic and real-analytic on $\Delta \setminus \partial_-\Delta$.  Let $U$ be a union of some connected components of the level set $h=a$ which intersects $\partial_- \Delta$ in at most finitely many places.  Consider the set $Z$ defined by either $h \ge a$ in $U$ or $h \le a$ in $\Delta$.  Let $p$ be any point on $\partial Z$ in $\Delta^o$.  Then there is a continuous simple curve $\gamma:[-1,1] \to \partial Z$ which passes through $p$ at $\gamma(0)$ and intersects $\partial \Delta$ at two places, $\gamma(1)$ and $\gamma(-1)$.
\end{lemma}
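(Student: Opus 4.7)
The plan is to exploit the planar tree structure of $\partial Z$ provided by Lemmas \ref{lem:harm_level_set:nocycle} and \ref{lem:harm_level_set:graph}, then extend two branches emanating from $p$ until they reach $\partial \Delta$. First I would analyze the local picture at $p$. By Lemma \ref{lem:harm_level_set:graph}, in a small neighborhood of $p$ the level set $\{h=a\}$ is a star of $2j$ real-analytic arcs meeting at $p$, where $j=1$ at a regular point of $h$ and $j=k\ge 2$ at a critical point of order $k$. The alternating signs of $h-a$ around $p$ split these $2j$ local sectors into $j$ sectors with $h>a$ and $j$ sectors with $h<a$. Because all like-signed sectors touch $p$ and so lie in a single component of $\{h\ge a\}$ (respectively, of $\{h\le a\}$), whichever of the two super/sub-level sets contains $Z$ as a component, $Z$ must contain every like-signed sector at $p$, and every one of the $2j$ local arcs therefore lies in $\partial Z$. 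In particular $\partial Z$ has valence $2j\ge 2$ at $p$.

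Next I would pass to the global tree. Let $C$ be the connected component of $\partial Z$ in $\overline{\Delta}$ that contains $p$. By Lemma \ref{lem:harm_level_set:nocycle}(i), $C$ contains no Jordan curve, and by Lemma \ref{lem:harm_level_set:graph} it has the structure of a planar graph, so $C$ is a tree. Removing $p$ decomposes $C$ into $2j$ subtrees; I fix two of them, $C_-$ and $C_+$. The key sublemma is that $\overline{C_\pm}$ meets $\partial \Delta$. If it did not, then $\overline{C_\pm}\cup\{p\}$ would be a compact subset of $\Delta^o$ at positive distance from $\partial \Delta$; since the critical points of $h$ are isolated in $\Delta \setminus \partial_-\Delta$, this compact piece of the level set would contain only finitely many nodes and arcs and would be a finite tree. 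But a finite tree has at least two leaves, while here every interior node has valence $\ge 4$ by Lemma \ref{lem:harm_level_set:graph} and the only remaining leaf candidate is $p$ itself---a contradiction.

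Finally I would build, separately for $C_-$ and $C_+$, a continuous simple curve in $\overline{C_\pm}$ from $p$ to a point $q_\pm \in \partial \Delta$, and glue the two at $p$. In the tree $C_\pm$ I would take a simple path from $p$ toward a boundary accumulation point $q_\pm \in \overline{C_\pm} \cap \partial \Delta$: either $q_\pm$ is already a vertex of $C_\pm$ and the path is finite, or the path is an infinite simple ray. The hard part is to show that such a ray converges to a single point of $\partial \Delta$. Its accumulation set $L$ is nonempty, compact, and connected; $L$ cannot meet $\Delta^o$, since an isolated interior critical point cannot be accumulated by a simple ray drawn in its own local star; and $L \subset \{h=a\} \cap \partial \Delta$, which is finite because $U \cap \partial_-\Delta$ is finite by hypothesis while the intersections of the level set with $\partial_+\Delta$ and the boundary critical points there are finite by real-analyticity of $h$ on $\partial_+\Delta$. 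A connected subset of a finite set is a single point, so $L = \{q_\pm\}$ and the ray converges to $q_\pm$. Gluing the two resulting simple paths at $p$ and reparametrizing onto $[-1,1]$ produces $\gamma$; the two paths meet only at $p$ because $C_-$ and $C_+$ are disjoint, so $\gamma$ is simple. This convergence step, which uses the finiteness hypothesis in an essential way, is the central obstacle of the argument.
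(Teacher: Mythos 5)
Your proposal is correct in outline and follows essentially the same strategy as the paper: use the tree structure of the level set (from Lemmas \ref{lem:harm_level_set:nocycle} and \ref{lem:harm_level_set:graph}) to trace two non-self-intersecting branches of $\partial Z$ out of $p$, and then argue that each branch converges to a \emph{single} point of $\partial\Delta$. The implementations differ: the paper constructs the path iteratively by tracing $\partial Z$ inside an exhaustion $L_k = B(0,1-1/(2k))$, so by construction each partial endpoint lies on $\partial L_k$ and therefore the path automatically marches out to $\partial\Delta$; you instead split the tree at $p$ into $2j$ subtrees and argue abstractly that each branch must reach $\partial\Delta$ by a leaf/valence count. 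Both then finish with a ``the boundary limit set is a single point'' argument: the paper encircles a candidate limit $b^*$ and shows $\gamma$ cannot re-cross the circle infinitely often (real-analyticity in the interior, $h\neq a$ at the circle's boundary endpoints); you use connectedness of the accumulation set plus finiteness of the boundary trace.

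Two points in your convergence step deserve tightening. First, the claim that $L$ cannot meet $\Delta^o$ is only argued for critical points (``an isolated interior critical point cannot be accumulated by a simple ray in its own local star''), but you also need to exclude accumulation at a \emph{regular} interior point. This is true, and the cleanest argument is that if $z\in L\cap\Delta^o$, then since the local star at $z$ is finite and the ray is simple, the ray must eventually stay inside any small ball about $z$, forcing $L=\{z\}$ and the ray to terminate at $z$; but $z$ is then a node of valence $\geq 2$ (regular) or $\geq 4$ (critical), so a maximal simple ray can always be extended past $z$, contradiction. This should be stated, since as written the regular case is unaddressed. Second, ``$\{h=a\}\cap\partial\Delta$ is finite'' is slightly too strong a claim from the hypotheses: the finiteness assumption controls $U\cap\partial_-\Delta$, not the full level set on $\partial_-\Delta$. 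What you actually have, and what suffices, is that $L\subset \overline{C_{\pm}}\cap\partial\Delta\subset C\cap\partial\Delta$ where $C$ is the component of $\{h=a\}$ containing $p$, which lies in $U$; so $C\cap\partial_-\Delta\subset U\cap\partial_-\Delta$ is finite, and $C\cap\partial_+^o\Delta$ is finite by real-analyticity of $h$ on $\partial_+^o\Delta$ (assuming $h\not\equiv a$ there, which is the degenerate case also left implicit in the paper). These are refinements rather than gaps in the strategy.
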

\begin{proof}
	We define the curve $\gamma$ iteratively.  For $k=1,2,\ldots$, consider $Z$ restricted to the closed disc $L_k = B(0,1-1/(2k))$ centered on the origin.  Say that $L_{k_0}$ contains $p$.  Then for $k \ge k_0$, the set $L_k \cap \partial Z$ has a graph structure inherited from that of $P$.  We may trace $\partial Z$ from $p$ in either direction, following an edge to a new node in each step.  Operating in this way we can never encounter a node we've already been to, for that would imply a closed loop in the $h=a$ level set and would violate \lemLname{lem:harm_level_set:nocycle}.$(i)$.  Moreover, our operation must end after finitely many steps because there are only finitely many nodes in $L_k$.  (Indeed, nodes are zeroes of $\nabla h$, which is real analytic up to the boundary of $L_k$.)  In this way we can define a curve $\gamma_k:[-(1-1/(2k)),1-1/(2k)] \to \partial Z \cap L_k$ which is continuous and non-self-intersecting and has $\gamma_k(0) = p$.  Moreover, we can define such a curve so $\gamma_{k+1}$ extends $\gamma_k$.  Taking $\gamma$ to be the limit of such curves, we obtain a curve defined on $(-1,1)$.  Now consider the sequence $\gamma(1-1/(2k))$.  It must approach a point $b^*$ of $B$ arbitrarily closely.  Moreover, it cannot have two points of $B$ as limit points.  Indeed, construct a circle $C^*$ about $b^*$ cutting it off from the other points of $B$.  Then $\gamma$ cannot intersect $C^*$ infinitely many times, because these intersection points would have to accumulate and they can't (not in the interior of $\Delta$ by real analyticity of $h$, and not at $C^* \cap \partial \Delta$ because $h\ne a$ there).  We conclude that $\gamma(s)$ stays inside $C^*$ after sufficiently large $s$, and converges to $b^*$.  Similarly we can show that $\gamma(s)$ converges as $s$ goes to $-1$.
\end{proof}

\begin{lemma}\label{lem:harm_level_set:fullbound} Let $\gamma,p,Z$ be as in \lemLname{lem:harm_level_set:boundary_path}.  Additionally assume that \lemLname{lem:harm_level_set:nocycle}.$(ii)$ holds.  Then the curve $\gamma$ may be extended to a curve $\gamma_2$ which exhausts the component of $\partial Z$ containing $p$ and has at most one endpoint in $\partial_+ \Delta$.
\end{lemma}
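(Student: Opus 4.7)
The plan is to show that the component $C$ of $\partial Z$ containing $p$ is a tree graph, and then to extend $\gamma$ by splicing in detour walks at every interior node $\gamma$ visits.  By \lemLname{lem:harm_level_set:graph}, $C$ is a planar graph whose interior nodes have even valence at least $4$ and whose boundary nodes lie in $B_- \cup B_+$.  By \lemLname{lem:harm_level_set:nocycle}$(i)$, $C$ contains no Jordan curve, so it is a tree; its leaves are precisely the boundary nodes.

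The key structural observation is that \lemLname{lem:harm_level_set:nocycle}$(ii)$ forces $C$ to have at most one leaf on $\partial_+ \Delta$.  Indeed, any two $\partial_+$-leaves would be joined by the unique tree-path between them in $C$, yielding a curve in the level set $h=a$ with interior in $\Delta^o$ and endpoints on $\partial \Delta$, contradicting \lemLname{lem:harm_level_set:nocycle}$(ii)$.  Applied to $\gamma$ itself, which is exactly such a curve joining its two endpoints $\gamma(\pm 1)$, the same argument shows that at most one of $\gamma(\pm 1)$ lies in $\partial_+\Delta$.

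To construct $\gamma_2$, at each interior node $v$ that $\gamma$ visits I would insert a closed detour walk $\delta_v$ starting and ending at $v$ that traverses every edge in the subtrees rooted at $v$ via edges \emph{not} used by $\gamma$.  Because $C$ is a tree and $\gamma$ is simple, these subtrees are pairwise disjoint and disjoint from $\im \gamma$, so together they partition $C \setminus \im \gamma$.  Each $\delta_v$ is built recursively in DFS style: at each node entered, cycle through each further subtree in turn, descend into it, recurse, and return.  Splicing each $\delta_v$ into $\gamma$ at the (unique, by simplicity) parameter value where $\gamma$ passes through $v$ yields a continuous $\gamma_2$ whose image equals $C$ (since every edge is in $\gamma$ or in some $\delta_v$) and whose endpoints coincide with $\gamma(\pm 1)$, hence has at most one endpoint in $\partial_+ \Delta$.

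The main technical obstacle is to carry out the splicing when $C$ has infinitely many nodes and leaves accumulating on $\partial_-\Delta$, since \lemLname{lem:harm_level_set:graph} only guarantees finitely many nodes in each compact sub-disc $L_k = \overline{B(0, 1-1/(2k))} \cap \Delta$.  I would handle this by parametrizing the recursion so that each subtree's traversal is assigned a parameter interval shrinking geometrically with its depth and diameter in $\Delta$; a telescoping choice of lengths makes $\gamma_2$ continuous at accumulation points on $\partial_-\Delta$, where (as in the proof of \lemLname{lem:harm_level_set:boundary_path}) continuity follows from the real-analyticity of $h$ on $\Delta \setminus \partial_-\Delta$ combined with the maximum-principle control already used to rule out cycles.
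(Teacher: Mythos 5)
Your proof captures the same essential mechanism as the paper's — using Lemma~\ref{lem:harm_level_set:nocycle}$(ii)$ to rule out a second contact with $\partial_+\Delta$, and using the graph/tree structure from Lemma~\ref{lem:harm_level_set:graph} together with the no-cycle part $(i)$ — but your construction of $\gamma_2$ is substantially more explicit, and in one respect more careful, than the paper's. The paper applies Lemma~\ref{lem:harm_level_set:boundary_path} at every point of $C_Z\cap\Delta^o$ and then asserts tersely that by the no-cycle lemma ``they must join together to form a curve which does not self-intersect.'' If the tree $C_Z$ has an interior node of valence $\ge 4$ (which the main lemma explicitly allows), a single non-self-intersecting curve cannot exhaust $C_Z$; the different simple curves produced by Lemma~\ref{lem:harm_level_set:boundary_path} at a branching node will in general overlap in an X-pattern rather than concatenate. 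Your DFS-walk construction, which permits revisiting nodes, is the right fix: $\gamma_2$ is a closed-up walk covering every edge, whose two parameter endpoints coincide with those of $\gamma$, so the ``at most one endpoint on $\partial_+\Delta$'' conclusion survives. The one place you are more elaborate than the paper needs is the telescoping-parametrization step for possibly infinitely many nodes accumulating on $\partial_-\Delta$: the main Lemma~\ref{lem:harm_level_set:} subsequently shows by a separate counting argument that $P$ is in fact a \emph{finite} tree, so the paper never confronts this accumulation issue in Lemma~\ref{lem:harm_level_set:fullbound}; still, since this lemma is stated before finiteness is established, your caution is defensible. One small point to tighten: you should justify that the tree-path between two $\partial_+$-leaves (or the curve $\gamma$ itself) has interior in $\Delta^o$ before invoking Lemma~\ref{lem:harm_level_set:nocycle}$(ii)$; this follows because $B_+$ nodes have $\nabla h\ne 0$ and hence valence $1$, so any such point is a leaf, not an interior vertex of the path.
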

\begin{proof}
	Let $C_Z$ be the component of $\partial Z$ containing $C$.  Applying \lemLname{lem:harm_level_set:boundary_path} to each point of $C_Z \cap \Delta^o$, we get many curves lying in $\partial Z$.  Observing \lemLname{lem:harm_level_set:nocycle}, we see that they must join together to form a curve which does not self-intersect, touches $\partial_- \Delta$ at most once in each of the finitely many points of $B$, and touches $\partial_+ \Delta$ at most once.
\end{proof}

We may now marshall our supporting lemmas to prove the main analytic result of this section: \lemLname{lem:harm_level_set:}.

\begin{proof} \emph{(of Lemma \ref{lem:harm_level_set:})}
	We consider the component $P$ defined in the lemma.  There are several cases.
	\begin{enumerate}
		\item The component $P$ does not venture into the interior of the unit disc.  Then it is a closed arc of $\partial \Delta$, possibly degenerate.  If its intersection with $\partial_+^o \Delta$ is an arc of positive length, then by analytic continuation we can show that $h$ is constant on $\partial_+\Delta$; this contradicts assumptsion $(a)$.  Combining this observation with assumption $(b)$ of the \lemMLname{lem:harm_level_set:}, we see that $P$ must be a point in $\partial \Delta$.  It cannot be a point in $\partial_+^o \Delta$, because then if we study the expansion for $h$ at $P$, we see that the only way to achieve a single point level set is for $h$ to have $\nabla h$ non-zero and be perpendicular to $\partial \Delta$.  But that would then violate assumption $(a)$ of the \lemMLname{lem:harm_level_set:}.  So we conclude that if the component $P$ does not contain points of $\Delta^o$; it must consist of a single point of $\partial_- \Delta$.
		\item The alternative is that $P$ contains a point in $\Delta^o$.  By \lemLname{lem:harm_level_set:graph}, the set $P$ has a graph structure.  We demonstrate that it in fact is a finite tree graph.  First pick an edge point $e \in P \cap \Delta^o$ and apply \lemLname{lem:harm_level_set:fullbound}.  For $\e > 0$, we can consider the connected component $P_\e$ of $e$ in $P$ restricted to the shrunken unit disc $B(0,1-\e)$.  Because of the real analyticity of $h$, edges and nodes of $P_\e$ cannot accumulate; therefore, it has the structure of a finite graph.  Applying \lemLname{lem:harm_level_set:nocycle} we see that $P_\e$ does not have cycles, and so is a tree graph.  Moreover, by applying \lemLname{lem:harm_level_set:boundary_path} we are able to take $P_\e$ and augment it by extending each node on $\partial B(0,1-\e)$ and each edge exiting through $B(0,1-\e)$ by a path which reaches a point on $\partial \Delta$.  These paths do not intersect each other or $P_\e$, at peril of violating the first part of \lemLname{lem:harm_level_set:nocycle}.  Also, the augmented $P_\e$ touches $\partial_+ \Delta$ at most once, at peril of violating the second part of \lemLname{lem:harm_level_set:nocycle}.  In this way we see that $P_\e$ has been augmented by adding at most $m+1$ paths where $m$ is the finite number of times that $h$ attains $a$ on $\partial_- \Delta$.  Since $P_\e$ is a tree graph with interior node valence of at least $4$ (see \lemLname{lem:harm_level_set:graph}), this means that $P_\e$ has $n_N$ nodes and $n_E$ edges bounded like
			$$1 + 3n_N \le m+1 \qquad 1 + 4n_N \ge 2n_E.$$
			For $\e' < \e$, the graph $P_{\e'}$ extends the graph $P_\e$.  But the number of nodes and edges of $P_\e$ is uniformly bounded.  So for sufficiently small $\e$, the augmented graph of $P_\e$ has no nodes of $P$ on the augmenting paths.  We thus show that $P$ is a finite tree graph.  Moreover, it must touch $\partial_+ \Delta$ at most once.  This then forces $P$ to touch $\partial_- \Delta$ at least once.
	\end{enumerate}
	This completes our proof of \lemLname{lem:harm_level_set:}.
\end{proof}

With the \lemLname{lem:harm_level_set:} in hand, we may prove the main geometric result of this section.

\begin{proof} \emph{(of Lemma \label{lem:plane_cresc:intg})}
	Consider an Alt crescent $(X,\phi_-)$.  Define the function $h=F\circ X$.  It is harmonic (and therefore real-analytic) on $\Delta^o$ because $F$ has constant derivative and $X$ is harmonic.  We get that it is harmonic and real-analytic on $\partial_+^o \Delta$ because the Alt minimizer is real-analytic on the interior of the free thread and can be extended real-analytically across the boundary as a minimal surface (\thmLname{thm:thread:reg}).  If $h$ is constant on $\partial_+ \Delta$ then the free thread lies in a plane.  This means it has torsion $T_\gamma \equiv 0$.  If the free thread curvature $\kappa$ is nonzero, then we may look at the expansion of the surface at a non-branch point on the interior of the thread and show that the surface is locally planar.  By analytic continuation the whole Alt crescent is planar, contrary to assumption.  We conclude that if $h$ is constant on $\partial_+ \Delta$ then the free thread curvature is not positive; by \lemLname{lem:firstvar:poskappa} we must have $\kappa=0$.  This establishes conclusion $(iii)$ of the lemma we are proving.  Otherwise, we may assume that $h$ is nonconstant on $\partial_- \Delta$.

	Next we show that conclusion $(ii)$ of \lemLname{lem:harm_level_set:nocycle} holds.  If there were any path $\gamma$ in th level set $h=a$ beginning and ending in $\partial_+ \Delta$, with its interior lying in $\Delta^o$, then its image $X \circ \gamma$ is a curve lying in the plane $F=a$.  Let $U$ be the region of with nonempty interior defined by $\gamma$ and $\partial_+ \Delta$.  By \thmLname{thm:chull:tws}, the piece of surface $X|_U$ is planar.  By analytic continuation, the whole crescent is planar, contrary to assumption.  So we see that conclusion $(ii)$ of \lemLname{lem:harm_level_set:nocycle} holds.  It suffices to show that this condition holds, instead of showing that condition $(a)$ of \lemLname{lem:harm_level_set:} holds.  The reason this suffices is that the proof of \lemMLname{lem:harm_level_set:} only depends on the conclusion \lemLname{lem:harm_level_set:nocycle}.$(ii)$.

	Condition $(b)$ of the \lemMLname{lem:harm_level_set:} is already met by assumption $(a)$ of \lemLname{lem:plane_cresc:intg}.  We have thus confirmed that the function $f$ satisfies the conditions of \lemMLname{lem:harm_level_set:}.  Now we define $U$ to be the preimage $X^\n(W)$.  We would like to show that $U$ is a union of connected components of the level set $f^\n(a) = X^\n(V)$.  It suffices to show for each $q\in U$ that the connected component $C_q'$ of $q$ in $U$ is the same as the connected component $C_q$ of $q$ in the entire level set.

We have that 
$$C_q = \bigcap_S S$$
where $S$ are sets in $X^\n(V)$ which contain $q$ and are simultaneously open and closed (open-and-closed) in the $X^\n(V)$ topology.  Now consider the operations on subsets $S \subset X^\n(V)$,
\begin{eqnarray*}
\pi_1(S) &=& S \cap X^\n(W)\\
\pi_2(S) &=& S \cap X^\n(W \setminus \partial_V W).
\end{eqnarray*}
The first sends closed subsets to closed subsets as $W$ is compact; the second sends open subsets to open subsets, because
$$\pi_2(S)= S \cap X^\n(Q)$$
where $Q$ is any open set in $\R^3$ intersecting $V$ as $W \setminus \partial_V W$.  Because we assume that $X^\n(\partial U)$ is empty, $\pi_1$ and $\pi_2$ are actually the same operation, which we can call $\pi$.  The map $\pi$ sends sets open-and-closed subsets of $X^\n(V)$ to possibly smaller open-and-closed subsets of $X^\n(V)$.  Moreover, if $S$ contains $q$ then $\pi(S)$ will contain $q$.  We obtain
$$C_q = \bigcap_{S'} S' = C_q'$$
where $S'$ are open-and-closed subsets of $X^\n(W)$ which contain $q$.  This confirms that $C_q=C_q'$.  And so we know that the set $U$ defined above is indeed a union of connected components of the level set $f^\n(a)$.

Having verified that the conditions of the \lemMLname{lem:harm_level_set:} are all met, we may now employ its conclusions about $U$, which is a union of connected components of $X^\n(V)$.  They are exactly sufficient for our purposes.
\end{proof}
\section{Slice-wise parametrization}
\begin{proof} \emph{(of Lemma \ref{thm:slope:slicewise})}
	When the wire curve is planar, the Alt crescent is planar by Theorem \ref{thm:chull}.  This case is easy; in the remainder of the proof we assume that $\Gamma$ is nonplanar.  Let $\hat{s}$ be the extension of the arclength parameter of $\Gamma$ constantly along normal discs.  By the near-wire assumption, each crescent $X$ of the minimizer lies in the union of normal discs, $\dom \hat{s}$.  We have
		$$\im X \subset \dom \hat{s}.$$
		Pulling back the extended arclength parameter function $\hat{s}$ by $X$ decomposes the domain $\Delta$ of $X$ into connected level sets.  Only values $\hat{s} \in [s_0,s_1]$ occur.  The level sets for $\hat{s} \in (s_0,s_1)$ are continuous curves of positive length.  The level sets $\hat{s} = s_0,s_1$ are points $(1,0)$ or $(-1,0)$.        We prove this lemma by applying \lemLname{lem:plane_cresc:intg}.  For each $s \in \dom \Gamma$, we consider the normal disc $D(s)$.  This is a compact subset of a plane, and it intersects the plane in exactly one point, $\Gamma(s)$.  The Alt crescent $(X,\phi_-)$ is disjoint from the circle bounding $D(s)$ because it lies strictly within the tubular neighborhood $\Tub_R \Gamma$.  By \lemLname{lem:plane_cresc:intg}, the set $X^\n(D(s))$ is either\nopagebreak
\begin{enumerate}
\item a single point $q \in \partial_- \Delta$, or\nopagebreak           \item a connected set whose only component is a finite tree graph.  This graph can only touch $\partial_-\Delta$ at one point: $\phi_-^\n(s)$.  The properties $(a)$-$(c)$ listed under item $(ii)$ in \lemLname{lem:plane_cresc:intg} force the graph to be a segment connecting $\phi_-^\n(s)$ in $\partial_-\Delta$ to a point in $\partial_+ \Delta$.
\item a set which contains $\partial_+ \Delta$; moreover in this case we also have that the free thread curvature vanishes.  But that means that the entire free thread for this crescent lies in the normal disc $D(s_0)$.  This normal disc only intersects the wire at $\Gamma(s_0)$.  So we have $\Gamma(\phi_-((-1,0))) = \Gamma(\phi_-((1,0)))$ which violates the embeddedness of $\Gamma$.  We conclude that case $(iii)$ cannot occur.
\end{enumerate}
Thus we see that each level set contains a point of $\partial_- \Delta$.  The map $\phi_-$ gives a bijection between $\partial_- \Delta$ and $[s_0,s_1]$; thus we see that $\Delta$ decomposes into level sets of $X^\n(\hat{s})$ for $s \in [s_0,s_1]$.  We claim that item $(i)$ cannot occur for $q \in \partial_-^o \Delta$; indeed in that case we could decompose $\Delta \setminus \{q\}$ into two non-empty open sets $(X \circ \hat{s})^\n([s_0,s))$ and $(X \circ \hat{s})^\n((s,s_1])$.  But a disc minus a boundary point is connected!  So case $(i)$ can only occur for $q = (1,0), (-1,0)$.
Moreover, it must occur for each of these points; if the level set $(X \circ \hat{s})^\n(s_0)$ were a curve from $(-1,0)$ to $\partial_+^o \Delta$ then we would violate \lemLname{lem:harm_level_set:nocycle}.
\end{proof}
\section{Appendix}
\begin{wrapfigure}[10]{o}{0pt}
\picholder{2in}{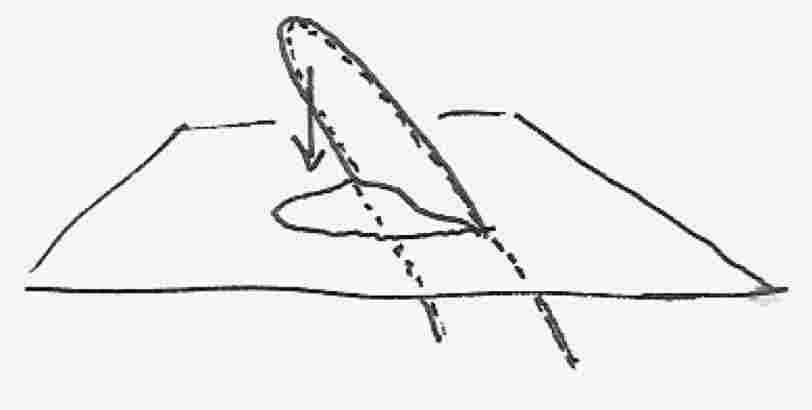}
\label{sec:firstvar}
\caption[An Alt competitor with $\kappa < 0$ may be improved]{An Alt competitor with $\kappa < 0$ may be improved.}
\label{fig:firstvar:negk}
\end{wrapfigure}

\subsection{Positivity of thread curvature}

\begin{theorem}\label{lem:firstvar:poskappa} If $M$ is an Alt minimizer for the problem $\sP(\Gamma,L)$, then it has free thread curvature $\kappa \ge 0.$
\end{theorem}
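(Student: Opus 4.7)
The plan is to proceed by contradiction. Suppose $M$ is an Alt minimizer with some crescent $(X,\phi_-)\in M$ whose free thread curvature $\kappa$ is strictly negative. Since $\kappa$ is a global Lagrange multiplier for $M$, it suffices to produce one competitor that beats $M$ in Dirichlet energy and still satisfies the length constraint. I will do this by cutting off a thin sliver of surface along a short arc of the free thread, which simultaneously decreases area and decreases thread length when $\kappa<0$, and then invoking Morrey's reparametrization (\lemLname{lem:intro:morrey}) to convert area savings into Dirichlet-energy savings.

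First I would pick a real-analytic, non-branch interior point $p$ on the free thread, which is available by the Hildebrandt--Dierkes--Lewy regularity theorem stated just before \lemLname{lem:intro:morrey}. Near $p$, the thread $\gamma(s)$ (arclength) satisfies $\gamma_{ss}=\kappa\nu$, with $\nu$ the outer side-normal lying in the tangent plane of $X$, and the surface extends as a smooth minimal graph across the thread. For small $\epsilon>0$, let $C_\epsilon$ be the chord joining $\gamma(-\epsilon)$ to $\gamma(\epsilon)$. Because $\kappa<0$ the arc curves into the crescent, so $C_\epsilon$ lies strictly on the surface side of the arc. Replace the arc of thread between $\gamma(\pm\epsilon)$ by $C_\epsilon$, and delete the thin sliver $S_\epsilon$ of surface bounded by the arc and the chord to form a new crescent $(X',\phi_-)$.

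Next I would verify the two key estimates by Taylor expansion in $s$. The sliver area is
\begin{equation*}
A(S_\epsilon) \;=\; \tfrac{2}{3}|\kappa|\,\epsilon^{3} + O(\epsilon^{4}),
\end{equation*}
while the thread shortens by
\begin{equation*}
\ell(\gamma|_{[-\epsilon,\epsilon]}) - \ell(C_\epsilon) \;=\; \tfrac{1}{3}\kappa^{2}\epsilon^{3} + O(\epsilon^{5}),
\end{equation*}
both strictly positive. So the modified Alt collection $M'=(M\setminus\{(X,\phi_-)\})\cup\{(X',\phi_-)\}$ has strictly smaller total area and strictly smaller thread length $\ell(M')<\ell(M)=L$; in particular $M'$ is admissible for $\sP(\Gamma,L)$. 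Applying Morrey's lemma to each crescent of $M'$ with error $\e<\tfrac{1}{6}|\kappa|\epsilon^{3}$, I obtain a competitor $\widetilde M'$ with
\begin{equation*}
D(\widetilde M') \le A(M') + \e < A(M) - \tfrac{2}{3}|\kappa|\epsilon^{3}+\e < A(M) = D(M),
\end{equation*}
where the final equality uses conformality of the minimizer. This contradicts minimality of $M$.

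The main obstacle is the regularity of $X'$ along the seam between the unmodified surface and the cut: I need $(X',\phi_-)$ to be a bona-fide thread-wire disc with $H^1$ Dirichlet energy and continuous boundary map of the correct monotonicity. Working at a non-branch real-analytic point $p$, the surface is locally a real-analytic graph over its tangent plane, so $X'$ can be parametrized on $\Delta$ by composing $X$ with a boundary-respecting homeomorphism of $\Delta$ that pushes $\partial_+\Delta$ inward by a smooth bump; the resulting map is $H^1$ and continuous, and the area and thread-length estimates above survive under this reparametrization. Everything else is routine once this local model is set up.
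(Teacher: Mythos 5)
Your argument is correct in substance but takes a genuinely different route from the paper. The paper's proof is a projection argument: at a non-branch thread point $p$, it takes the plane through $X(p)$ perpendicular to the outer side-normal $\nu$, translates it a small amount into the surface, and composes $X$ with the nearest-point projection onto the corresponding half-space, a $1$-Lipschitz map. Because $\kappa<0$, the thread dips back through that plane on either side of $p$, so the projection strictly decreases the Dirichlet energy and strictly shortens the free thread in one stroke, contradicting minimality directly --- no appeal to Morrey's reparametrization is needed. Your sliver-excision plus Morrey argument reaches the same contradiction with explicit cubic-in-$\epsilon$ estimates, at the cost of some extra machinery; the paper's route is cleaner because the projected map is automatically a competitor of smaller Dirichlet energy.

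One step in your write-up needs repair before it is airtight. The chord $C_\epsilon$ joining $\gamma(\pm\epsilon)$ is a segment in $\R^3$ and in general does not lie on the crescent, so ``the sliver of surface bounded by the arc and the chord'' is not a well-defined subregion of $\im X$. The cut must be made along a curve on the surface --- for instance the surface geodesic joining $\gamma(\pm\epsilon)$, which is exactly what your domain-level bump of $\partial_+\Delta$ produces. You do gesture at this, but your length and area estimates are computed for the Euclidean chord, so you should verify that the surface curve inherits the same leading-order behavior. It does: the geodesic curvature of the thread in the surface metric equals $\kappa$ (because $\gamma_{ss}$ lies in the tangent plane), and the intrinsic Taylor expansion gives the same $\tfrac{1}{3}\kappa^2\epsilon^3+O(\epsilon^4)$ length saving and $\tfrac{2}{3}|\kappa|\epsilon^3+O(\epsilon^4)$ area, with Gauss-curvature corrections pushed to higher order. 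With that line added, the argument closes.
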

\begin{proof}
	This is straightforward.  If the thread has negative curvature, we can find a point $p\in \partial_+^o \Delta$ which is not a branch point.  We may then pick a plane perpendicular to the side normal to the surface at $X(p)$ and translate the plane towards the surface a small amount.  Then we have a situation like the one shown in \figref{fig:firstvar:negk}.  Projecting the thread and surface onto that plane reduces the Dirichlet energy of the map $X$, and it also reduces the length of the free thread.  In this way we show that there is another Alt competitor with strictly less Dirichlet energy.  This contradicts the minimizing property of $M$.
\end{proof}

\ \\

 \ \\

\subsection{Convex Hull}\label{sec:chull}
\begin{theorem}\label{thm:chull} Let $M$ be an Alt minimizer for the thread problem $\sP(\Gamma,L)$.  Then for every crescent $(X,\phi_-)\in M$ lies in the convex hull of its supporting wire $\im \phi_-$.
\end{theorem}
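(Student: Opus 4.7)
My approach is the classical minimal-surface convex hull argument, adapted to handle the free thread boundary via the length constraint. Since $\CH(\im \phi_-)$ equals the intersection of all closed half-spaces containing $\im \phi_-$, it suffices to show that for every affine function $f$ on $\R^3$ with $|\nabla f|=1$ and every $c\in\R$ such that $\im \phi_- \subset \{f \le c\}$, the entire crescent satisfies $\im X \subset \{f \le c\}$. The plan is to argue by contradiction: assume the open set $U := \{p \in \Delta : f(X(p)) > c\}$ is non-empty and use the minimality of $M$ against a projected competitor.

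The competitor is $X' := \pi \circ X$, where $\pi(q) = q - \max(f(q)-c,0)\,\nabla f$ is the orthogonal projection onto the half-space $\{f \le c\}$. Since $\im\phi_- \subset \{f \le c\}$, the map $X'$ agrees with $X$ on $\partial_-\Delta$, and I pair $X'$ with the same $\phi_-$ to obtain a thread-wire disc $(X', \phi_-)$. Replacing $(X,\phi_-)$ in $M$ by this new pair yields a collection $M'$ that is automatically non-overlapping because the wire-side parametrization is unchanged. Because $\pi$ is $1$-Lipschitz, $X' \in H^1(\Delta^o,\R^3)\cap C^0(\Delta,\R^3)$, and $\ell(X'|_{\partial_+\Delta}) \le \ell(X|_{\partial_+\Delta})$; thus $\ell(M') \le \ell(M) = L$, so $M'$ is an admissible Alt competitor for $\sP(\Gamma,L)$.

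Next I would verify strict energy reduction $D(X') < D(X)$. The pointwise estimate $|\nabla X'|^2 \le |\nabla X|^2$ holds almost everywhere, with strict inequality precisely where $X(p) \in \{f > c\}$ and $\nabla(f \circ X)(p) \ne 0$. Now $f \circ X$ is harmonic on $\Delta^o$, strictly positive on $U$, and vanishes on $\partial U \cap \Delta^o$ by continuity, so it is nonconstant on every connected component of $U$. Real-analyticity of harmonic functions then ensures the critical set of $f\circ X$ has Lebesgue measure zero in $U$, so integrating yields $D(X') < D(X)$ and hence $D(M') < D(M)$, contradicting the minimality of $M$.

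The main obstacle is really only careful bookkeeping: one must confirm that composing with the Lipschitz projection $\pi$ preserves membership in $H^1 \cap C^0$, does not lengthen the free thread, and leaves the weak monotonicity and non-overlap conditions intact. All of these hold because $\phi_-$ is unchanged and $\pi$ is $1$-Lipschitz. With that routine verification in hand, the contradiction closes the argument and establishes $\im X \subset \CH(\im \phi_-)$.
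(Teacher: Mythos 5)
Your proof is correct, but it takes a genuinely different route from the paper's. The paper reduces to showing that the free thread lies in the convex hull of the supporting wire, invokes the classical convex-hull theorem for minimal surfaces to handle the interior, splits into cases $\kappa=0$ (thread is a straight segment) versus $\kappa>0$ (established separately as a consequence of minimality), and for $\kappa>0$ performs a local harmonic expansion of $h=F\circ X$ across the free boundary to show $h$ cannot attain a local maximum at a thread point---crucially using that the thread bends inward with curvature $\kappa>0$ to get a second-order increase along $\partial_+\Delta$ when $\nabla h$ is normal. Your projection argument avoids all of this machinery: it needs neither the classical convex-hull theorem as a black box, nor the sign of $\kappa$, nor the boundary regularity required to expand $h$ across $\partial_+^o\Delta$. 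Instead you exploit minimality directly against a Lipschitz-projected competitor, with the single extra observation (not present in the Plateau-problem version of this argument) that the $1$-Lipschitz projection can only shorten the free thread, so the length constraint $\ell(M')\le L$ is preserved. What the paper's approach buys is a slightly sharper conclusion---a strict inequality $F(X(p))<\max_{\partial_-\Delta}F\circ X$ at every free-thread point $p$, so the thread lies strictly inside the convex hull---which your argument does not deliver; what yours buys is a shorter, more self-contained, and more robust proof of the stated containment. One small imprecision worth cleaning up: you should say $f\circ X - c$ (not $f\circ X$) is strictly positive on $U$ and vanishes on $\partial U\cap\Delta^o$, and the nonconstancy of $f\circ X$ on $\Delta^o$ follows more cleanly from the observation that if $f\circ X\equiv c'$ with $c'>c$ then $\Gamma(\im\phi_-)\subset\{f=c'\}$, contradicting $\Gamma(\im\phi_-)\subset\{f\le c\}$.
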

\begin{proof}
	It will suffice to show that the thread curve lies in the convex hull of the wire curve.  Indeed, we have
	\begin{eqnarray}\label{eq:chull:implic}
		&& X(\partial_+ \Delta) \subset \text{Convex Hull }(\partial_- \Delta) \quad \text{implies}\\
	\nonumber &&	\qquad X(\Delta) \subset \text{Convex Hull }(\partial \Delta) = \text{Convex Hull }(\partial_- \Delta)
	\end{eqnarray}
	by the Classical convex hull theorem \cite{dhkw1}.

	When the free thread curvature $\kappa$ of $(X,\phi_-)$ is zero, the free thread is a straight segment.  We thus fulfill the condition of \eqref{eq:chull:implic} and our lemma follows.  Otherwise, we have by Lemma \ref{lem:firstvar:poskappa} that $\kappa > 0$.  Consider a point $p \in \partial_+^o \Delta$ parametrizing a thread point $X(p)$.  Let $F$ be an arbitrary linear function on $\R^3$.  By \eqref{eq:chull:implic}, proving our lemma reduces to showing
	\begin{equation}\label{eq:chull:ineq}F(X(p)) < \max_{\partial_- \Delta} F \circ X.
	\end{equation}
	We do this by showing that the harmonic function $h = F \circ X$ does not attain a local maximum at $p$.  To see this, extend $h$ across the boundary $\partial_+ \Delta$ near $p$.  It has an expansion whose lead term in a homogeneous harmonic polynomial $P(x,y)$.  If this polynomial is degree $2$ or higher, it is easy to find larger values of $h$ by moving into the interior of $\Delta$ from $p$.  If $P(x,y)$ is linear, then consider $(\nabla h)(p)$.  If this vector does not point normally out of $\Delta$, then we may move along a path in $\Delta$ from $p$ and find $h$ increasing to first order.  If this vector points normally out of $\Delta$, then we may use the fact that $\kappa > 0$ to show that as we move along $\partial \Delta$ away from $p$, we have $h$ increasing to second order.
\end{proof}

\subsection{Jointed pipe}

\begin{figure}
	\picholder{4in}{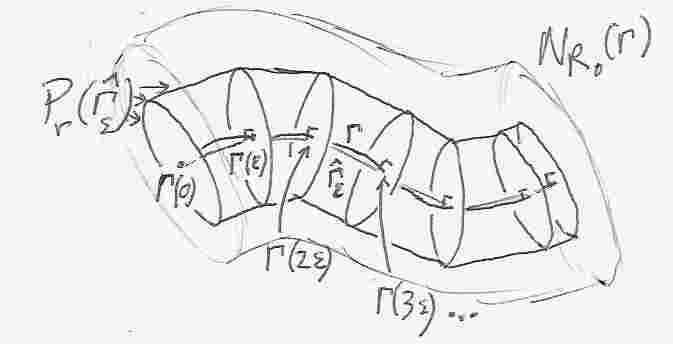}
	\mycaption{Jointed pipe.}
\label{fig:app:pipe}
\end{figure}

\begin{definition} To have a property in a \emph{finitely piecewise} manner will mean to have the property in finitely many pieces.  
\end{definition}
It will be technically useful to work with approximations of $\Gamma$ and its neighborhoods which have finitely piecewise properties.  Notation: a function $f(\e)$ is $o(\e)$ if $\limsup_{\e \to 0^+} \frac{|f(\e)|}{\e} = 0.$

\begin{lemma} \label{lem:pipe} Let $\Gamma$ be a $C^2$ curve with a simple $R_0$-normal neighborhood.  Then for small $\e>0$ we consider the finitely piecewise linear curve $\Ghat_\e$ with vertices $\Gamma(k\e), 0 \le k\e < \ell(\Gamma)$ and $\Gamma(\ell(\Gamma))$.  For $r < R_0 - o(\e)$, the set $P_r(\Ghat_\e)$ of points in $N_{R_0}(\Gamma)$ distance $r$ from $\Ghat_\e$ consists of a sequence of cylindrical surfaces joined along circles and portions of spheres, as shown in \figref{fig:app:pipe}.  We call this set a \emph{jointed pipe surface} of $\Gamma$.  There is an $\tilde{\e} = O(\e^2)>0$ so $P_r(\Ghat_\e)$ strictly encloses $N_{r-\tilde{\e}}(\Gamma)$ provided $r>\tilde{\e}$.
\end{lemma}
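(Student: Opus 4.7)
The plan is to identify the region enclosed by $P_r(\Ghat_\e)$ as the metric tubular neighborhood $N_r^*(\Ghat_\e) := \{p : d(p,\Ghat_\e) < r\}$, describe $P_r(\Ghat_\e) = \partial N_r^*(\Ghat_\e)$ explicitly as a union of cylinders, sphere pieces, and junction circles, and then compare to $N_{r-\tilde{\e}}(\Gamma)$ via a quadratic Hausdorff estimate obtained from $C^2$ Taylor expansion.

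First I would enumerate the segments $S_1,\ldots,S_n$ of $\Ghat_\e$ (each of length $\le \e$) meeting at the vertices $v_k = \Gamma(k\e)$. Around the relative interior of each segment $S_k$, the set of points at distance exactly $r$ from $\Ghat_\e$ lies on a round cylinder $\Sigma_k$ of radius $r$ about the axis $S_k$; this gives the cylindrical pieces. At an interior vertex $v_k$, the two cylinders $\Sigma_{k-1}$ and $\Sigma_k$ meet along a common circle lying in the bisecting plane at $v_k$ (on the ``concave'' side of the bend), while on the ``convex'' side a spherical cap of radius $r$ centered at $v_k$ fills in the outward angle; at the two endpoints, spherical half-caps close up the pipe. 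For $r < R_0 - o(\e)$, the simplicity of the $R_0$-normal neighborhood of $\Gamma$ (combined with the Hausdorff bound below) ensures these local pieces assemble globally into the claimed jointed pipe surface without extra self-intersections, and that the open region $N_r^*(\Ghat_\e)$ is exactly the solid bounded by $P_r(\Ghat_\e)$.

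Next I would establish the Hausdorff bound $d_H(\im\Gamma, \Ghat_\e) \le C\kmax\e^2 + o(\e^2)$. For $s \in [k\e,(k+1)\e]$, Taylor's theorem applied in arclength gives
$$\Gamma(s) - \bigl((1-t/\e)\Gamma(k\e) + (t/\e)\Gamma((k+1)\e)\bigr) = \tfrac{1}{2}t^2 \Gamma''(\xi_1) - \tfrac{1}{2}t\e\, \Gamma''(\xi_2), \quad t = s - k\e,$$
which is $O(\kmax \e^2)$ uniformly, so every arc point is within $O(\kmax\e^2)$ of the chord; conversely, every chord point lies within a comparable distance of $\Gamma$ by the same expansion at the midpoint. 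Set $\tilde{\e} := 2\, d_H(\im \Gamma,\Ghat_\e) = O(\e^2)$.

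Finally, the enclosure follows by the triangle inequality. Given $p \in N_{r-\tilde{\e}}(\Gamma)$, pick $q \in \im\Gamma$ with $|p-q| < r - \tilde{\e}$, and by the Hausdorff bound pick $q' \in \Ghat_\e$ with $|q-q'| \le \tilde{\e}/2$; then $d(p,\Ghat_\e) \le |p-q| + |q-q'| < r - \tilde{\e}/2 < r$, so $p$ lies strictly inside $N_r^*(\Ghat_\e)$, whose boundary is $P_r(\Ghat_\e)$. The hypothesis $r > \tilde{\e}$ makes the statement nontrivial, and $r < R_0 - o(\e)$ ensures $N_r^*(\Ghat_\e) \subset N_{R_0}(\Gamma)$ so that the pipe description above is valid.

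The main obstacle I anticipate is the local geometric bookkeeping at the vertices $v_k$: verifying that for $\e$ small the bisecting plane at $v_k$ cleanly separates the two adjacent cylinders (so their intersection is a single circle rather than a more complicated set), that the spherical cap on the convex side fits between neighboring cylinders without interfering with $\Sigma_{k-2}$ or $\Sigma_{k+1}$, and that the two end caps do not contact any interior piece. Each of these reduces to checking that the angle between consecutive segments is $O(\kmax\e)$ (from the $C^2$ estimate on $\Gamma$) and that $r$ is bounded away from $R_0$ by more than $O(\e)$, but it requires careful use of the simple-tubular-neighborhood hypothesis; everything else is direct Taylor expansion.
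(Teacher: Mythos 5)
Your proof takes essentially the same route as the paper's: describe the pipe surface locally as cylinders about segments plus sphere pieces at vertices, get a quadratic Hausdorff bound $d_H(\im\Gamma,\Ghat_\e)=O(\kmax\e^2)$ by Taylor expansion, and conclude the enclosure by the triangle inequality. The one place where the paper is more careful than your sketch is precisely the ``geometric bookkeeping at the vertices'' that you flag as the anticipated obstacle: the paper makes this rigorous by considering, at each vertex $\Gamma(s_k)$, the disc $E_k$ of radius $\rho=R_0-o(\e)$ bisecting the angle of $\Ghat_\e$, showing via Taylor's theorem that its unit normal differs from that of the corresponding normal disc $D_\rho(\Gamma,s_k)$ by only $\kmax\e^2+o(\e^2)$, so the $E_k$ are pairwise disjoint, and they chop $N_{\rho_2}(\Gamma)$ (for $\rho_2$ slightly smaller) into compartments each containing exactly one segment of $\Ghat_\e$. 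Within a single compartment the distance-$r$ level set is manifestly a cylinder plus sphere portions, and the compartmentalization is what guarantees the local pieces glue without unexpected self-intersections. So your proposal is correct in outline and matches the paper's strategy; the bisecting-disc compartmentalization is exactly the device you would want to discharge the assembly step you identified as the main difficulty.
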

\begin{proof}
	Let us notate the vertices of $\Ghat_\e$ as $\Gamma(s_0),\Gamma(s_1),\ldots \Gamma(s_m)$.  In $N_{R_0} \Gamma$, the normal discs $D_{R_0}(\Gamma,s_k)$ are disjoint.  An application of Taylor's theorem shows that the radius $\rho$ disc $E_k$ bisecting the angles of $\Ghat_\e$ at $s=s_k$ has unit normal vector within $\kmax \e^2 + o(\e^2)$ of $D_\rho(\Gamma,s_k$.  Moreover, if we choose $\rho = R_0 - o(\e)$, the discs $E_k$ will be disjoint.  For $\rho_2 = \rho - o(\e)$, the $E_k$ bound $m-1$ compartments in $N_{\rho_2}(\Gamma)$.  In each compartment $C_k$ lies exactly one line segment of $\Ghat_\e$.  It is then not hard to see that for $r< \rho_2$, the portion of $P_r(\Ghat_\e)$ in the compartment is a portion of the radius $r$ cylinder about $C_k \cap \im \Ghat_\e$ and possibly portions of the radius $r$ spheres about $\partial(C_k \cap \im \Ghat_\e)$.  If the first and last of the $D_k$ are not parallel to the corresponding normal discs of $\Gamma$, then there may be compartments at the endponts; in these compartments the level sets of distance to $\Ghat_\e$ are just portions of spheres.

	Let $\Ghat_\e$ be parameterized on the two segments abutting an interior vertex $\Gamma(s_k)$ so in the Frenet coordinates of $\Gamma$ at $s_k$, $\Ghat_\e(s) = (s-s_k,y(s),z(s)).$  Then by Taylor's theorem,
	$$|\Ghat_\e(s)-\Gamma(s)| \le \kappa_\Gamma(0)\left(\frac{s-s_k}{\e} \half \e^2 - \half (s-s_k)^2\right) + o(\e^2)$$
	Maximizing the lead term at $s= s_k \pm \e/2$ shows that $d(\im \Gamma,\im \Ghat_\e) \le \frac18 \kmax \e^2 + o(\e^2) := \tilde{\e}$.  The last claim of the lemma follows by the triangle inequality.
\end{proof}

\subsection{Generic wire}\label{sec:generic}
We say a wire $\Gamma$ is generic if:
\begin{enumerate}
	\item The wire $\Gamma$ is $C^4$.
\item The curvature $\kappa_\Gamma$ does not vanish.  Hence $\Gamma$ is a Frenet curve --- it has an orthogonal frame consisting of $\Gamma_s, \Gamma_{ss}$ and the binormal $\eta_\Gamma(s)$. 
	\item The curvature $\kappa_\Gamma$ is a Morse function.  In other words, whereever its first derivative vanishes, its second derivative does not.
	\item The torsion $T_\Gamma$ crosses zero transversely.  In other words, wherever it vanishes, its first derivative does not.
	\item The torsion $T_\Gamma$ does not vanish at any critical point of curvature $\kappa_\Gamma$.
\end{enumerate}

\begin {thebibliography}{AA} 
\begin{dummy}\label{sec:biblio}\end{dummy}

\bibitem{alt} Alt, H.W.  Die Existenz einer Minimalfl\"ache mit freiem Rand vorgeschriebener L\"ange. Arch. Ration. Mech. Anal. \textbf{51}, 304-320 (1973).

\bibitem{surfstab} Barbosa, J.L.; do Carmo, M. On the size of a stable minimal surface in $\R^3$. Am. J. Math. \textbf{98}, no 2., 515--528 (1976).

\bibitem{douglas} Douglas, J. Solution of the problem of Plateau. Trans. Am. Math. Soc. \textbf{36}, 363--321 (1931).

\bibitem{dhkw1} Dierkes, U., S. Hildebrandt, A. K\"uster, O. Wohlrab.  Minimal Surfaces I: Boundary value problems.  Grundlehren der mathematischen Wissenschaften 295.  Springer-Verlag: Berlin, 1992.

\bibitem{dhkw2} Dierkes, U., S. Hildebrandt, A. K\"uster, O. Wohlrab.  Minimal Surfaces II: Boundary regularity.  Grundlehren der mathematischen Wissenschaften 295.  Springer-Verlag: Berlin, 1992.

\bibitem{dhl} Dierkes, U., Hildebrandt, S., Lewy, H. On the analyticity of minimal surfaces at movable boundaries of prescribed length. J. Reine Angew. Math. \textbf{379}, 100-114 (1987).

\bibitem{dziuk3} Dziuk, G. On the boundary behavior of partially free minimal surfaces. Manuscr. Math. \textbf{35}, 105--123 (1981).

\bibitem{ecker} Ecker, K. Area-minimizing integral currents with movable boundary parts of prescribed mass. Analyse non lin\'eaire. Ann. Inst. H. Poincar\'e \textbf{6}, 261-293 (1989).

\bibitem{federer} Federer, H.  Geometric Measure Theory.  Die Grundlehren der mathematischen Wissenschaften.  Band 153.  Springer-Verlag: New York, 1969.

\bibitem{morrey1} Morrey, C.B. On the solutions of quasi-linear elliptic partial differential equations. Trans. Am. Math. Soc. \textbf{43}, 126--166 (1938).

\bibitem{morrey3} Morrey, C.B. The problem of Plateau on a Riemannian manifold. Ann. Math. (2) \textbf{49}, 807--851 (1948).

\bibitem{nietreg1} Nitsche, J.C.C. Minimal surfaces with movable boundaries.  Bull. Am. Math. Soc. \textbf{77}, 746-751 (1971).

\bibitem{nietreg2} Nitsche, J.C.C.  The regularity of minimal surfaces on the movable parts of their boundaries. Indiana Univ. Math. J. \textbf{21}, 505-513 (1971).

\bibitem{nietreg3} Nitsche, J.C.C.  On the boundary regularity of surfaces of least area in Euclidean space. Continuum Mechanics and Related Problems in Analysis, Moscow, 375-377 (1972).

\bibitem{pilz} Pilz, R.  On the thread problem for minimal surfaces.  Calc. Var. \textbf{5} , 117-136 (1997).

\bibitem{rado} Rad\'o, T. Contributions to the theory of minimal surfaces. Acta Sci. math. Univ. Szeged \textbf{6}, 1-20 (1932).

\bibitem{myurl} Stephens, B.K. www.bkstephens.net.

\end{thebibliography}

\end{document}